\documentclass[12pt]{article}
\usepackage[utf8]{inputenc}
\usepackage{amsmath}
\usepackage{amsfonts}
\usepackage{amssymb}
\usepackage{enumerate}
\usepackage{mathtools}
\usepackage{soul}
\usepackage{enumitem}
\usepackage{adjustbox}
\usepackage{indentfirst}
\usepackage{xcolor}
\usepackage{esint}
 \usepackage{authblk}
\usepackage{eucal}

\usepackage[normalem]{ulem}

\renewcommand{\sf}[1]{\mathsf{#1}}

\renewcommand{\bar}[1]{\overline{#1}}
\renewcommand{\tilde}{\widetilde}
\renewcommand{\hat}{\widehat}
\newcommand{\pt}{\partial}

\DeclareMathOperator{\vol}{vol}
\DeclareMathOperator{\Vol}{Vol}

\DeclareMathOperator{\Ric}{\sf{Ric}}

\newcommand{\dd}[1]{\ensuremath{\operatorname{d}\!{#1}}}

\DeclareMathOperator{\supp}{supp}
\DeclareMathOperator{\Tan}{Tan}
\DeclareMathOperator{\id}{id}

\mathchardef\mhyphen="2D

\usepackage{amsthm}
\usepackage{aliascnt}
\usepackage{hyperref}
\usepackage{cleveref}

\theoremstyle{plain}
\newtheorem{theorem}{Theorem}[section]

\theoremstyle{definition}

 \newaliascnt{lemma}{theorem} 
 \newaliascnt{proposition}{theorem} 
 \newaliascnt{corollary}{theorem} 
 \newaliascnt{remark}{theorem} 
 \newaliascnt{example}{theorem} 

 \newtheorem{definition}[theorem]{Definition}
 \newtheorem{proposition}[proposition]{Proposition}
 \newtheorem{corollary}[corollary]{Corollary}
 \newtheorem{lemma}[lemma]{Lemma}
 \newtheorem{example}[example]{Example}
 
 \theoremstyle{remark}
 \newtheorem{remark}[remark]{Remark}
 
 \aliascntresetthe{corollary}
 \aliascntresetthe{proposition}
 \aliascntresetthe{lemma}
 \aliascntresetthe{remark}
 \aliascntresetthe{example}

 \usepackage{titlesec}
 \titleformat{\section}{\large\bfseries}{\thesection.}{0.5em}{}

\titleformat{\subsection}
  {\normalfont\normalsize\bfseries}  
  {\thesubsection.}                   
  {0.5em}                             
  {}                                  

    \usepackage[
      paper=a4paper,
      top=1.2in,
      bottom=1.2in,
      left=1.2in,
      right=1.2in
    ]{geometry}

\usepackage{setspace}
\title{\large\bfseries
DIMENSION BOUND OF SINGULAR SET OF ONE-PHASE FREE BOUNDARY PROBLEMS IN SPACES WITH TWO-SIDED RICCI BOUND}

\author{Kai-Hsiang Wang}
\affil{Faculty of Mathematics, Technion - Israel Institute of Technology\\
khwang2025@campus.technion.ac.il}
\date{\today}

\begin{document}
\maketitle
\vspace{-1cm}
\begin{abstract}
For one-phase free boundary problems in $\mathbb{R}^n$, the Hausdorff dimension of the singular set of the free boundary can be bounded by $n-k^*$, where $k^*$ is the minimal dimension such that there exist nonlinear $1$-homogeneous global minimizers to the one-phase problem on $\mathbb{R}^{k^*}$.
From \cite{CJK04, DJ09, JS15}, it is known that $k^*\in \{5,6,7\}$, although the exact value is still an open problem.
\par
In \cite{CZZ22}, the authors extended this study to singular spaces by considering the one-phase problem in noncollapsed $\mathsf{RCD}(K,N)$ spaces, which, in a synthetic sense, have Riemannian Ricci curvature bounded from below by $K$ and the dimension bounded from above by $N$.
With singularities arising possibly from the ambient space and from the free boundary structure, they showed that the singular set of the free boundary has the sharp Hausdorff dimension bound $N-3$.
\par
In this article, we show that for noncollapsed limits of $n$-dimensional manifolds with stronger two-sided Ricci curvature bounds, the Hausdorff dimension bound can be improved to $n-5$, which is sharp in this context.

\end{abstract}
\tableofcontents

\setlength{\parskip}{0.5em}

\section{Introduction and Main Results}

Let $(X,d,\mu)$ be a suitable metric measure space and $\Omega\subset X$ a bounded open domain.
The one-phase Bernoulli free boundary problem is concerned with minimizers of the Alt--Caffarelli functional:
\begin{equation}\label{1-phase_Ber}
    E(v,\Omega):= \int_\Omega |\nabla v|^2 + Q^2 1_{v>0} \dd \mu,
\end{equation}
among $v\in W^{1,2}(\Omega)$ with $v-v_0\in W^{1,2}_0(\Omega)$ for some given nonnegative $v_0\in W^{1,2}(\Omega)$ and strictly positive $Q \in C^{\alpha}(\Omega)$.
By considering the positive part, we may assume that a minimizer $u$ is nonnegative, and hence the name ``one-phase".
In the case when $\Omega$ is not bounded, we require that $u\in W^{1,2}_{\text{loc}}(\Omega)$ minimizes $E(\cdot, \Sigma)$ over any bounded subdomains $\Sigma\subset \Omega$ with its boundary data.
In particular, when $\Omega=X$, we say that $u$ is a global minimizer.
Since the boundary of the positive part, $\pt \{u>0\}\cap \Omega$, is determined by $u$ itself, it is called the ``free boundary" in this context.
Formally, $u$ solves the following boundary value problem with the free boundary:
\begin{equation}\label{eq:PDE}
    \begin{cases}
        \Delta u = 0& \text{on $\{u>0\}\cap \Omega$};\\
        |\nabla u|=Q& \text{on $\pt \{u>0\}\cap \Omega$};\\
        u \geq 0 &\text{on $\Omega$};\\
        u=v_0 & \text{on $\pt \Omega$}.
    \end{cases}
\end{equation}

Fundamental questions involve the regularity of minimizers and the associated free boundary, which can be further divided into a \textit{regular part} and a \textit{singular part}.
Hence, one studies how smooth the regular part is, and how small the singular part is.
\par
When $X$ is the Euclidean space with the canonical metric measure structure, this free boundary problem has been intensively studied in the literature. 
The pioneer work \cite{AC81} established the existence and Lipschitz continuity of minimizers $u$, and showed that $u$ essentially solves the boundary value problem \eqref{eq:PDE}.
For the free boundary, they proved that it is a $C^{1,\alpha}$-manifold outside a relatively closed set $\mathcal{S}(u)$ with zero Hausdorff $(n-1)$-measure.
$\mathcal{S}(u)$ is the singular set in this context.
\par
More systematic study of the singular set in this setting begins with \cite{Wei99}, who introduced the Weiss monotonicity formula (see Theorem \ref{thm:orginal_weiss}) and confirmed the conjecture in \cite{AC81} that blowups of minimizers are $1$-homogeneous.
Together with Federer's dimension reduction argument, the Hausdorff dimension bound on the singular set was improved to $\dim_H (\mathcal{S}(u))\leq n - k^*$, where:
\begin{definition}\label{def:crit_dim}
    The \emph{critical dimension} $k^*\in \mathbb{N}$ is the smallest dimension such that $\mathbb{R}^{k^*}$ admits a nonlinear $1$-homogeneous global minimizer of the one-phase problem \eqref{1-phase_Ber} with constant $Q$.
\end{definition}
\noindent That is to say, for $n<k^*$, any nontrivial $1$-homogeneous global minimizer  $u$ on $\mathbb{R}^n$ with constant $Q$ is exactly
\begin{equation}\label{eq:linear_sol}
    h(x)=Q(x\cdot v)^+
\end{equation}
for some unit vector $v$, and we call such solutions \emph{linear solutions}. 
It can easily be shown that $k^*$ is independent of $Q$.
With the works \cite{CJK04, JS15, DJ09}, we know $k^* \in \{5,6,7\}$, although the exact value is still unknown. 
\par
In \cite{CZZ22}, the study of this free boundary problem is extended to non-smooth spaces. The ambient spaces are now the so-called $\mathsf{RCD}(K,N)$ spaces, which are metric measure spaces that have Riemannian Ricci curvature bounded from below by $K$, and the dimension bounded from above by $N$ in a synthetic sense.
They also assume that the space is noncollapsed in volume, which amounts to saying that the reference measure is the $N$-dimensional Hausdorff measure $\mathcal{H}^N$. 
Among their results, they recovered most of the regularity results in \cite{AC81} for minimizers in this non-smooth setting.
As for the free boundary, notice a subtlety here that the singularities can now come from either the free boundary structure itself or from the background space.
To facilitate our discussion, we introduce some notations.
\par
\textbf{Notations}\label{intro:notations}: Given a noncollapsed $\mathsf{RCD}(K,N)$ space $(X,d,\mathcal{H}^N)$, let $\mathcal{R}(X)$ denote the regular set of $X$; its complement is the singular set, denoted by $\mathcal{S}(X)$ (see Definition \ref{def:ric_lim_reg_sing}).
The regular and singular sets of the free boundary $\pt \{u>0\}\cap \Omega$ are denoted respectively by $\mathcal{R}(u)$ and $\mathcal{S}(u)$ (see Definition \ref{def:RCD_FBP_reg_sing}). 
We also denote by $\mathcal{S}^k(X)$ and $\mathcal{S}^k(u)$ respectively the $k$-singular stratum of $X$ and $\pt \{u>0\}\cap \Omega$ (see Definition \ref{def:ric_lim_strata} and \ref{def:strata_both_asp}).
\par
In this situation of possible mixed singularities in the free boundary, \cite[Corollary 1.11]{CZZ22} (also see Theorem \ref{thm:RCD_free_bdy_reg}) proved that 
\begin{equation}\label{eq:intro_RCD_FBP_sing_sharp_bd}
    \dim_H \mathcal{S}(u)\leq N-3,
\end{equation}
and this dimension bound is sharp in this non-smooth setting.
The same dimension bound was proved in \cite{MS25} for the singular set of perimeter minimizing sets in noncollapsed $\mathsf{RCD}$ spaces.
These bounds should be compared with $ \dim_H \mathcal{S}(X)\leq N-2$ from \cite{DG18} about the singular set of the background $\mathsf{RCD}$ space $X$ itself (assuming $X$ has no boundary).
In addition, \cite{CZZ22} also showed a Reifenberg-type result that outside a subset of $\mathcal{S}(u)$, the free boundary has a $C^{\alpha}$ manifold structure
A similar result was proved in \cite{MS25} for perimeter minimizing sets, and both are parallel to \cite{KM21} about the $C^\alpha$ manifold structure outside a subset of $\mathcal{S}(X)$.

\par
Motivated by \cite{CF24} about perimeter minimizing sets in noncollapsed limit spaces with stronger two-sided bounds on the Ricci curvature (recall that $\mathsf{RCD}$ spaces only have Ricci lower bounds), we consider the one-phase problem in such spaces. 
By definition, these are pointed metric spaces $(X, d, x)$ arising as pointed Gromov--Hausdorff limits of sequences of pointed complete Riemannian manifolds $(M^n_j , d_j , x_j )$ of some fixed dimension $n$, where
$d_j$ denotes the Riemannian distance, satisfying
\begin{equation}\label{intro:2sided_Ric_lim}
    |\Ric_{M^n_j}|\leq n-1 \quad \text{and}\quad \Vol(B_1(x_j))\geq v>0
\end{equation}
for some constant $v$.
Naturally, $X$ is equipped with the $n$-dimensional Hausdorff measure $\mathcal{H}^n$.
We refer to \cite{And89, And90, CC96,CC97,CN15} about studies of such limit spaces, and point out that they fall within the framework of noncollapsed $\mathsf{RCD}$ spaces (see \cite{Gig15}).
In particular, all fundamental results in \cite{CZZ22} are valid in our case.
We also mention that with the stronger curvature assumption in \eqref{intro:2sided_Ric_lim}, it is proved in \cite{And90} and \cite{CN15} that $S(X)$ is a closed set and has a better bound $\dim_H \mathcal{S}(X)\leq n-4$.

\par

\subsection{Main Results}
We show that under the stronger assumption of two-sided bounds on the Ricci curvature, the dimension bound of the singular set of the one-phase problem in the non-smooth setting can be improved: 
\begin{theorem}\label{thm:dim_bound}
    Let $X$ be an $n$-dimensional noncollapsed limit space with two-sided bounds on the Ricci curvature, as defined by \eqref{intro:2sided_Ric_lim}.
    Let $u$ be a minimizer of the one-phase problem \eqref{1-phase_Ber} on some bounded domain $\Omega\subset X$.
    Then we have $\mathcal{S}(u)=\mathcal{S}^{n-5}(u)$, with the \hyperref[intro:notations]{\textbf{Notations}} above.
    In particular, we have
    \begin{equation}
        \dim_{H}\mathcal{S}(u)\leq n-5.
    \end{equation}
Furthermore, this dimension bound is sharp.
\end{theorem}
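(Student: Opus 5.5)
The plan is to combine the stratification framework of \cite{CZZ22} with the codimension-four regularity of two-sided Ricci limits from \cite{And90, CN15}. Concretely, I will show that any point $x\in\mathcal{S}^{k}(u)\setminus\mathcal{S}^{k-1}(u)$ with $k\ge n-4$ is a regular free boundary point. The bound $\dim_H\mathcal{S}^{n-5}(u)\le n-5$ is then the standard Federer-type estimate for strata, already available in \cite{CZZ22}.

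The first step is to understand blow-ups. At a free boundary point $x$, take a tangent cone $C(Z)$ of $X$ at $x$ together with a blow-up limit $u_\infty$ of $u$. By the Weiss monotonicity formula in the $\mathsf{RCD}$ setting (as in \cite{CZZ22}), $u_\infty$ is a $1$-homogeneous global minimizer on the metric cone. Since $X$ is a two-sided Ricci limit, so are its tangent cones. If $x\notin\mathcal{S}^{n-5}(u)$, some such tangent pair splits off $\mathbb{R}^{n-4}$, meaning the joint splitting of space and function. The cone then has the form $\mathbb{R}^{n-4}\times C(Y)$, with $C(Y)$ a $4$-dimensional noncollapsed cone that is itself a two-sided Ricci limit. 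By \cite{CN15}, the singular set of such a cone has codimension $4$, so $C(Y)=\mathbb{R}^4/\Gamma$ with $\Gamma\subset O(4)$ finite acting freely on $S^3$. In the $\mathcal{S}^{k}$ with $k>n-4$ cases, the cone splits more, and the factor $\mathbb{R}^{n-k}$ (dimension at most $3$) must be Euclidean because codimension $\le 3$ singularities are excluded.

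The key step is to show that $u_\infty$ is linear and $\Gamma$ is trivial. I would lift $u_\infty$ to $\mathbb{R}^{n-4}\times\mathbb{R}^4$ as a $\Gamma$-invariant function. Local minimality lifts because the quotient map is a local isometry away from the origin, and the origin has zero capacity in dimension $4$. Away from the vertex the lift is therefore a $1$-homogeneous minimizer, so its splitting factor together with a $\le 4$-dimensional section is a homogeneous minimizer in $\mathbb{R}^4$, or in fewer dimensions. Since $4<5\le k^*$, the lift is linear, $Q(x\cdot v)^+$. A linear function on $\mathbb{R}^n$ is $\Gamma$-invariant only if $v$ is fixed by $\Gamma$. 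If $\Gamma$ is nontrivial, $v$ can lie only in the $\mathbb{R}^{n-4}$ factor, since the action on $\mathbb{R}^4$ is free. In that case, however, $u_\infty$ has an $(n-3)$-dimensional translation symmetry up to this point. I expect this to be the main obstacle: ruling out a flat free boundary $\{x\cdot v=0\}$ passing through the orbifold singular line $\mathbb{R}^{n-4}\times\{0\}$. My first approach is to argue through the density or Weiss energy. The Weiss density of $u_\infty$ at the origin equals $\frac{1}{|\Gamma|}$ times the half-space density of the regular case. The free boundary regularity of \cite{CZZ22} (the $\varepsilon$-regularity, Theorem \ref{thm:RCD_free_bdy_reg}) requires the blow-up to be a half-space solution on a Euclidean tangent cone. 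So I would show directly that $\Gamma$ must be trivial, using upper semicontinuity of the Weiss density along the free boundary. At nearby points, where $X$ is regular in codimension $4$, the densities are the full half-space value, which exceeds $\frac{1}{|\Gamma|}$ times it. This contradicts upper semicontinuity at $x$. Hence $\Gamma=\{e\}$, the tangent cone is $\mathbb{R}^n$, $u_\infty$ is linear, and $x\in\mathcal{R}(u)$.

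For sharpness, I would rely on the fact that $k^*\ge 5$ is unknown to be attained exactly. Instead I would construct an example using a singular background, $X=\mathbb{R}^{n-4}\times \mathbb{R}^4/\mathbb{Z}_2$ (a limit of Eguchi--Hanson spaces, which are Ricci-flat). I would take $u=Q(x_1)^+$, with $x_1$ a coordinate in $\mathbb{R}^{n-4}$. Its free boundary $\{x_1=0\}$ contains the singular set $\{x_1=0\}\times\mathbb{R}^{n-5}\times\{0\}$ of dimension $n-5$, whose points have non-Euclidean tangents. These points are therefore singular in $\mathcal{S}(u)$, giving $\dim_H\mathcal{S}(u)=n-5$. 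Minimality of $u$ follows from lifting to $\mathbb{R}^n$ and using the minimality of half-plane solutions there.
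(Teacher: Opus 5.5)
Your reduction has the same skeleton as the paper's: an $(n-4)$-symmetric blowup lives on $\mathbb{R}^{n-4}\times C(S^3/\Gamma)$, you then need rigidity for minimizers on $C(S^3/\Gamma)$ with the tip on the free boundary, and sharpness comes from the Eguchi--Hanson blowdown. However, the rigidity step has a genuine gap.

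The lift $\tilde u\circ\pi$ to $\mathbb{R}^4$ (or to $\mathbb{R}^{n-4}\times\mathbb{R}^4$) has two properties only. It is a minimizer against $\Gamma$-symmetric competitors, and it is a minimizer on small balls of $\mathbb{R}^4\setminus\{0\}$. The vanishing capacity of the origin lets you ignore the point itself, but minimality is not a local property. A competitor on $B_R(0)$ need not be $\Gamma$-symmetric, and it cannot be pushed down to $\mathbb{R}^4/\Gamma$. Averaging over $\Gamma$ replaces the positivity set by the union of its translates, so the functional need not decrease under symmetrization. Consequently the classification $4<5\le k^*$, which concerns genuine global minimizers on $\mathbb{R}^4$, cannot be invoked.

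Knowing that the lift is a homogeneous solution whose free boundary is smooth away from $0$ does not give linearity either. The Alt--Caffarelli cone in $\mathbb{R}^3$ has these properties and is not a minimizer \cite{CJK04}. What you need is stability under variations near the vertex, and only $\Gamma$-symmetric variations are available. This is the missing idea and the core of the paper:
\begin{itemize}
\item The Caffarelli--Jerison--Kenig inequality $\int_{\partial\{u>0\}}Hf^2\,d\sigma\le\int_{\{u>0\}}|\nabla f|^2\,dx$ remains valid for $\Gamma$-symmetric $f$, because the competitor $u-\epsilon F$ built from symmetric data is itself symmetric.
\item The Jerison--Savin strict subsolution $w^{1/3}$ of \cite{JS15}, with $w$ built from the eigenvalues of $D^2u$, is automatically $\Gamma$-symmetric.
\end{itemize}
Hence a nonlinear homogeneous $\Gamma$-symmetric minimizer would be unstable under symmetric variations. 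Only after this do you get linearity, and then triviality of $\Gamma$.

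There are two further problems. First, blowups of $u$ at $x\in X$ are not known to be $1$-homogeneous. The Weiss formula on $\mathsf{RCD}$ cones is centered at the tip, so you need a second blowup at the vertex. The paper does this after restricting to the section on $C(S^3/\Gamma)$.

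Second, the case you single out as the main obstacle, with $v$ in the $\mathbb{R}^{n-4}$ factor, cannot occur. An $(n-4)$-symmetric blowup is invariant along $\mathbb{R}^{n-4}$, so $v\in\mathbb{R}^4$, and a unit vector of $\mathbb{R}^4$ fixed by $\Gamma$ forces $\Gamma$ to be trivial by freeness. This matters, because the density argument you propose for that case is false. There is no monotonicity formula centered at non-vertex points, and the density is not upper semicontinuous across ambient singularities. Your own sharpness example shows this: at the points $(0,y,0)$ of the free boundary $\{x_1=0\}$ the Weiss density is half the half-space value, while at nearby regular free boundary points it is the full value. If your argument were valid, it would rule out that example.

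The sharpness construction itself is correct and is essentially the paper's.
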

\begin{remark}[The behavior of $\pt \{u>0\}$ in $\mathcal{R}(X)$]\label{rmk:FB_reg_in_R(X)}
    By \cite{And90,CC96}, $\mathcal{R}(X)$ is open and has a $C^{1,\beta}$ manifold structure, for any $\beta\in (0,1)$.
    Hence the arguments from \cite{AC81, Wei99} in the Euclidean case can be adapted to show that $\pt \{u>0\}\cap \mathcal{R}(X)$ is a $C^{1,\gamma}$ hypersurface in $\mathcal{R}(X)$, for some $\gamma$, outside of a closed set of Hausdorff dimension at most $n-k^*$, where $k^*$ is the critical dimension.
    See Proposition \ref{prop:FB_reg_in_R(X)}, and compare this to \cite[Remark 1.1]{CF24}.
\end{remark}
The sharpness of Theorem \ref{thm:dim_bound} is actually due to potential singularities coming from the ambient space (see Example \ref{ex:dim_bound_sharp}), which is the same reason behind the sharpness of \cite[Corollary 1.11]{CZZ22} and \cite[Theorem 1]{CF24}.
Therefore, it cannot be improved despite possible improvements in the critical dimension.
\par
The proof of Theorem \ref{thm:dim_bound} is similar to that of Theorem 1 in \cite{CF24}, where they proved the same dimension bound for the singular set of perimeter minimizing sets in such limit spaces.
It is done by analyzing the blowups of $u$, which are global minimizers on the corresponding tangent cone of $X$ at the blowup point, together with the following rigidity statement:
\begin{theorem}\label{thm:rigidity}
Let $\Gamma$ be a discrete group of isometries on the unit sphere $S^3$ acting freely.
    Let $C(S^3/\Gamma)$ be the metric cone over $S^3/\Gamma$, and let $p$ be its tip.
    If $u$ is a global minimizer of the one-phase problem \eqref{1-phase_Ber} on $C(S^3/\Gamma)$ with constant $Q$ such that $p \in \pt \{u>0\}$, then $\Gamma$ is trivial, and $u$ is a linear solution on $C(S^3)\cong \mathbb{R}^4$ as in \eqref{eq:linear_sol}.
\end{theorem}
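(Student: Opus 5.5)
The strategy follows the approach of \cite{CF24} for perimeter minimizers: reduce to a homogeneous minimizer by blowing up at the tip, then use the smooth Euclidean picture away from the tip together with a topological argument on $S^3/\Gamma$.

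\textbf{Step 1: reduction to a homogeneous minimizer.} Since $C(S^3/\Gamma)$ is a noncollapsed $\mathsf{RCD}(0,4)$ cone, the Weiss monotonicity formula and compactness results of \cite{CZZ22} apply. Blowing up $u$ at the tip $p$ gives a global minimizer $u_0$ on the tangent cone at $p$, which is again $C(S^3/\Gamma)$. This $u_0$ is $1$-homogeneous and nontrivial, and $p\in \pt\{u_0>0\}$ by nondegeneracy. I first prove the statement for $u_0$ and then return to $u$ in Step 4.

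\textbf{Step 2: structure of $u_0$ away from the tip.} Away from $p$, the cone is locally isometric to $\mathbb{R}^4$. Any point $q\neq p$ of $\pt\{u_0>0\}$ has, after blowing up, a $1$-homogeneous minimizer in $\mathbb{R}^4$. Since $4<k^*$, that blowup is linear, so $\pt\{u_0>0\}\setminus\{p\}$ is a smooth hypersurface and $u_0$ is smooth up to it. Write $u_0=r\,\phi(\theta)$ with $\theta\in S^3/\Gamma$. Then $\Omega^+:=\{\phi>0\}$ is a smooth domain in $S^3/\Gamma$, $\phi$ solves $\Delta_{S^3/\Gamma}\phi+3\phi=0$ in $\Omega^+$ with $\phi=0$ and $|\nabla\phi|=Q$ on $\pt\Omega^+$. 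In particular $\lambda_1(\Omega^+)=3$.

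\textbf{Step 3: lift and force triviality.} Let $\pi:S^3\to S^3/\Gamma$ be the covering and set $\tilde\phi=\phi\circ\pi$, giving a $\Gamma$-invariant, $1$-homogeneous function $\tilde u_0$ on $\mathbb{R}^4$ that solves \eqref{eq:PDE} classically away from the origin. The main obstacle is that $\tilde u_0$ need not be a minimizer on $\mathbb{R}^4$, because minimality does not obviously lift through the cover. What I would try first is to show directly that $\Omega^+$ is a spherical cap of the quotient.

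The argument goes as follows. The set $\pi^{-1}(\Omega^+)\subset S^3$ is a smooth $\Gamma$-invariant domain on whose components $\lambda_1=3$, with the overdetermined boundary condition $|\nabla \tilde\phi|=Q$. The key tool is stability: minimality of $u_0$ gives the second variation inequality (stability of the free boundary) on the cone, and this holds on the quotient $S^3/\Gamma$. I would combine this stability with the Jerison--Savin-type inequality $\sum_{\pt}H\ge 0$ to deduce that $\pt\Omega^+$ is totally geodesic, following the $\mathbb{R}^4$ classification argument of \cite{JS15}. The Jerison--Savin argument uses a test function built from $|\nabla^2 u|$, which is local, so it should go through on the smooth flat cone minus the tip once the cutoff near $p$ is handled. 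The cutoff is fine because a point has zero capacity in dimension $4$.

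Once $\pt\Omega^+$ is totally geodesic, $\{u_0>0\}$ is a half-space in the universal cover, namely a component of the complement of a hyperplane cone. A $\Gamma$-invariant set of this kind must have its hyperplane $\Gamma$-invariant, and so must each half-space. But a nontrivial free action of $\Gamma$ on $S^3$ cannot preserve a great $S^2$ together with a hemisphere. Such an element would fix the pole of the hemisphere: the hemisphere's centre is determined by the half-space, so an isometry preserving the half-space fixes its unit normal and hence has a fixed point on $S^3$. This contradicts freeness, so $\Gamma$ is trivial and $u_0$ is linear.

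\textbf{Step 4: conclude for $u$.} With $\Gamma$ trivial, $u$ is a global minimizer on $\mathbb{R}^4$, and its blowup at $p$ is linear. By the Weiss monotonicity formula, its blowdown is also $1$-homogeneous, and hence linear because $4<k^*$. Both the blowup and the blowdown therefore have the Weiss density of a half-space. Monotonicity then forces $W$ to be constant in $r$, so $u$ is $1$-homogeneous and equal to its blowup, which is a linear solution as in \eqref{eq:linear_sol}.
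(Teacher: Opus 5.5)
Your proposal is correct and is essentially the paper's argument. Working with stability directly on the quotient cone is equivalent to the paper's device of lifting $u_0$ to a $\Gamma$-symmetric function on $\mathbb{R}^4$ that minimizes among $\Gamma$-symmetric competitors (Lemma~\ref{lem:corresp_sym_min_to_min_on_cone}, via $lE(\tilde u,U)=E(\tilde u\circ\pi,\pi^{-1}(U))$); this is how the paper disposes of the ``main obstacle'' you flag, after which the Jerison--Savin subsolution $w^{1/3}$ built from the Hessian eigenvalues is $\Gamma$-invariant, the half-space/fixed-pole argument kills $\Gamma$, and the Liouville step (your Weiss-density argument, the paper's Corollary~\ref{cor:global_min_linear}) finishes. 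One correction: the delicate point in turning the degree $-\tfrac13$ subsolution into an admissible variation is not the tip, whose capacity is indeed zero, but scale invariance, since the Hardy-type threshold $\gamma\ge(\tfrac n2-1-\mu)^2$ holds with equality here; following \cite{JS15}, this is handled by passing to the first eigenvalue problem on the link and using an oscillating radial test function supported in an annulus (Propositions~\ref{prop:unstab_sym_sphere_var} and~\ref{prop:unstab_sym_homoge_var}), so the tip never enters.
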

To prove Theorem \ref{thm:rigidity}, we first modify the symmetry tools developed in \cite{CF24} to transform the problem into a Euclidean symmetric setting.
After that, we modify the arguments in \cite{CJK04,JS15} to make them applicable to the symmetric setting.
The connection between this theorem and Theorem \ref{thm:dim_bound} is that the tangent cones of $X$ at any $x\in \mathcal{S}(u)\backslash\mathcal{S}^{n-5}(u)$ are exactly those metric cones in this theorem with some extra Euclidean factors, thanks to \cite{And90, CN15}, and the classical classification of manifolds with constant sectional curvature (see Theorem \ref{thm:twosided_ric_n-4_rig}).
\par
Although the dimension bound in the previous Theorem \ref{thm:dim_bound} is sharp, one may wonder if Theorem \ref{thm:rigidity} is true for higher-dimensional spheres $S^{n-1}$ with $n\leq k^*-1$, where $k^*$ is the critical dimension.
This is somehow connected to our current knowledge and proofs about $k^*$. 
From \cite{CJK04}, a sufficient condition for instability is the existence of strict subsolutions to the linearized equation of \eqref{eq:PDE} (also see Proposition 2.1, 2.2 in \cite{JS15}, and Proposition \ref{prop:unstab_sym_homoge_var}).
Then in \cite{JS15}, they proved $k^*\geq 5$ by constructing such strict subsolutions explicitly on $\mathbb{R}^3$ (which recovered the result of \cite{CJK04}) and $\mathbb{R}^4$ (which is new).
The construction used $u$ as input and hence can be generalized to symmetric versions with respect to $\Gamma$, provided $u$ is also symmetric. 
Therefore, it is possible to upgrade Theorem \ref{thm:rigidity} by our argument if the construction of such subsolutions can be generalized to higher dimensions.

\subsection{Structure of the Article}
    In Section \ref{sec:prelim}, we provide preliminaries regarding the one-phase problem and Ricci limit spaces and introduce the singular strata in various contexts. 
    In Section \ref{sec:stab_under_sym_var}, we introduce the symmetry tools and prove several necessary conditions of stability under symmetric variations.
    In Section \ref{sec:proof_main}, we prove the main results.

\subsection*{Acknowledgment}
The research leading to these results is part of a project that has received funding from the European Research Council (ERC) under the European Union's Horizon 2020 research and innovation programme (grant agreement No 101001677).

\section{Preliminaries}\label{sec:prelim}
Throughout this article, a \emph{metric measure space} is a triple $(X,d,\mu)$, where $(X,d)$ is a complete separable metric space, and the reference measure $\mu$ is a Radon measure on $X$ such that $\supp \mu=X$.
Within our framework, all metric spaces involved are assumed to be proper.
Unless specified, a domain in a metric space is a nonempty open set.
We write $B_r(x)$ for the open ball centered at $x\in X$ of radius $r>0$.
Given $A\subset X$, we let $1_A$ denote its characteristic function and let $B_r(A)$ denote its $r$-tubular open neighborhood.
We denote by $\mathcal{H}^n$ the $n$-dimensional Hausdorff measure, which will be the reference measure $\mu$ in most of our considerations, while the dimension $n$ depends on context.
We denote by $\dim_H$ the Hausdorff dimension.
\par
Here we recall the definition of metric cones (see e.g.\@ \cite{BBI01}):
\begin{definition}\label{def:metric_cone}
    Let $(X,d)$ be a metric space.
    The metric cone over $X$, denoted by $C(X)$, is the topological space $\left([0,\infty)\times X\right)/\left(\{0\}\times X\right)$ equipped with the metric
    \begin{equation*}
        d_{C(X)}^2((r_1, x_1), (r_2, x_2))=r_1^2+r_2^2-2r_1r_2\cos(\min(d_X(x_1, x_2),\pi)).
    \end{equation*}
    The point $[\{0\}\times X]\in C(X)$ is called the tip of the metric cone.
\end{definition}
One can check that when $(X,g_X)$ is a complete Riemannian manifold, this definition agrees with the Riemannian distance from the warped product metric tensor $dr^2+r^2g_X$.

\subsection{Ricci Limit Space}
This subsection is adapted from \cite{CF24}.
\par
The starting point of the theory of Ricci limit spaces is the notion of (pointed) Gromov--Hausdorff convergence.
See e.g.\@ \cite{BBI01, Vil09} for an overview.
\begin{definition}[Pointed Gromov--Hausdorff convergence]\label{def:pGH}
     A sequence of pointed metric spaces $(X_j , d_j , x_j )$ is said to converge in the pointed Gromov--Hausdorff (pGH) topology to a pointed metric space $(X, d, x)$ if there exists a separable metric space $(Z, d_Z )$ and isometric embeddings $i_j : X_j \to  Z$ and $i: X \to Z$ such that the following holds:
     For any $\epsilon>0$ and $R>0$, there exists $\bar{j}\in \mathbb{N}$ such that for every $j \geq  \bar{j}$, we have
 \begin{equation*}
        \begin{cases}
             d_Z(i(x), i_j(x_j))<\epsilon,\\
             i(B^{X}_R(x))\subset B^Z_\epsilon \left(i_j(B^{X_j}_{R}(x_j))\right)\quad \text{and} \quad 
             i_j(B^{X_j}_R(x_j))\subset B^Z_\epsilon \left(i(B^{X}_{R}(x))\right).
        \end{cases}
\end{equation*}
     We denote the pointed Gromov--Hausdorff convergence by $X_j\xrightarrow{\text{pGH}}X$ and say that $Z$ is the space realizing the convergence.
\end{definition}

We now define the spaces that we consider in our theorems.
\begin{definition}[Noncollapsed limits of manifolds with lower/two-sided bounds on the Ricci curvature]\label{def:ric_lim}
     Let $(M_j , d_j , x_j )$ be a sequence of pointed Riemannian manifolds of fixed dimension $n \geq 2$ such that
     \begin{equation}\label{eq:ric_lower_bd}
         \Ric_{M_j}\geq -(n-1).
     \end{equation}
     Suppose there exists a pointed metric space $(X, d, x)$ such that $M_j\xrightarrow{\text{pGH}} X$.
     Then we say that $X$ is a \emph{Ricci limit space}.
     If condition \eqref{eq:ric_lower_bd} is strengthened to
    \begin{equation*}
        |\Ric_{M_j}|\leq n-1,
    \end{equation*}
    we say that $X$ is a \emph{limit of manifolds with two-sided bounds on the Ricci curvature}.
    Moreover, in either case, if
    \begin{equation}\label{eq:noncollapsed}
        \vol(B_1(x_j))\geq v>0
    \end{equation}
    for some constant $v$, we say that $X$ is \emph{noncollapsed}.
\end{definition}

Recall that by Bishop--Gromov's volume comparison and Gromov's precompactness theorem, assumption \eqref{eq:ric_lower_bd} guarantees that any collection of such manifolds admits a converging sequence in the pGH-topology.
This is one of the motivations to study Ricci limit spaces.
\par
Ricci limit spaces were studied extensively in \cite{CC97, CC00a, CC00b}.
In particular, \cite[Theorem 5.9]{CC97} showed that the noncollapsed assumption \eqref{eq:noncollapsed} forces the Hausdorff dimension of the
limit space and that of the approximating sequence to be the same.
Notice that the curvature bounds above can be relaxed to any real numbers of the same sign by rescaling the metrics.
\par

Let us recall the definition of tangent spaces in the setting of metric spaces.
They describe the infinitesimal behavior of such spaces.

\begin{definition}[Tangent space of a metric space at a point]\label{def:tang_space}
    Let $(X, d)$ be a metric space and let $x \in X$.
    We define the space of tangent spaces at $x$, denoted by $\Tan(X, x)$, to be the set of all pointed metric spaces $(Y , d_Y , y)$ such that 
    \begin{equation*}
        (X, d/r_j , x) \xrightarrow{\text{pGH}} (Y,d_Y,y)
    \end{equation*}
    for some sequence $r_j\in (0,1)$ with  $r_j \to 0$.
\end{definition}
In the case of Ricci limit spaces, by the same precompactness theorem above, the set of tangent spaces is always non-empty. Moreover, \cite[Theorem 5.2]{CC97} showed that for noncollapsed Ricci limit spaces, all tangent spaces are metric cones, and the point $y$ is the cone tip.
\par
In the terminology of geometric analysis, these metric cones are $0$-symmetric objects and can be further classified based on the amount of symmetries they have, in the sense of how many Euclidean factors they split off.
This motivates the following stratification of Ricci limit spaces.

\begin{definition}[Singular Strata of Ricci limit spaces]\label{def:ric_lim_strata}
    Let $X$ be a noncollapsed Ricci limit space.
    For $k \in \mathbb{N}_0$, the $k$-singular stratum $\mathcal{S}^k(X)$ collects those $x\in X$ such that any $(Y,d_Y,y)\in \Tan(X,x)$ splits off at most $k$ Euclidean factors.
    That is to say, any $(Y,d_Y,y)\in \Tan(X,x)$ cannot be isometric to $\mathbb{R}^{k+1}\times C(Z)$ for any metric cone $C(Z)$.
\end{definition}

By definition, we clearly have the inclusions
\begin{equation*}
    \mathcal{S}^0(X)\subset \mathcal{S}^1(X)\subset \cdots \subset \mathcal{S}^n(X)=X,
\end{equation*}
when $X$ is $n$-dimensional.
For those points achieving the maximal symmetry, we define
\begin{definition}\label{def:ric_lim_reg_sing}
    The regular set $\mathcal{R}(X)$ collects those $x\in X$ such that 
    \begin{equation*}
        \Tan(X,x)=\{(\mathbb{R}^n,d_{\text{euc}},0)\}.
    \end{equation*}
    The singular set is defined as the complement, $\mathcal{S}(X):=X\backslash\mathcal{R}(X)$.
\end{definition}

Here, we collect some key results about noncollapsed Ricci limit spaces.
By the rigidity of volume comparison and \cite{colding1997ricci}, we have $x\in \mathcal{R}(X)$ if and only if
\begin{equation*}
    (\mathbb{R}^n,d_{\text{euc}},0)\in \Tan(X,x).
\end{equation*}
Hence we have  $\mathcal{S}(X)=\mathcal{S}^{n-1}(X)$.
Another rigidity result \cite[Theorem 6.2]{CC97} showed that 
\begin{equation}\label{eq:ric_lim_strata_n-2=n-1}
    \mathcal{S}^{n-1}(X)\backslash\mathcal{S}^{n-2}(X)=\emptyset.
\end{equation}
On the other hand, using Federer's dimension reduction, it was proved in \cite[Theorem 4.7]{CC97} that
\begin{equation*}
    \dim_H \mathcal{S}^k(X)\leq k.
\end{equation*}
Combining these, they concluded the dimension bound \cite[Theorem 6.1]{CC97}: 
\begin{equation}\label{eq:Ric_sing_dim_bd}
    \dim_H\mathcal{S}(X)\leq n-2.
\end{equation}
\par
With the stronger two-sided bounds on the Ricci curvature, the result above can be strengthened:
\begin{theorem}[{\cite[Theorem 5.12]{CN15}}]\label{thm:dim_bd_twosided_ricci}
    Let $(X, d, x)$ be a noncollapsed limit of manifolds with two-sided Ricci curvature bounds of dimension $n \geq 2$.
    Then 
    \begin{equation*}
        \mathcal{S}(X)\backslash\mathcal{S}^{n-4}(X)=\emptyset.
    \end{equation*}
    In particular, we have $\dim_H \mathcal{S}(X)\leq n-4$ (\cite[Theorem 1.4]{CN15}).
\end{theorem}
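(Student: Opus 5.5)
This theorem is quoted from \cite[Theorem 5.12]{CN15}, so the plan below follows the structure of the Cheeger--Naber argument, taking the results recalled above as given. We must show that no point $x\in\mathcal{S}(X)$ has a tangent cone splitting off $n-3$ Euclidean factors. In other words, no $(Y,d_Y,y)\in\Tan(X,x)$ is isometric to $\mathbb{R}^{n-3}\times C(Z)$ unless $Y\cong\mathbb{R}^n$. The inclusion $\mathcal{S}^{n-2}(X)\subset\mathcal{S}^{n-3}(X)\cup(\mathcal{S}^{n-2}\backslash\mathcal{S}^{n-3})$ together with \eqref{eq:ric_lim_strata_n-2=n-1} reduces this to two codimension cases. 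We must rule out tangent cones $\mathbb{R}^{n-2}\times C(S^1_\beta)$ with $\beta<2\pi$, and $\mathbb{R}^{n-3}\times C(Z^2)$ with $C(Z^2)$ not isometric to $\mathbb{R}^3$. The dimension estimate then follows from $\dim_H\mathcal{S}^{n-4}(X)\le n-4$ \cite[Theorem 4.7]{CC97}.

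The codimension-$3$ case should be the easy one. A metric cone $C(Z^2)$ arising as a noncollapsed limit with Ricci lower bound has $Z^2$ a $2$-dimensional Alexandrov-type space with curvature $\ge 1$ and no boundary. If $Y=\mathbb{R}^{n-3}\times C(Z^2)$, a nearby point $y'\neq y$ in the $C(Z^2)$ factor has tangent cone $\mathbb{R}^{n-2}\times C(S^1_\beta)$. Hence once codimension $2$ is excluded, $Z^2$ is a smooth surface, and a smooth Ricci-flat cone of dimension $3$ is flat. Since $Z^2$ is then a spherical space form of dimension $2$, hence $S^2$ or $\mathbb{RP}^2$, and $C(\mathbb{RP}^2)$ is not a manifold limit by \eqref{eq:ric_lim_strata_n-2=n-1}-type orientation/volume arguments, we get $Y=\mathbb{R}^n$. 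Here I would use the two-sided bound through the fact that regular parts of limits are $C^{1,\beta}$ and Ricci-flat, by \cite{And90}.

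The heart of the proof, and the main obstacle, is excluding codimension-$2$ cones $\mathbb{R}^{n-2}\times C(S^1_\beta)$. Volume considerations alone cannot do this, since such cones occur under Ricci lower bounds alone. The plan follows Cheeger--Naber. First, using the $\epsilon$-regularity/quantitative splitting, show that if $B_2(x_j)\subset M_j$ is GH-close to $\mathbb{R}^{n-2}\times C(S^1_\beta)$, there are harmonic almost-splitting maps $u:B_2\to\mathbb{R}^{n-2}$ with Hessian small in $L^2$. Second, the key new estimate is a slicing theorem: for most $s\in\mathbb{R}^{n-2}$, the slice $u^{-1}(s)$ is, at every scale and every point, close to a $2$-dimensional cone. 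Here the two-sided Ricci bound enters through the transformation lemma, which improves almost-splitting maps scale by scale after composing with a lower-triangular matrix, using the Bochner formula $\tfrac12\Delta|\nabla u|^2=|\mathrm{Hess}\,u|^2+\Ric(\nabla u,\nabla u)$ with Ricci bounded on both sides. Third, on such a good slice, apply Gauss--Bonnet. The total curvature of the slice is controlled by $\int|\Ric|+|\mathrm{Hess}\,u|^2$ terms, which are small. Meanwhile, a cone angle $\beta<2\pi$ forces a definite amount $2\pi-\beta$ of concentrated curvature. This contradiction shows $\beta=2\pi$. I expect the slicing/transformation step, which requires uniform control of $u$ over all scales down to the curvature scale, to be the genuinely hard part. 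The Gauss--Bonnet contradiction itself is routine once it is in place.
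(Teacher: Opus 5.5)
Since the paper only quotes this result from \cite{CN15}, your outline can only be measured against the Cheeger--Naber argument. Your overall architecture is right. You reduce to excluding $\mathbb{R}^{n-2}\times C(S^1_\beta)$ and $\mathbb{R}^{n-3}\times C(Z^2)$. For the latter, you apply the codimension-$2$ result and Anderson's $\epsilon$-regularity on the tangent cone itself, which is again a noncollapsed limit with $|\Ric|\to 0$; you should say this explicitly. That yields $Z=S^2/\Gamma$. However, two steps fail as written.

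\textbf{First gap: excluding $\mathbb{RP}^2$.} The exclusion of $\mathbb{R}^{n-3}\times C(\mathbb{RP}^2)$ does not follow from anything like \eqref{eq:ric_lim_strata_n-2=n-1}. Its volume ratio at the tip is $1/2$, which noncollapsing allows. The $M_j$ are not assumed orientable, so the non-orientability of the regular part $\mathbb{R}^{n-3}\times(0,\infty)\times\mathbb{RP}^2$ contradicts nothing by itself. What is needed is a topological argument that genuinely uses the two-sided bound:
\begin{enumerate}
\item By \cite{And90}, the $M_j$ converge in $C^{1,\alpha}$ on the regular annular region $B_1(0^{n-3})\times\mathbb{RP}^2\times(1,2)$.
\item There, a harmonic $(n-3)$-splitting map $u_j$ is $C^1$-close to the projection.
\item For a regular value $s$, cut the level set $u_j^{-1}(s)$ along the smooth hypersurface corresponding to $\{r=3/2\}$. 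The result is a compact $3$-manifold $W$ with $\partial W\cong\mathbb{RP}^2$.
\item This is impossible, because $\chi(\partial W)=2\chi(W)$ is even while $\chi(\mathbb{RP}^2)=1$.
\end{enumerate}

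\textbf{Second gap: the Gauss--Bonnet step in codimension $2$.} Your assertion that the total curvature of a good slice is controlled by $\int|\Ric|+|\mathrm{Hess}\,u|^2$ is unjustified. By the Gauss equation, the curvature of $u^{-1}(s)$ contains the ambient sectional curvature of its tangent plane. For $n\ge 4$, this equals $\tfrac12\mathrm{Scal}-\sum_a\Ric(e_a,e_a)+\sum_{a<b}\sec(e_a,e_b)$ with $e_a$ normal. The last sum is not controlled by Ricci, and a two-sided Ricci bound gives no a priori integral control of the full curvature tensor. So you cannot get slicewise smallness of $\int K$.

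The slicing theorem has to be fed into a blow-up instead:
\begin{enumerate}
\item On a good slice, pick a point $x_j$ whose regularity (e.g.\ harmonic) scale $r_j$ is almost minimal along the slice, and rescale by $r_j$.
\item Because $A_{x,r}\circ u_j$ remains $\epsilon_j$-splitting at every scale, the limit is isometric to $\mathbb{R}^{n-2}\times S$.
\item The minimality of $r_j$ along the slice makes every point of the limiting fibre regular. Hence $S$ is a smooth, complete, Ricci-flat (i.e.\ flat) surface with quadratic area growth.
\item Therefore $S\cong\mathbb{R}^2$; this is where Gauss--Bonnet, or the classification of flat surfaces, enters. This contradicts the normalization of the scale at $x_j$.
\end{enumerate}

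With these two mechanisms supplied, your reduction goes through.
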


As for the next lower stratum, we have the following structure theorem, which is a key ingredient for the proof of Theorem \ref{thm:dim_bound}.
\begin{theorem}[{\cite[Theorem 1.16]{CJN21}}]\label{thm:twosided_ric_n-4_rig}
    Let $(X, d, x)$ be a noncollapsed limit of manifolds with two-sided bounds on the Ricci curvature of dimension $n\geq 2$.
    Then for any $x \in X\backslash\mathcal{S}^{n-5}(X)$, there exists a tangent space at $x$ which is isometric to $\mathbb{R}^{n-4}\times C(S^3/\Gamma)$, where $\Gamma\subset O(4)$ is a discrete group acting freely.
\end{theorem}
\begin{proof}
    The proof is already given in \cite[Theorem 2.8]{CF24} and included here for completeness.
    Let $x\in X\backslash\mathcal{S}^{n-5}(X)$.
Then, there exists a tangent space to $X$ at $x$ of the form $\mathbb{R}^{n-4}\times C(Y)$ for some metric space $Y$.
By \cite{CN15} and \cite{And90}, it follows that $Y$ is a $3$-dimensional smooth Riemannian manifold and that the Ricci curvature of $C(Y)$ outside of the tip vanishes.
This implies that $Y$ has constant sectional curvature equal to $1$, so that (see e.g.\@ \cite[Subsection 10.2]{Pet06}) it is a quotient $S^3/ \Gamma$, where $\Gamma\subset O(4)$ is a discrete group acting freely.
\end{proof}
Finally, we mention the following regularity result about $\mathcal{R}(X)$ for such spaces:
\begin{theorem}[\cite{And90, CC96}]\label{thm:twosided_ricci_R(X)_holder}
    Let $(X, d, x)$ be a noncollapsed limit of manifolds with two-sided bounds on the Ricci curvature of dimension $n\geq 2$.
    Then for every $\beta\in (0,1)$, $\mathcal{R}(X)$ is an open $C^{1,\beta}$ Riemannian manifold.
\end{theorem}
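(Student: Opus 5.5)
The approach is Anderson's $\epsilon$-regularity theorem combined with Colding's volume convergence: the two-sided Ricci bound gives $C^{1,\beta}$ control of the metric in harmonic coordinates wherever the volume ratio is almost Euclidean.

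\emph{Step 1: regular points have almost-Euclidean volume ratio.} Let $x\in\mathcal{R}(X)$. Since $(\mathbb{R}^n,d_{\text{euc}},0)\in\Tan(X,x)$, volume convergence \cite{colding1997ricci} applied to the blow-up sequence gives $\mathcal{H}^n(B_r(x))\geq(1-\delta)\omega_n r^n$ for some small $r>0$, where $\delta>0$ is any prescribed number. Volume convergence again, applied now along $M_j\to X$, shows that for points $y_j\in M_j$ close to $x$ we have $\Vol(B_r(y_j))\geq(1-2\delta)\omega_nr^n$. Bishop--Gromov, together with $r\ll 1$ so that the lower bound $-(n-1)$ is negligible at scale $r$, propagates this to all smaller scales and to all centres in $B_{r/2}(y_j)$, up to a slightly larger error.

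\emph{Step 2: $\epsilon$-regularity (the main obstacle).} The key claim is the following. Fix $p<\infty$ and $\beta<1$. There exist $\delta(n)>0$ and $c(n,\beta)>0$ such that, if $|\Ric|\leq n-1$ on $B_r(y)$ and all volume ratios there are at least $1-\delta$, then the $C^{1,\beta}$-harmonic radius at $y$ is at least $cr$. Concretely, there are harmonic coordinates on $B_{cr}(y)$ in which $\|g_{ij}-\delta_{ij}\|_{C^{1,\beta}}$ is small.

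I would prove this by contradiction and blow-up. Assume the claim fails along a sequence. Rescale each counterexample at a point where the harmonic radius is almost minimal relative to its distance to the boundary, normalized so that this radius equals $1$. In harmonic coordinates the Ricci tensor takes the form
\[
-\tfrac12 g^{ab}\partial_a\partial_b g_{ij}+Q(g,\partial g)=\Ric_{ij},
\]
and after rescaling $|\Ric|\to0$. Elliptic $L^p$ estimates then give uniform $W^{2,p}$ bounds on the metric coefficients, hence $C^{1,\beta'}$ bounds for some $\beta'>\beta$. Arzelà--Ascoli yields a subsequential limit that is a complete $C^{1,\beta}$ weakly Ricci-flat manifold. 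Elliptic regularity in harmonic coordinates makes it smooth and Ricci-flat.

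The volume ratios of the limit are $\geq 1-\delta$ at every scale. Since Ricci-flatness makes the volume ratio monotone, the asymptotic volume ratio is close to $1$, and for $\delta$ small enough Bishop--Gromov rigidity (or Anderson's gap theorem) forces the limit to be flat $\mathbb{R}^n$. Flat $\mathbb{R}^n$ has infinite harmonic radius. On the other hand, the harmonic radius is continuous under $C^{1,\beta'}$ convergence of the metric coefficients, because we have the slack $\beta'>\beta$, and so the limit must have harmonic radius $1$. This is a contradiction. The delicate parts are choosing the point-picking scale correctly and verifying this continuity of the harmonic radius.

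\emph{Step 3: passing to the limit.} Apply Step 2 uniformly to $M_j$ near $x$. The harmonic charts on $B_{cr}(y_j)$ have uniformly $C^{1,\beta'}$ metric coefficients. They therefore subconverge, compatibly with the Gromov--Hausdorff approximations, to a bi-Lipschitz chart of $B_{cr/2}(x)\subset X$ in which the limit metric tensor is $C^{1,\beta}$ and induces $d$. Transition maps between overlapping limit charts are limits of harmonic maps with uniform $C^{2,\beta}$ bounds, so they are $C^{2,\beta}$. This gives a $C^{1,\beta}$ Riemannian structure on a neighbourhood of $x$.

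Finally, at every point $z$ of such a neighbourhood the tangent cone of a $C^{1}$ Riemannian metric is $\mathbb{R}^n$. Hence $z\in\mathcal{R}(X)$, which proves that $\mathcal{R}(X)$ is open.
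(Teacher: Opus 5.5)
Your proposal is correct and is essentially the argument behind the paper's citation of \cite{And90, CC96}; the paper gives no proof of its own. Your steps are the standard ones: volume convergence gives almost-Euclidean volume ratios near a regular point, Anderson's harmonic-radius $\epsilon$-regularity applies under the two-sided Ricci bound, and the harmonic charts pass to the limit. Two small points need tightening. First, plain Bishop--Gromov only propagates the almost-Euclidean ratio to centres in $B_{\eta r}(y_j)$ with $\eta\ll1$, not to all of $B_{r/2}(y_j)$. Second, in the contradiction argument you should let $\delta_i\to0$, so that the blow-up limit has volume ratio exactly $1$ and the equality case of Bishop--Gromov, rather than a gap theorem, identifies it as $\mathbb{R}^n$.
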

We point out that this theorem involves the $\epsilon$-regularity results from \cite{And90}, which are not valid for general noncollapsed Ricci limit spaces.
In the latter case, there are examples where $\mathcal{S}(X)$ is dense (see e.g. \cite{otsu1994riemannian}).
Still, the Reifenberg-type results in \cite{CC97} showed that outside \emph{a subset of $\mathcal{S}(X)$}, $X$ is locally bi-H\"older to a Riemannian manifold.

\subsection{One-phase Free Boundary Problem}
In this subsection, we review some key results about the one-phase free boundary problem, starting with the Euclidean setting.
We refer to \cite{CS05,Vel23} for an overview of the topic.
The non-smooth setting will be discussed in Subsection \ref{subsec:FBP_nonsmooth}. 
\begin{theorem}[Existence and fundamental regularity, {\cite{AC81}}]\label{thm:Euc_one-phase_fund}
Consider the one-phase problem \eqref{1-phase_Ber} on a bounded domain $\Omega \subset \mathbb{R}^n$.
Assume $\|Q\|_{C^0(\Omega)} + \|Q^{-1}\|_{C^0(\Omega)} \leq \Lambda$.
Then we have the following:
\begin{enumerate}[topsep=0pt, ]
    \item\textbf{Existence and Euler-Lagrange equation}:
   Minimizers exist (not uniquely, in general).
   For any minimizer $u$, the set $\{u>0\}\cap \Omega$ has locally finite perimeter, and we have
    \[
    \Delta u = Q\,\mathcal H^{n-1}\llcorner \partial\{u>0\}\cap \Omega
    \]
    in the sense of distributions.
    In particular (also see \cite{Caf89}), $u$ solves the boundary value problem \eqref{eq:PDE} with the free boundary in the viscosity sense.
    \item\textbf{Lipschitz regularity.}
    For $x \in \Omega$ and any ball $B_{2r}(x)\subset \Omega$, we have
    \[
    \|\nabla u\|_{L^\infty(B_r(x))} \leq C(n,\Omega,\Lambda,v_0,r).
    \]
    
    \item \textbf{Non-degeneracy.}
    For any $x\in \{u>0\}$ and any ball $B_{2r}(x)\subset \Omega$, we have
    \[
        \|u\|_{L^\infty(B_r(x))}\geq C(n,\Omega,\Lambda,v_0)r.
    \]
    
    \item \textbf{Regularity of the free boundary:}
    The free boundary $\partial\{u>0\}\cap \Omega$ is a $C^{1,\alpha}$-manifold away from a relatively closed subset $\mathcal{S}(u)$ with $\mathcal{H}^{n-1}(\mathcal{S}(u))=0$.
\end{enumerate}
\end{theorem}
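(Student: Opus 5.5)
This is the classical theorem of \cite{AC81}. I would follow their strategy and prove the four items in the order stated, since each one feeds the next.

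\textbf{Existence and first variation.} For existence I would apply the direct method. The functional $E$ is coercive on the affine space $v_0+W^{1,2}_0(\Omega)$, since $Q$ is bounded. The Dirichlet term is weakly lower semicontinuous. Along a minimizing sequence I can pass to a subsequence converging strongly in $L^2$ and a.e., so $1_{\{v>0\}}\le\liminf 1_{\{v_j>0\}}$ a.e., and Fatou handles the measure term. Replacing a minimizer by its positive part does not increase $E$, so minimizers may be taken nonnegative. Comparing $u$ with $u+t\varphi$ for $\varphi\ge 0$ shows that $u$ is subharmonic. Comparing with its harmonic replacement on balls inside $\{u>0\}$ shows that $u$ is harmonic there once $u$ is known to be continuous. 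Hence $\Delta u$ is a nonnegative Radon measure supported on $\partial\{u>0\}$. To identify this measure as $Q\,\mathcal H^{n-1}\llcorner\partial\{u>0\}$ I would use the density estimates from the next step. These give that $\partial\{u>0\}$ is $\mathcal H^{n-1}$-locally finite and that $\{u>0\}$ has locally finite perimeter. Blowing up at reduced-boundary points then yields half-plane solutions $Q(x\cdot\nu)^+$, which fixes the density of $\Delta u$.

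\textbf{Lipschitz bound and non-degeneracy.} For the Lipschitz bound, fix $B_r(x)\subset\Omega$ and compare $u$ with its harmonic replacement $h$ in $B_r(x)$. Minimality gives
\begin{equation*}
\int_{B_r(x)}|\nabla(u-h)|^2\le \int_{B_r(x)} Q^2\,1_{\{u=0\}}\le \Lambda^2|B_r|.
\end{equation*}
Combining this with a Poincar\'e argument, I expect to obtain the key estimate: if $u$ vanishes somewhere in $B_{r/2}(x)$, then $\fint_{\partial B_r(x)}u\le Cr$. This linear growth away from the free boundary, together with interior gradient estimates for harmonic functions in $\{u>0\}$, gives $\|\nabla u\|_{L^\infty(B_r)}\le C$. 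For non-degeneracy I would argue in the converse direction. If $\sup_{B_r(x)}u\le\varepsilon r$ with $\varepsilon$ small, then the competitor $\min(u,\,\text{a suitable barrier})$, which vanishes on $B_{r/2}(x)$, has strictly smaller energy. Hence $u\equiv 0$ on $B_{r/2}(x)$, contradicting $x\in\{u>0\}$. The same two estimates give the upper and lower density bounds for $\{u>0\}$ and $\{u=0\}$ at free boundary points.

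\textbf{Regularity of the free boundary.} I expect this to be the main obstacle. The plan is an improvement-of-flatness scheme. If, in $B_r(x_0)$, the free boundary lies in a slab of width $\sigma r$ and $u$ is close to a half-plane solution, then at scale $\theta r$ it is flat with $\sigma$ replaced by $\sigma/2$ in a rotated direction. Iterating gives a $C^{1,\alpha}$ graph. I would first try to prove the one-step improvement by compactness: normalize the deviation from the linear solution, pass to a limit, and show that the limit solves a linear Neumann-type problem with $C^{1,\alpha}$ estimates. The Lipschitz and density bounds supply the needed compactness. To finish, I would use the fact that at every point of the reduced boundary, blowups are half-planes by De Giorgi's structure theorem. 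Flatness therefore holds at small scales around such points, so the singular set is contained in $\partial\{u>0\}\setminus\partial^*\{u>0\}$. That set is $\mathcal H^{n-1}$-null by the density bounds and the perimeter theory, giving $\mathcal H^{n-1}(\mathcal S(u))=0$. It is relatively closed because the flatness condition is open.
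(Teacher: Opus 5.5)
The paper does not prove this statement; it quotes it from \cite{AC81}. Your outline follows the Alt--Caffarelli architecture for Items 1--3 and uses a De Silva--type linearization for Item 4. Two steps, however, fail as you describe them.

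\textbf{Item 4: compactness of the normalized deviations.} The sentence ``the Lipschitz and density bounds supply the needed compactness'' is where the argument breaks. Those bounds make the $u_k$ precompact, but not the normalized deviations $w_k=(u_k-Q(x\cdot e_n)^+)/\sigma_k$. Flatness only makes $w_k$ bounded, the Lipschitz bound gives merely $|\nabla w_k|\le C/\sigma_k$, and interior harmonic estimates give convergence only on $\{x_n>\delta\}$. Without equicontinuity up to $\{x_n=0\}$ you can neither derive a boundary condition for the limit nor transfer its $C^{1,\alpha}$ estimate back to $u_k$.

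The missing idea is a Harnack inequality at the free boundary. In De Silva's form: if $u$ lies between two translates of the half-plane solution at distance $\sigma$ in $B_1$, then in a fixed smaller ball $B_{\rho_0}$ it lies between translates at distance $(1-c)\sigma$. Iterating this down to scales comparable to $\sigma_k$ gives a uniform modulus of continuity for $w_k$.

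You also need the condition $|\nabla u|=Q$ in a form that survives this normalization, and the distributional identity of Item 1 does not. After subtracting the half-plane solution and dividing by $\sigma_k$ you face $\sigma_k^{-1}\bigl(\mathcal H^{n-1}\llcorner\partial\{u_k>0\}-\mathcal H^{n-1}\llcorner\{x_n=0\}\bigr)$, which flatness does not control. What is used instead is either the viscosity formulation, which is part of Item 1 and which your outline never addresses, or, in \cite{AC81}, flatness classes that carry a quantitative gradient bound $|\nabla u|\le Q(1+\tau)$ in addition to the slab condition.

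\textbf{Item 1: finiteness of the free boundary measure.} Two-sided \emph{volume} density bounds for $\{u>0\}$ and $\{u=0\}$ do not imply that $\partial\{u>0\}$ has locally finite $\mathcal H^{n-1}$-measure, nor that $\{u>0\}$ has locally finite perimeter. The von Koch snowflake satisfies both bounds and has infinite perimeter. What you need is the density bound for the measure $\mu=\Delta u$ itself: $cr^{n-1}\le\mu(B_r(x))\le Cr^{n-1}$ for $x\in\partial\{u>0\}$.
\begin{itemize}
\item The upper bound follows from testing with a cutoff and using the Lipschitz bound.
\item For the lower bound, use $u(x)=0$ and $\fint_{\partial B_r(x)}u=\int_{B_r(x)}G_r(x,\cdot)\,d\mu$, where $G_r$ is the Green function of $B_r(x)$. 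Bound the left side below by $cr$ using non-degeneracy. Split the integral at radius $\delta r$, bounding the inner part by $C\delta r$ via the upper bound.
\end{itemize}
A covering argument then gives $\mathcal H^{n-1}\llcorner\partial\{u>0\}\le C\mu$ locally. Hence $\mu=q_u\,\mathcal H^{n-1}\llcorner\partial\{u>0\}$ with $c\le q_u\le C$, and Federer's criterion yields locally finite perimeter.

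Likewise, the slope of the half-plane blowup at a reduced-boundary point is not automatically $Q(x_0)$. You must show two things: that blowups are minimizers with constant $Q(x_0)$, which uses continuity of $Q$; and that $\alpha(x\cdot\nu)^+$ is a minimizer only if $\alpha=Q(x_0)$.

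Two minor points. First, for subharmonicity the competitor should be $u-t\varphi$ with $t>0$ and $\varphi\ge0$, so that the positivity set does not grow. Second, your Lipschitz step presupposes continuity of $u$, which should be obtained first, e.g.\ by a Campanato iteration of your harmonic-replacement estimate.
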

Based on the last item above, one defines the regular set of the free boundary, $\mathcal{R}(u)$, as the set of points where the free boundary is locally a $C^{1,\alpha}$-manifold.
Then its complement is exactly the $\mathcal{S}(u)$ above, defined as the singular set.
\par
\cite[4.7]{AC81} also introduced the blowups of minimizers: for $x\in \pt\{u>0\}\cap \Omega$ and $y\in \mathbb{R}^n$,
\begin{equation}\label{eq:blowups}
    u_{x}(y):=\lim_{t_i\to 0} \frac{1}{t_i}u(x+t_iy),
\end{equation}
where $\{t_i\}$ is some sequence converging to $0$.
With their convergence results, they showed that the blowups are well-defined global minimizers on $\mathbb{R}^n$ with constant $Q=Q(x)$. 
Furthermore, their $\epsilon$-regularity theorem \cite[Theorem 8.1]{AC81} showed that $\mathcal{R}(u)$ contains exactly those points where there is a linear blowup as in \eqref{eq:linear_sol}.
\par
More systematic study of $\mathcal{S}(u)$ begins with \cite{Wei99}, who introduced the Weiss monotonicity formula:
\begin{theorem}[\cite{Wei99}, Theorem 1.2]\label{thm:orginal_weiss}
    Let  $u$ be a minimizer of the one-phase problem \eqref{1-phase_Ber} on a domain $\Omega\subset \mathbb{R}^n$, with $Q \equiv 1$. Fix $x\in \pt\{u>0\}\cap \Omega$.
    For $r\in(0,d(x, \pt \Omega))$, define
    \begin{equation*}
        W(r):=r^{-n}\int_{B_r(x)}(|\nabla u|^2 +1_{u>0})\dd x 
        -r^{-n-1}\int_{\pt B_r(x)}u^2 \dd{\mathcal{H}^{n-1}}.
    \end{equation*}
    Then we have
    \begin{equation*}
        W'(r)=\int_{\pt B_r(x)} \frac{|u(y)-(y-x)\cdot \nabla u(y)|^2}{|y-x|^{n+2}}\dd{\mathcal{H}^{n-1}}(y).
    \end{equation*}
\end{theorem}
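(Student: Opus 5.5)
The plan is to establish the formula by differentiating each of the two terms of $W$ separately, using the scaling structure. Translate so that $x=0$ and set $u_r(y):=u(ry)/r$ for $y\in B_1$. A change of variables gives
\begin{equation*}
W(r)=\int_{B_1}\left(|\nabla u_r|^2+1_{u_r>0}\right)\dd y-\int_{\pt B_1}u_r^2\dd{\mathcal{H}^{n-1}} .
\end{equation*}
We also have $\frac{d}{dr}u_r(y)=\frac{1}{r}\bigl(\nabla u(ry)\cdot y-u_r(y)\bigr)$. This factor is exactly the deviation from $1$-homogeneity, and it is what should produce the square in the final formula.

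The key step is to compute $\frac{d}{dr}\int_{B_1}(|\nabla u_r|^2+1_{u_r>0})$. Differentiating the integrand directly is not legitimate, since $1_{u_r>0}$ is not differentiable in $r$. Instead I would use the domain-variation (inner-variation) identity that minimality gives. Perturb $u$ by $u(y+t\eta(y))$ with $\eta$ a smooth vector field approximating $y\,\phi_\epsilon(|y|)$, where $\phi_\epsilon$ is a cutoff equal to $1$ on $B_r$ and $0$ outside $B_{r+\epsilon}$. Since $u$ is a minimizer, the first variation in $t$ vanishes, which yields
\begin{equation*}
\int \left[(|\nabla u|^2+1_{u>0})\operatorname{div}\eta-2\nabla u\, D\eta\, \nabla u\right]=0 .
\end{equation*}
Letting $\epsilon\to0$ gives, for a.e. $r$,
\begin{equation*}
n\int_{B_r}(|\nabla u|^2+1_{u>0})-r\int_{\pt B_r}(|\nabla u|^2+1_{u>0})-2\int_{B_r}|\nabla u|^2+2r\int_{\pt B_r}(\pt_\nu u)^2=0 .
\end{equation*}
Separately, since $u$ is harmonic in $\{u>0\}$ and Lipschitz with $\Delta u\ge 0$ as a measure supported where $u=0$, we have $\int_{B_r}|\nabla u|^2=\int_{\pt B_r}u\,\pt_\nu u$. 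This is justified by testing $\Delta u$ against $u$ times a cutoff, using that $u=0$ on the support of the free boundary measure.

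I would then combine these. The derivative of $r^{-n}\int_{B_r}(\cdot)$ equals $-nr^{-n-1}\int_{B_r}(\cdot)+r^{-n}\int_{\pt B_r}(\cdot)$. Substituting the domain-variation identity turns this into $r^{-n}\bigl(2\int_{\pt B_r}(\pt_\nu u)^2\bigr)-2r^{-n-1}\int_{B_r}|\nabla u|^2$ (recomputed carefully). The boundary term gives $\frac{d}{dr}\bigl(r^{-n-1}\int_{\pt B_r}u^2\bigr)=-2r^{-n-2}\int_{\pt B_r}u^2+2r^{-n-1}\int_{\pt B_r}u\,\pt_\nu u$. Using $\int_{B_r}|\nabla u|^2=\int_{\pt B_r}u\pt_\nu u$, the total becomes
\begin{equation*}
2r^{-n}\int_{\pt B_r}\left((\pt_\nu u)^2-\frac{2}{r}u\pt_\nu u+\frac{u^2}{r^2}\right)=2r^{-n-2}\int_{\pt B_r}(r\pt_\nu u-u)^2 .
\end{equation*}
Since $r\pt_\nu u=y\cdot\nabla u$ on $\pt B_r$, this is the stated integrand, and the constant $2$ should be matched against the normalization of the statement. $W$ is absolutely continuous because $u$ is Lipschitz, so the a.e. identity gives the claim.

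I expect the main obstacle to be justifying the domain-variation identity rigorously. This needs minimality rather than just the PDE, because $1_{u>0}$ is not smooth. It also requires passing to the limit in the cutoff at a.e. radius $r$, which relies on Lebesgue points of the boundary integrals and on the Lipschitz bound from Theorem \ref{thm:Euc_one-phase_fund}.
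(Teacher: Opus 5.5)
Your argument is correct, and it is Weiss's own proof of \cite[Theorem 1.2]{Wei99}; the paper only quotes that result and gives no proof. The inner variation along $y\,\phi_\epsilon(|y|)$, combined with $\int_{B_r}|\nabla u|^2=\int_{\pt B_r}u\,\pt_\nu u$ (valid because $u=0$ on the support of $\Delta u$), gives $W'(r)=2r^{-n-2}\int_{\pt B_r}(r\pt_\nu u-u)^2$ for a.e.\ $r$. Your intermediate derivative computations check out. The scaling paragraph with $u_r$ is never actually used.

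Do not leave the constant to be ``matched against the normalization.'' Your factor $2$ is right, and the displayed statement has simply dropped it. The correct identity, as in Weiss, is
\begin{equation*}
W'(r)=2\int_{\pt B_r(x)}\frac{|u(y)-(y-x)\cdot\nabla u(y)|^2}{|y-x|^{n+2}}\dd{\mathcal{H}^{n-1}}(y)\quad\text{for a.e. } r.
\end{equation*}
As a sanity check, take $u(y)=1+y_1$, which is harmonic and positive near $0$, so both of your identities hold there. Writing $\omega_n=|B_1|$, one gets $W(r)=\omega_n-n\omega_n r^{-2}$, hence $W'(r)=2n\omega_n r^{-3}$. This is twice the right-hand side as printed.

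The discrepancy is harmless for how the paper uses the result, which is only $W'\ge 0$ and the fact that $W$ is constant iff $u$ is $1$-homogeneous. Still, you should state explicitly that what you prove is the formula with the factor $2$, for a.e.\ $r$.
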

This shows that the Weiss quantity $W$ is increasing in $r$, and is constant if and only if $u$ is $1$-homogeneous with respect to the fixed point $x$.
In particular, this implies that blowups and blowdowns of minimizers are $1$-homogeneous (\cite[Theorem 2.8]{Wei99}), which is conjectured in \cite{AC81}.
In the terminology of geometric analysis, these $1$-homogeneous minimizers are $0$-symmetric objects and can be further classified based on the amount of symmetries they have, in the sense of how many invariant orthogonal directions they have.
This opens the door towards Federer's dimension-reduction argument: In \cite[Theorem 4.5]{Wei99}, it was proved that
\begin{equation}\label{eq:dim_bd_euc_1-phase_sing}
    \dim_H \mathcal{S}(u)\leq n-k^*,
\end{equation}
where $k^*$ is the critical dimension in Definition \ref{def:crit_dim}.
With later works \cite{CJK04, JS15, DJ09} about the existence/nonexistence of nonlinear homogeneous global minimizers, we now know $k^*\in \{5,6,7\}$, although the exact value is still an open question.
We also mention the work \cite{EE18} where (quantitative) singular strata for the one-phase problem in the Euclidean setting were introduced to show finer properties of the singular set, such as rectifiability.
\par
Finally, we mention the following Liouville-type theorem, which is another consequence of the Weiss monotonicity formula: 
\begin{corollary}[\cite{Wan12}]\label{cor:global_min_linear}
    If $n$ is less than the critical dimension $k^*$, then any nontrivial (potentially inhomogeneous) global minimizer of the one-phase problem with constant $Q$ on $\mathbb{R}^n$ is a linear solution as in \eqref{eq:linear_sol}, up to translation.
\end{corollary}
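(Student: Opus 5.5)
The plan is the classical Weiss blowdown argument combined with the Liouville theorem for homogeneous minimizers. Let $u$ be a nontrivial global minimizer on $\mathbb{R}^n$ with constant $Q$ and $n<k^*$. Normalizing, we may take $Q\equiv 1$. The free boundary is nonempty: otherwise $u$ is a positive harmonic function on $\mathbb{R}^n$, hence constant. A positive constant fails to be a minimizer, since on a large ball $B_R$ cutting $u$ off to zero in the interior costs energy of order $R^{n-1}$, while the term $\int 1_{u>0}$ saves order $R^n$. After translating, we may therefore assume $0\in\pt\{u>0\}$.

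\textbf{Step 1: blowups and blowdowns are homogeneous.} Consider the rescalings $u_r(y)=u(ry)/r$, which are again global minimizers. By Lipschitz regularity and nondegeneracy (Theorem \ref{thm:Euc_one-phase_fund}, applied on balls of arbitrary size, where the constants are scale invariant for global minimizers), the family $\{u_r\}$ is uniformly Lipschitz and nondegenerate. Taking $r\to 0$ along a subsequence gives a blowup $u_0$, and taking $r\to\infty$ gives a blowdown $u_\infty$. By the compactness of minimizers from \cite{AC81}, both are nontrivial global minimizers, and the positivity sets converge locally in Hausdorff distance and in $L^1$.

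Since $W_u(r)=W_{u_1}(r)$ is scale invariant in the sense that $W_{u_r}(s)=W_u(rs)$, monotonicity in Theorem \ref{thm:orginal_weiss} gives that $W_{u_0}$ and $W_{u_\infty}$ are constant in $s$. Their values are $W(0^+)$ and $W(\infty)$ respectively, the latter finite because $W$ is bounded by the uniform Lipschitz bound. Hence $u_0$ and $u_\infty$ are $1$-homogeneous. Since $n<k^*$, both are linear solutions, so both Weiss values equal the density of a half-space:
\[
W(0^+)=W(\infty)=\tfrac{\omega_n}{2},
\]
where the constant is computed directly from $h=(x_n)^+$.

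\textbf{Step 2: rigidity.} Monotonicity now forces $W$ to be constant on $(0,\infty)$, so $W'\equiv 0$. By the formula in Theorem \ref{thm:orginal_weiss}, $u(y)=y\cdot\nabla u(y)$ almost everywhere, so $u$ itself is $1$-homogeneous about $0$. Applying Definition \ref{def:crit_dim} once more, $u$ is a linear solution. Undoing the translation, the original minimizer is a linear solution up to translation.

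\textbf{Main obstacle.} The delicate step is the passage to the limit in Step 1. We need convergence of the Weiss energies under blowdown, and this requires strong $W^{1,2}_{\mathrm{loc}}$ convergence of $u_r$ together with $L^1_{\mathrm{loc}}$ convergence of $1_{\{u_r>0\}}$. I would establish this using the uniform Lipschitz and nondegeneracy estimates and the standard Alt--Caffarelli compactness argument. We also need that $W$ is bounded above as $r\to\infty$; this follows from the Lipschitz bound, since $|\nabla u|\le L$ globally and $u(y)\le L|y|$.
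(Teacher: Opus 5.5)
Your proof is correct and is the standard Weiss-monotonicity argument that the paper has in mind. The paper gives no proof of its own; it cites \cite{Wan12} and calls the result a consequence of Theorem \ref{thm:orginal_weiss}. Your steps are the usual ones: homogeneous blowups and blowdowns are linear when $n<k^*$, so $W(0^+)=W(\infty)=\omega_n/2$, which forces $W'\equiv 0$ and makes $u$ itself $1$-homogeneous and hence linear. The uniform Lipschitz bound should be taken from the Alt--Caffarelli estimate at free boundary points, which is scale-invariant, rather than from the $\Omega$- and $v_0$-dependent form stated in the paper; you already flag this.
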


\subsubsection{One-phase Problem in Non-smooth Setting}\label{subsec:FBP_nonsmooth}
In \cite{CZZ22}, the authors extended the study of the one-phase problem \eqref{1-phase_Ber} to the class of non-smooth spaces known as $\mathsf{RCD}(K,N)$ spaces.
These are metric measure spaces that have, in a synthetic sense, Riemannian (``R") Ricci curvatures (``C") bounded from below by $K$ and the dimension (``D") bounded from above by $N$, where $K\in\mathbb{R}$ and $N\in [1,\infty]$.
We refer to the survey \cite{ambrosio2018calculus} and the references therein for an overview of the $\mathsf{RCD}$ theory, and point out that it includes the Ricci limit spaces as in Definition \ref{def:ric_lim} (see \cite{Gig15}).
In particular, all fundamental results in \cite{CZZ22} are valid in our case, and we recall some of them in this subsection.
Throughout this article, we only consider noncollapsed $\mathsf{RCD}$ spaces without boundary (see \cite{KM21} for related definitions and stability results):
\par
\begin{definition}[\cite{DG18}]
    We say that an $\mathsf{RCD}(K,n)$ space $(X,d,\mu)$ is noncollapsed if $\mu=\mathcal{H}^n$.
\end{definition}
We remark that $n$-dimensional noncollapsed Ricci limit spaces in Definition \ref{def:ric_lim} are indeed noncollapsed $\mathsf{RCD}(-(n-1), n)$ spaces (see \cite{CC97,DG18}).
\par
For the one-phase problem on these noncollapsed $\mathsf{RCD}$ spaces, we have the following results, partially generalizing Theorem \ref{thm:Euc_one-phase_fund}:

\begin{theorem}[{\cite[Subsection 1.2, Theorem 6.1]{CZZ22}}]\label{thm:RCD_FBP_fund}
Consider the one-phase problem \eqref{1-phase_Ber} on a bounded domain $\Omega$ in a noncollapsed $\mathsf{RCD}(K, n)$ space $X$, with $\|Q\|_{C^0(\Omega)} + \|Q^{-1}\|_{C^0(\Omega)} \leq \Lambda$.
Then Items 1, 2, and 3 in Theorem \ref{thm:Euc_one-phase_fund} hold with constants depending also on $K$.
\end{theorem}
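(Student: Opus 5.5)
The plan is to run the Alt--Caffarelli scheme of Theorem \ref{thm:Euc_one-phase_fund}, replacing each Euclidean ingredient with its analogue on noncollapsed $\mathsf{RCD}(K,n)$ spaces. Those spaces are locally doubling, satisfy a local $(1,2)$-Poincar\'e inequality, carry the Laplacian comparison for distance functions, and satisfy Bochner's inequality. Throughout, constants may depend on $K$ as well as on $n,\Omega,\Lambda,v_0$.

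\emph{Existence.} I would use the direct method on the affine class $v_0+W^{1,2}_0(\Omega)$. The energy is coercive thanks to the Poincar\'e inequality. Rellich compactness for doubling--Poincar\'e spaces gives a subsequence converging strongly in $L^2$ and a.e. The Dirichlet part is lower semicontinuous because the Cheeger energy is. For the volume term, a.e. convergence gives $1_{\{v>0\}}\le\liminf_j 1_{\{v_j>0\}}$, so Fatou's lemma applies. Replacing a minimizer by its positive part lowers the energy, so we may take $u\ge 0$.

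\emph{Preliminary regularity and sign of $\Delta u$.} Next I would show that $u$ is a quasi-minimizer of the Dirichlet energy, since $E(u,B_r)\le E(v,B_r)$ forces $\int_{B_r}|\nabla u|^2\le\int_{B_r}|\nabla v|^2+\Lambda^2\mu(B_r)$. De Giorgi--Nash--Moser theory on $\mathsf{RCD}$ spaces then makes $u$ locally H\"older, so $\{u>0\}$ is open. Testing with $u+\varepsilon\varphi$ for $\varphi$ supported in $\{u>0\}$ shows that $u$ is harmonic there. Testing with $u-\varepsilon\varphi$ for $\varphi\ge 0$ leaves $1_{\{u>0\}}$ unchanged or smaller, which gives $\Delta u\ge 0$. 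Hence $\Delta u$ is a nonnegative Radon measure supported on $\partial\{u>0\}\cap\Omega$.

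\emph{Lipschitz bound and non-degeneracy.} For the Lipschitz bound (Item 2), I would compare $u$ with its harmonic replacement $h$ in $B_r(x)$, which yields
\begin{equation*}
\int_{B_r(x)}|\nabla(u-h)|^2\le \Lambda^2\,\mu\bigl(B_r(x)\cap\{u=0\}\bigr).
\end{equation*}
Combined with a Poincar\'e/mean-value argument as in \cite[Lemma 3.2]{AC81}, this should give $u(y)\le C\,d(y,\partial\{u>0\})$. Inside $\{u>0\}$ the Lipschitz bound then follows from the Cheng--Yau type gradient estimate for harmonic functions on $\mathsf{RCD}(K,n)$ spaces, applied on the ball $B_{d(y,\partial\{u>0\})}(y)$. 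For non-degeneracy (Item 3), I would compare $u$ with $\min(u,\psi)$, where $\psi$ is a radial barrier built from $d(\cdot,x)$. The Laplacian comparison theorem makes $\psi$ a supersolution with the right growth. As in \cite{AC81}, this shows that $\sup_{B_r(x)}u\le c\,r$ with $c$ small forces $u\equiv 0$ on $B_{r/2}(x)$.

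\emph{Euler--Lagrange measure.} The remaining part of Item 1 is the identity $\Delta u=Q\,\mathcal H^{n-1}\llcorner\partial\{u>0\}\cap\Omega$. The Lipschitz bound and non-degeneracy give two-sided density bounds for $\{u>0\}$ and for $\Delta u$ at free boundary points, i.e. $\Delta u(B_r(x))\approx r^{n-1}$. This yields local finiteness of the perimeter and $\mathcal H^{n-1}\llcorner\partial\{u>0\}\approx \Delta u$. To identify the density as exactly $Q$, I would blow up at $\mathcal H^{n-1}$-a.e. point of the reduced boundary. By the De Giorgi structure theory for sets of finite perimeter in noncollapsed $\mathsf{RCD}$ spaces, the tangent space there is $\mathbb R^n$ and the set blows up to a half-space. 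The blowups of $u$ are global minimizers, and a one-dimensional computation, using the domain-variation (first variation) identity passed to the limit, shows that they equal $Q(x)(y\cdot\nu)^+$. The viscosity formulation then follows as in \cite{Caf89}.

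I expect this last step to be the main obstacle. It requires stability of minimizers under pGH-convergence of the ambient spaces together with uniform Lipschitz bounds, and it requires the fine structure of the reduced boundary in the non-smooth setting. The Lipschitz estimate, which relies on the $\mathsf{RCD}$ gradient estimate for harmonic functions, is the second delicate point.
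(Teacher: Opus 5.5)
The paper gives no proof of this statement; it quotes \cite{CZZ22}, which carries the Alt--Caffarelli scheme over to the $\mathsf{RCD}$ setting much as you propose. Your outline, however, has a genuine gap in the Lipschitz step (Item 2), and you have placed the difficulty in the wrong spot. The Cheng--Yau estimate for harmonic functions on $\mathsf{RCD}(K,n)$ spaces is standard. The hard part is the linear growth $u(y)\le C\,d(y,\partial\{u>0\})$.

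The inequality you derive, $\int_{B_r}|\nabla(u-h)|^2\le\Lambda^2\mu(B_r\cap\{u=0\})$, is only half of \cite[Lemma 3.2]{AC81}. What actually forces linear growth is the reverse bound $\int_{B_r}|\nabla(u-h)|^2\ge c\,\bigl(\tfrac1r\fint_{\partial B_r}u\bigr)^2\,|B_r\cap\{u=0\}|$, with the same zero set on both sides. Alt and Caffarelli prove it with two Euclidean tools:
\begin{itemize}
\item the Poisson-kernel bound $h(z)\ge c\,(r-|z-x|)\,\tfrac1r\fint_{\partial B_r}u$, valid all the way up to $\partial B_r$;
\item integration of $u-h$ along radial segments, with the centre of the rays moved around inside the ball.
\end{itemize}
Neither survives on a metric ball in an $\mathsf{RCD}$ space. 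There is no Poisson kernel, no Hopf-type linear lower bound for the harmonic replacement near an irregular metric sphere, and no polar structure.

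Poincar\'e plus interior Harnack only give $(\inf_{B_{r/2}}h)^2\,\mu(\{u=0\}\cap B_{r/2})\lesssim\Lambda^2r^2\,\mu(\{u=0\}\cap B_r)$. This does not close. A priori the zero set can sit in the annulus where $h$ has no lower bound, and controlling its density is itself a consequence of the Lipschitz bound. The same problem affects your lower bound $\Delta u(B_r(x))\gtrsim r^{n-1}$. In $\mathbb R^n$ it comes from non-degeneracy via the spherical-mean identity $\tfrac{d}{dr}\fint_{\partial B_r}u=\Delta u(B_r)/\mathcal H^{n-1}(\partial B_r)$, and you give no substitute for that identity.

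One way to close the gap is to use the measure $\Delta u$ and the Dirichlet Green function $G_{B_r}$ of the ball. Let $x_0\in\partial\{u>0\}$ with $B_{2r}(x_0)\subset\Omega$.
\begin{itemize}
\item \emph{Energy identity.} Since $h-u\in W^{1,2}_0(B_r(x_0))$ is nonnegative and $u=0$ on $\operatorname{supp}\Delta u$, you have $\int_{B_r}|\nabla(h-u)|^2=\int_{B_r}(h-u)\,d\Delta u=\int_{B_r}h\,d\Delta u$. By Harnack this is at least $c\,h(x_0)\,\Delta u(B_{r/2}(x_0))$.
\item \emph{Green representation.} $h(x_0)=\int_{B_r}G_{B_r}(x_0,\cdot)\,d\Delta u$. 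On noncollapsed $\mathsf{RCD}(K,n)$ spaces, heat-kernel bounds give $G_{B_r}(x_0,z)\ge c\,r^{2-n}$ for $z\in B_{r/2}(x_0)$ and $G_{B_r}(x_0,z)\le C\,d(x_0,z)^{2-n}$. This holds for $r\le1$ and $n\ge3$, with a logarithmic analogue for $n=2$.
\item \emph{Upper density bound.} The lower Green bound, combined with your energy inequality, gives $\Delta u(B_{r/2}(x_0))\le Cr^{n-1}$ at every scale.
\item \emph{Linear growth.} The upper Green bound, summed over dyadic annuli, gives $h(x_0)\le Cr$. Hence $\sup_{B_{r/2}(x_0)}u\le Cr$, which is the input you need before applying Cheng--Yau.
\item \emph{Lower density bound.} Splitting the same representation into $B_{\delta r}(x_0)$ and its complement, non-degeneracy yields $\Delta u(B_r(x_0))\ge c\,r^{n-1}$.
\end{itemize}
With these replacements the rest of your outline is sound. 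You also need no domain variation on $X$. The blow-up at a reduced-boundary point is already a global minimizer on $\mathbb R^n$ vanishing on a half-space, so it equals $\alpha\,(y\cdot\nu)^+$, and minimality in $\mathbb R^n$ forces $\alpha=Q(x)$.
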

\begin{remark}
Item 1 of the Theorem above involves the notion of sets of finite perimeter in $\mathsf{RCD}$ spaces. 
See e.g. \cite{ABS19_finiteperim, BPS22, BPS23}.
\end{remark}

In contrast to Item 4 in Theorem \ref{thm:Euc_one-phase_fund}, the regularity of the free boundary in the noncollapsed $\mathsf{RCD}(K,n)$ setting is more subtle since singularities may come from the ambient space itself.
This is manifested in the blowups of minimizers, which are now functions defined on tangent spaces that are not necessarily $\mathbb{R}^n$. 
In order to perform blowup analysis in this context, we first need the following notion about the convergence of metric measure spaces, which extends Definition \ref{def:pGH}.

\begin{definition}[Pointed measured Gromov--Hausdorff convergence]\label{def:pmGH}
A sequence of pointed metric measure spaces $(X_j , d_j , \mu_j , x_j)$ is said to converge in the pointed measured Gromov--Hausdorff (pmGH) topology to a pointed metric measure space $(X, d, \mu, x)$ if both
\begin{enumerate}[topsep=0pt, ]
    \item $X_j\xrightarrow{\text{pGH}}X$ as in Definition \ref{def:pGH};
    \item Using the same notations from Definition \ref{def:pGH}, the pushed-forward measures $(i_j)_\#\mu_j\rightharpoonup i_\#\mu$ weakly in duality of continuous functions on $Z$ with bounded support. 
\end{enumerate}
We denote this convergence by $X_j \xrightarrow{\text{pmGH}} X$ and say that $Z$ is the space realizing the convergence.
\end{definition}
We mention that when defining noncollapsed Ricci limit spaces in Definition \ref{def:ric_lim}, we actually have pmGH convergence if all $X_j$ and $X$ are equipped with their corresponding $\mathcal{H}^n$ (see \cite[Theorem 1.2]{DG18}).
In particular, any tangent space (as in definition \ref{def:tang_space})
\begin{equation*}
    (Y,d_Y, \mathcal{H}^n_{d_Y}, y)\in \Tan(X,x)
\end{equation*}
is the pmGH limit of the sequence $X_j:=(X, d_j:=d/r_j, \mathcal{H}^n_{d_j}, x)$ some some $r_j\to 0$.
From now on, we will often omit the reference measures $\mathcal{H}^n$.
\begin{remark}\label{rmk:rcd_tang_cone}
    More generally, such tangent spaces can be defined in the same way for noncollapsed $\mathsf{RCD}(K,n)$ spaces.
By \cite{Stu06}, \cite[Proposition 2.8]{DG18}, these tangent spaces exist and are noncollapsed $\mathsf{RCD}(0,n)$ cones, which by \cite[Corollary 1.3]{Ket15} are exactly those metric measure cones over noncollapsed $\mathsf{RCD}(n-2,n-1)$ spaces with diameter no greater than $\pi$.
Furthermore, the singular set $\mathcal{S}(X)$ and strata $\mathcal{S}^k(X)$ can also be defined for noncollapsed $\mathsf{RCD}$ spaces, as in Definition \ref{def:ric_lim_reg_sing} and \ref{def:ric_lim_strata} for Ricci limit spaces, and it is shown in \cite[Theorem 1.8]{DG18} that $\dim_H \mathcal{S}(X)\leq n-2$ (recall we assume that these spaces are boundary-free; in particular, the tangent cones have no boundary).
\end{remark}

In addition to the convergence of spaces, we also need the notion about uniform convergence of functions along converging spaces:
\begin{definition}
Suppose $X_j\xrightarrow{\text{pmGH}} X$ is realized in $Z$.
Consider a sequence of measurable functions $f_j$ on $B_R(x_j)\subset X_j$ and let $f$ be a measurable function on $B_R(x)$.
We say that $f_j$ converges to $f$ uniformly over $B_R(x_j)$ if for any $\epsilon>0$, there exist $N(\epsilon) \in \mathbb{N}$ and $\delta(\epsilon)$ such that 
    \[
\sup_{\substack{z \in B_R(x_j),w \in B_R(x) \\ d_Z(i_j(z), i(w)) < \delta}}
\left| f_j(z) - f(w) \right| < \varepsilon, \quad \forall k \ge N.
\]
We denote this convergence by $f_j\xrightarrow{\text{uniform}} f$.
\end{definition}

We are now ready to define blowups of functions:

\begin{definition}[Blowups of functions at a point]\label{def:tang_func}
    Let $u$ be a measurable function on a metric measure space $(X,d,\mathcal{H}^n)$ and let $x \in X$.
    We define the space of blowups of $u$ at $x$, denoted by $\Tan(X,u, x)$, to be the set of all quadruples $(Y,d_Y, u_x , y)$ such that 
    \begin{align*}
        (X, d_j:=d/r_j , \mathcal{H}^n_{d_j}, x) &\xrightarrow{\text{pmGH}} (Y,d_Y,\mathcal{H}^n_{d_Y}, y)\\
        u_j:=u/r_j &\xrightarrow{\text{uniform}} u_x,
    \end{align*}
    for some sequence $r_j\in (0,1)$ with  $r_j \to 0$.
\end{definition}

For blowups of minimizers at the free boundary, the compactness results \cite[Theorem 7.1]{CZZ22} imply the following:
\begin{lemma}\label{lem:RCD_blowup_prop}
    Let $u$ be a minimizer of the one-phase problem \eqref{1-phase_Ber} in the noncollapsed $\mathsf{RCD}$ setting, and $x\in \pt\{u>0\}\cap \Omega$.
    Then $\Tan(X,u,x)$ is never empty.
    Furthermore, for any $(Y,d_Y, u_x, y)\in \Tan(X,u,x)$, $u_x$ is a global minimizer on the tangent cone $Y$ with constant $Q=Q(x)$ and $y\in \pt\{u_x>0\}$.
\end{lemma}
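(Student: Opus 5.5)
The statement to prove is Lemma \ref{lem:RCD_blowup_prop}. I will derive it from the compactness theorem \cite[Theorem 7.1]{CZZ22}, combined with the uniform estimates of Theorem \ref{thm:RCD_FBP_fund}.

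\textbf{Step 1: uniform estimates on the rescalings.} Fix $x\in\pt\{u>0\}\cap\Omega$ and any sequence $r_j\to 0$. Set $X_j=(X,d/r_j,\mathcal{H}^n_{d/r_j},x)$ and $u_j=u/r_j$. Each $X_j$ is a noncollapsed $\mathsf{RCD}(-r_j^2(n-1),n)$ space, so the lower curvature bounds improve along the sequence. A change of variables shows that $u_j$ minimizes $E$ on the rescaled domain, with $Q_j(z)=Q(z)$ viewed as a function on $X_j$. Since $Q$ is $C^\alpha$, $Q_j$ converges locally uniformly to the constant $Q(x)$.

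By Item 2 of Theorem \ref{thm:RCD_FBP_fund}, $u$ is Lipschitz near $x$ and $u(x)=0$. Hence the $u_j$ are uniformly Lipschitz and uniformly bounded on $B_R(x)$ in $X_j$, for every fixed $R$ and all large $j$.

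\textbf{Step 2: extracting a limit.} By Gromov precompactness and the volume convergence of \cite{DG18}, a subsequence satisfies $X_j\xrightarrow{\text{pmGH}}(Y,d_Y,\mathcal{H}^n,y)$. The limit is a tangent cone, by Remark \ref{rmk:rcd_tang_cone}. A metric Arzel\`a--Ascoli argument along the realizing space $Z$, followed by a diagonal argument in $R$, gives a further subsequence with $u_j\xrightarrow{\text{uniform}}u_x$ on every ball. So $\Tan(X,u,x)\neq\emptyset$.

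\textbf{Step 3: the limit is a global minimizer.} This is the main obstacle: minimality must be passed through pmGH convergence. Here I invoke \cite[Theorem 7.1]{CZZ22}. Its proof handles this step by showing that $|\nabla u_j|^2$ converges to $|\nabla u_x|^2$ in energy, using Mosco convergence of Cheeger energies. It also shows that $1_{\{u_j>0\}}\to 1_{\{u_x>0\}}$ in $L^1_{\text{loc}}$, using non-degeneracy (Item 3), which prevents the positive phase from vanishing in the limit.

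Competitors are then handled by a gluing construction. Given a competitor $v$ on a ball in $Y$, transplant it to $X_j$ using approximating Lipschitz functions. Interpolate with $u_j$ in a thin annulus, where the energy cost tends to zero. Finally apply the minimality of $u_j$ together with $Q_j\to Q(x)$. This shows that $u_x$ minimizes with constant $Q(x)$.

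\textbf{Step 4: the base point lies on the free boundary.} Since $u_j(x)=0$, we get $u_x(y)=0$ from uniform convergence. By non-degeneracy (Item 3), which is scale invariant, $\sup_{B_r(x)}u_j\geq cr$ for all $r$. This lower bound passes to the limit and gives $\sup_{B_r(y)}u_x\geq cr>0$ for every $r$. Therefore $y\in\pt\{u_x>0\}$.
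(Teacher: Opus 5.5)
Your proposal is correct and follows the same route as the paper, which states the lemma as a direct consequence of the compactness theorem \cite[Theorem 7.1]{CZZ22}. Your Steps 1, 2 and 4 spell out the ingredients the paper leaves implicit in that citation: the scale-invariant Lipschitz bound, precompactness combined with Arzel\`a--Ascoli, and scale-invariant non-degeneracy. One harmless correction: in the general noncollapsed $\mathsf{RCD}(K,n)$ setting the rescaled spaces are $\mathsf{RCD}(Kr_j^2,n)$, not $\mathsf{RCD}(-r_j^2(n-1),n)$.
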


    Here is another big difference from the Euclidean case.
    Due to the lack of a general monotonicity formula on the ambient $X$, it is unknown whether the blowups $u_x$ are $1$-homogeneous on the corresponding tangent cone $Y$.
    Nonetheless, by the Weiss-type formula on $\mathsf{RCD}$ cones \cite[Appendix A]{CZZ22}, the double-blowup $(u_x)_y$ will be $1$-homogeneous on $Y$ (notice that any blowup of a metric measure cone at its tip is itself).
    A similar situation has been observed in \cite{FMS25} for perimeter minimizers in $\mathsf{RCD}$ spaces: It is unknown whether blowups of perimeter minimizing sets are perimeter minimizing cones in the corresponding tangent cone.
    Such situations create issues for the classical dimension reduction arguments, which are addressed in \cite[Theorem 4.5]{FMS25}.
    Their argument is adapted in Lemma \ref{lem:dim_bd_strata_two_asp} for our one-phase problems. 
\par    
Although the blowups may not be $1$-homogeneous, they can still be classified based on the amount of symmetries they have:
\begin{definition}[$k$-symmetric blowups of minimizers]\label{def:k_sym_both_asp}
    Let $u$ be a minimizer of the one-phase problem \eqref{1-phase_Ber} in the noncollapsed $\mathsf{RCD}$ setting.
    For $x\in \pt\{u>0\}\cap \Omega$, we say $(Y,d_Y, u_x , y)\in \Tan(X,u,x)$ is $k$-symmetric if both 
    \begin{enumerate}[topsep=0pt, ]
        \item the metric measure cone $Y \cong \mathbb{R}^k\times C(Z)$ for some metric measure cone $C(Z)$;
        \item Using the identification above, $u_x$ is invariant in the $\mathbb{R}^k$ factor. 
    \end{enumerate}
\end{definition}

For such $k$-symmetric blowups $u_x$, we further have
\begin{lemma}[{\cite[Lemma 8.13]{CZZ22}}, {\cite[Lemma 3.2]{Wei99}}]\label{lem:restrict_min_is_min}
    Using the identification above, the section
    \begin{equation*}
        u_x|_{\{0^k\}\times C(Z)}
    \end{equation*}
    is a global minimizer of the one-phase problem on $C(Z)$ with constant $Q=Q(x)$.
\end{lemma}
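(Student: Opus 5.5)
We must show that if $u_x$ is a blowup on $Y\cong\mathbb{R}^k\times C(Z)$, invariant in the $\mathbb{R}^k$ factor, then its restriction $w:=u_x|_{\{0^k\}\times C(Z)}$ minimizes the one-phase energy on $C(Z)$. The approach is the classical Weiss argument by contradiction, which extends a better competitor on the slice to a cylinder.

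Suppose $w$ is not a minimizer. Then there exist a bounded domain $D\subset C(Z)$, say $D\subset B_R(o)$, and a competitor $\tilde w$ with $\tilde w-w\in W^{1,2}_0(D)$ such that
\[
E_{C(Z)}(\tilde w,D)\le E_{C(Z)}(w,D)-\delta
\]
for some $\delta>0$. Since the measure on $Y$ is the product $\mathcal{L}^k\otimes\mathcal{H}^{n-k}$, and since Sobolev functions and minimal weak upper gradients on an $\mathsf{RCD}$ product split as $|\nabla f|^2=|\nabla_{\mathbb{R}^k}f|^2+|\nabla_{C(Z)}f|^2$ (via the tensorization of Cheeger energies of Ambrosio--Gigli--Savaré), we have $E_Y(u_x,[-L,L]^k\times D)=(2L)^kE_{C(Z)}(w,D)$ by invariance.

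The competitor on $Y$ uses a cutoff $\eta_L(t)$ on $\mathbb{R}^k$, equal to $1$ on $[-L,L]^k$, vanishing outside $[-L-1,L+1]^k$, and with $|\nabla\eta_L|\le 1$. Define
\[
v(t,z):=\eta_L(t)\,\tilde w(z)+(1-\eta_L(t))\,w(z),
\]
which agrees with $u_x$ outside the bounded set $[-L-1,L+1]^k\times D$. On the inner cylinder the energy saving is $(2L)^k\delta$. On the layer $([-L-1,L+1]^k\setminus[-L,L]^k)\times D$, of measure $O(L^{k-1})\mathcal{H}^{n-k}(D)$, the energy of $v$ is bounded by a constant times $L^{k-1}$. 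This uses the Lipschitz bound on $u_x$ (Theorem \ref{thm:RCD_FBP_fund}), the boundedness of $\tilde w$ in $W^{1,2}$ (after truncating $\tilde w$ to $[0,\sup_D w]$, which does not increase energy), and the pointwise bound $1_{v>0}\le 1$.

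Letting $L\to\infty$, the competitor $v$ has strictly smaller energy than $u_x$ on a bounded domain, contradicting the global minimality of $u_x$ from Lemma \ref{lem:RCD_blowup_prop}.

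The main obstacle is the nonsmooth calculus, not the scaling. Three points need justification. First, the product formula for $|\nabla v|^2$ on $\mathbb{R}^k\times C(Z)$, for which I would cite the tensorization results for Cheeger energies on products of $\mathsf{RCD}$ spaces. Second, the fact that $W^{1,2}_0$ boundary data on $D$ lift to the cylinder. Third, the identification $\mathcal{H}^n_Y=\mathcal{L}^k\otimes\mathcal{H}^{n-k}_{C(Z)}$ for noncollapsed cones. Once these are in place, the argument is the same as \cite[Lemma 3.2]{Wei99}.
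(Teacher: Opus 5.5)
Your proposal is correct and is essentially the argument behind the references the paper cites; the paper gives no proof of its own beyond \cite[Lemma 3.2]{Wei99} and \cite[Lemma 8.13]{CZZ22}, which use exactly your cylinder-with-cutoff comparison (a saving of order $L^k$ against a transition-layer cost of order $L^{k-1}$), carried over to $\mathbb{R}^k\times C(Z)$ via tensorization of the Cheeger energy and the product splitting of the measure. The technical inputs you flag are the right ones, and any positive constant in the measure splitting is harmless because rescaling the whole functional does not change minimizers.
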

\begin{remark}\label{rmk:sec_min_min_corresp}
        In fact, we have the following general fact(see e.g.\@ \cite[Lemma 10.10]{Vel23}): 
        Let $Z$ be a noncollapsed $\mathsf{RCD}$ space.
        Let $v\in W^{1,2}_{\text{loc}}(Z)$ be a nonnegative function, and define $\tilde{v}:\mathbb{R}\times Z\to \mathbb{R}$ by
        \begin{equation*}
        \tilde{v}(t,z)=v(z).
        \end{equation*}
        Assuming $Q$ is constant.
        Then $v$ is a global minimizer on $Z$ if and only if $\tilde{v}$ is a global minimizer on $\mathbb{R}\times Z$.
\end{remark}

With the notion of symmetry in mind, we now define the singular strata of $\pt\{u>0\}\cap \Omega$:
\begin{definition}[Singular strata of free boundary]\label{def:strata_both_asp}
    For $k\in \mathbb{N}_0$, the $k$-singular stratum $\mathcal{S}^k(u)$ collects those points in $\pt\{u>0\}\cap \Omega$ such that no blowups are $(k+1)$-symmetric in the sense of Definition \ref{def:k_sym_both_asp}.
    That is, any blowup at these points is at most $k$-symmetric.
\end{definition}
Such strata can roughly be viewed as a combination of the strata in Definition \ref{def:ric_lim_strata} and the strata for one-phase problems in \cite[Definition 1.8]{EE18}.
Clearly, we have the following inclusion:
\begin{equation}\label{eq:dual_strata_inclu}
    \mathcal{S}^0(u) \subset \mathcal{S}^1(u) \subset \ldots \subset \mathcal{S}^{n-1}(u)=\pt\{u>0\}\cap\Omega,
\end{equation}
where the last equality follows from Item 3: non-degeneracy in Theorem \ref{thm:RCD_FBP_fund}.
For those points attaining the maximal symmetry, we define
\begin{definition}\label{def:RCD_FBP_reg_sing}
    The regular set $\mathcal{R}(u)$ collects those $x\in \pt\{u>0\}\cap \Omega$ such that 
    \begin{equation*}
        \Tan(X,u,x)=\{(\mathbb{R}^n,d_{\text{euc}},h,0)\},
    \end{equation*}
    where $h$ is the linear solution in \eqref{eq:linear_sol} with $\nu=e_n$; notice that all linear solutions are identified as the same here since we mod out rotations on $\mathbb{R}^n$.
    The singular set is defined as the complement, $\mathcal{S}(u):=\pt\{u>0\}\cap \Omega \backslash\mathcal{R}(u)$.
\end{definition}
We remark that by the $\epsilon$-regularity in \cite[Theorem 8.6]{CZZ22}, we have $x\in \mathcal{R}(u)$ if and only if
\begin{equation*}
    (\mathbb{R}^n,d_{\text{euc}},h,0)\in \Tan(X,u,x).
\end{equation*}
Together with $\mathcal{S}(X)=\mathcal{S}^{n-2}(X)$ in \eqref{eq:ric_lim_strata_n-2=n-1}, we then have
\begin{equation}\label{eq:dual_sing_equal_n-2_strata}
    \mathcal{S}(u)= \mathcal{S}^{n-2}(u).
\end{equation}
\par
We also point out that the (infinitesimal) symmetry of $u$ implies that of the free boundary at $x$.
For example, if $x\in \mathcal{R}(u)$, then blowups of $\{u>0\}\cap \Omega$ at $x$ can only be
\begin{equation*}
    (\mathbb{R}^n, d_{\text{euc}},\mathbb{R}^{n-1}\times [0,+\infty), 0)
\end{equation*}
(see \cite[Definition 2.13]{CF24} for the definition of blowups of sets of locally finite perimeter).
Likewise, similar symmetry results hold for points in $\mathcal{S}^k(u)$.

\par
With the preparations above, we can now state the regularity results of the free boundary in the noncollapsed $\mathsf{RCD}$ setting:

\begin{theorem}[{\cite[Theorem 1.9, Corollary 1.11]{CZZ22}}]\label{thm:RCD_free_bdy_reg}
Suppose $u$ is a minimizer of the one-phase problem \eqref{1-phase_Ber} in the noncollapsed $\mathsf{RCD}(K,n)$ setting, with $\|Q\|_{C^0(\Omega)} + \|Q^{-1}\|_{C^0(\Omega)} \leq \Lambda$.
Then we have
\begin{enumerate}[topsep=0pt, ]
    \item \textbf{Dimension bound on the singular set}: $\dim_{H}\mathcal{S}(u) \leq n-3$.
    \item \textbf{Manifold structure}: There is a relatively open set $\mathcal{O}\subset \pt\{u>0\}\cap \Omega$ containing $\mathcal{R}(u)$ that is locally bi-H\"older homeomorphic to a $(n-1)$-dimensional Riemannian manifold.
\end{enumerate}
\end{theorem}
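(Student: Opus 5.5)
The plan has two parts: a classification of $(n-2)$-symmetric blowups, which gives Item 1 through dimension reduction, and a Reifenberg-type argument for Item 2.

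\textbf{Reduction for Item 1.} By \eqref{eq:dual_sing_equal_n-2_strata} we know $\mathcal{S}(u)=\mathcal{S}^{n-2}(u)$. It then suffices to show two things:
\begin{enumerate}[topsep=0pt]
    \item $\mathcal{S}(u)=\mathcal{S}^{n-3}(u)$, i.e.\ no point of $\mathcal{S}(u)$ has an $(n-2)$-symmetric blowup;
    \item $\dim_H\mathcal{S}^k(u)\le k$ for every $k$.
\end{enumerate}
For the second, I would run Federer's dimension reduction in the form of \cite[Theorem 4.5]{FMS25}, adapted to pairs (space, function). Blowups need not be $1$-homogeneous, so at each stage one passes to double blowups at the tip. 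These are $1$-homogeneous by the Weiss formula on $\mathsf{RCD}$ cones \cite[Appendix A]{CZZ22}. The compactness of \cite[Theorem 7.1]{CZZ22} (Lemma \ref{lem:RCD_blowup_prop}) ensures that symmetries persist under limits. With this in hand, the standard covering argument gives $\mathcal{H}^{k+\epsilon}(\mathcal{S}^k(u))=0$.

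\textbf{Classification of $(n-2)$-symmetric blowups.} This is the core step. Suppose $x\in\mathcal{S}(u)$ admits a blowup with $Y\cong\mathbb{R}^{n-2}\times C(Z)$ and $u_x$ invariant along $\mathbb{R}^{n-2}$. By Lemma \ref{lem:restrict_min_is_min}, the section is a global minimizer on the $2$-dimensional noncollapsed $\mathsf{RCD}(0,2)$ cone $C(Z)$. By Remark \ref{rmk:rcd_tang_cone}, $Z$ is a circle of length $L\le 2\pi$ (there is no boundary). Blowing up once more at the tip, and using Remark \ref{rmk:sec_min_min_corresp} to carry the symmetry along, we may assume the section is $1$-homogeneous.

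On each arc of $\{u>0\}\cap Z$, the restriction is a first eigenfunction with eigenvalue $1$, so it equals $Q r\sin\theta$ on an arc of length exactly $\pi$. The free boundary condition holds automatically at the endpoints. Since $L\le2\pi$, there is exactly one positive arc, and the zero arc has length $L-\pi$.

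If $L=2\pi$, the cone is $\mathbb{R}^2$ and the section is linear. The blowup is then $(\mathbb{R}^n,h)$ up to isometry, so the $\epsilon$-regularity \cite[Theorem 8.6]{CZZ22} gives $x\in\mathcal{R}(u)$, a contradiction.

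\textbf{Main obstacle: the case $L<2\pi$.} Here one must show that the zero sector of opening $L-\pi<\pi$ is incompatible with minimality; note that this case genuinely occurs as a singular point of the ambient space. I would try two routes.
\begin{itemize}[topsep=0pt]
    \item \emph{Competitor.} Work on a large ball and build an explicit competitor that makes the function positive across the thin zero sector, for instance by harmonic extension. The aim is an energy saving of order $R^2(\pi-(L-\pi))$, in the spirit of the classical $2$D argument that minimizers have no thin zero cusps.
    \item \emph{Stability.} Alternatively, use the criterion via strict subsolutions of the linearized problem \cite{CJK04} on the sector, where explicit functions of $r$ and $\theta$ are available.
\end{itemize}
If that fails outright, the fallback is to accept singular tips as possible only in $\mathcal{S}(X)$, whose $(n-2)$-dimensional part is excluded by \eqref{eq:ric_lim_strata_n-2=n-1}. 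This forces $\mathbb{R}^{n-2}\times C(Z)$ with $L<2\pi$ to lie in $\mathcal{S}^{n-2}(X)\setminus\mathcal{S}^{n-3}(X)$, and one then argues via the non-degeneracy and density bounds.

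\textbf{Item 2.} Around each point of $\mathcal{R}(u)$, blowups are the half-space solution $h$ on $\mathbb{R}^n$. By compactness, there is a scale-invariant neighborhood on which every rescaling is GH-close to $(\mathbb{R}^n,h)$. In that neighborhood the free boundary is close, in Hausdorff distance, to hyperplanes at all scales, i.e.\ it is Reifenberg flat. Cheeger--Colding's metric Reifenberg theorem \cite{CC97} then yields bi-H\"older charts of the free boundary onto domains of $\mathbb{R}^{n-1}$. The set $\mathcal{O}$ is defined as the union of these neighborhoods, which is relatively open.
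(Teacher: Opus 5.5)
\textbf{The gap: the case $L<2\pi$ is never settled.} Your reduction of Item 1 matches the paper's structure: $\mathcal{S}(u)=\mathcal{S}^{n-3}(u)$ together with the dimension reduction of Lemma~\ref{lem:dim_bd_strata_two_asp}. Your classification of $1$-homogeneous candidates on $C(Z)$ with $|Z|=L$ is also fine. But the step that carries all of the content, ruling out $\pi\le L<2\pi$, is left open, and none of your routes closes it. The paper closes it by quoting Lemma~\ref{lem:FBP_2dim_cone} from \cite{AL15}: on $C(S^1_r)$ with $r<1$ the tip is never a free boundary point.

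Your stability route cannot work. The criterion \eqref{eq:stab_criter} and Corollary~\ref{cor:strict_subsol_unstable} only see $\{u>0\}$ and the free boundary away from the tip. There $\{u>0\}$ is isometric to a Euclidean half-plane with $H\equiv 0$, so \eqref{eq:stab_criter} holds trivially, and a strict subsolution would force $\int|\nabla v|^2<0$. More decisively, any competitor with $\{v>0\}\subset\{u>0\}$ can be transplanted to $\mathbb{R}^2$ and extended by zero, where $Q(x_2)_+$ is minimal. Inward variations therefore never see the cone angle, and any instability must come from pushing the positive phase into the zero sector near the tip.

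Your concrete competitor, filling the whole zero sector of $B_R$ by the harmonic extension $h$, fails for $L$ close to $2\pi$:
\begin{itemize}
\item it saves $D(u)-D(h)=Q^2\pi R^2/2-D(h)$ in Dirichlet energy, with $D(h)$ bounded below uniformly in $L$;
\item it costs $Q^2(L-\pi)R^2/2\to Q^2\pi R^2/2$ in measure.
\end{itemize}
For deformations localized near the tip, the first-order Hadamard term $\int(Q^2-|\nabla u|^2)V\cdot\nu$ vanishes because $|\nabla u|=Q$ on the rays. Whether such a competitor wins is therefore a finer computation, which is exactly the content of \cite{AL15} and which you have not carried out.

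\textbf{Your fallback rests on a misreading.} \eqref{eq:ric_lim_strata_n-2=n-1} excludes $\mathcal{S}^{n-1}(X)\setminus\mathcal{S}^{n-2}(X)$, not $\mathcal{S}^{n-2}(X)\setminus\mathcal{S}^{n-3}(X)$. Points with tangent cone $\mathbb{R}^{n-2}\times C(S^1_r)$, $r<1$, do occur, as in Example~\ref{ex:RCD_dense_codim3}, and they can fill a set of dimension $n-2$. Without the rigidity lemma you only get $\dim_H\mathcal{S}(u)\le n-2$. Nondegeneracy and density arguments cannot replace it either: the candidate $Qr\sin\theta$ is nondegenerate, and its Weiss density $Q^2\pi/2$ equals that of the half-plane solution in $\mathbb{R}^2$.

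\textbf{Smaller points.}
\begin{itemize}
\item You must also discard $L<\pi$: there is no admissible positive arc, which contradicts nondegeneracy.
\item At $L=2\pi$ you must discard the two-arc candidate $Q|x_2|$, which harmonic replacement beats.
\item In Item 2, Reifenberg flatness at all small scales around nearby points does not follow from compactness alone, since $X$ carries no monotonicity formula. You need an $\epsilon$-regularity statement that propagates flatness to smaller scales, such as \cite[Theorem 8.6]{CZZ22}. The paper simply cites \cite{CZZ22} for Item 2.
\end{itemize}
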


By Example \ref{ex:RCD_dense_codim3} below, we see that the dimension bound in Item 1 above is sharp.
It also shows that $\mathcal{S}(u)$ can be dense in $\pt\{u>0\}\cap \Omega$ and, in particular, may not be closed. 
These should be compared to $\dim_H\mathcal{S}(X)\leq n-2$ (see Remark \ref{rmk:rcd_tang_cone}), and to Item 4 in Theorem \ref{thm:Euc_one-phase_fund}.
We remark that the same dimension bound was proved in \cite{MS25} for the singular set of perimeter minimizing sets in noncollapsed $\mathsf{RCD}$ spaces.
In some sense, these bounds say that these $(n-1)$-dimensional objects such as $\pt\{u>0\}\cap \Omega$ tend to avoid $\mathcal{S}(X)$.

\begin{example}[Dense codimension-$3$ singularities in free boundary, {\cite[Remark 1.12]{CZZ22}}]\label{ex:RCD_dense_codim3}
By \cite{otsu1994riemannian}, there exists a two-dimensional Alexandrov space $Y$ with nonnegative curvature and without boundary, such that $\mathcal{S}(Y)$ is dense.
On the bounded domain $(-1,1)\times Y\subset \mathbb{R}\times Y$, one can check that the function $f(t,y):=t_+$ is a minimizer of the one-phase problem with $Q=1$.
Its free boundary is $\{0\}\times Y$, which has dense singularities.
\end{example}

The proof of the dimension bound in Theorem \ref{thm:RCD_free_bdy_reg} followed the same idea as those behind \eqref{eq:Ric_sing_dim_bd} and \eqref{eq:dim_bd_euc_1-phase_sing}, using Federer's dimension reduction (Lemma \ref{lem:dim_bd_strata_two_asp} below) and a rigidity result about minimizers on $2$-dimensional circular cones (Lemma \ref{lem:FBP_2dim_cone} below).
The rigidity result implies that 
\begin{equation*}
    \mathcal{S}(u)=\mathcal{S}^{n-3}(u),
\end{equation*}
while the dimension reduction gives
\begin{lemma}[{\cite[Theorem 8.12]{CZZ22}}]\label{lem:dim_bd_strata_two_asp}
With the same setting as in Theorem \ref{thm:RCD_free_bdy_reg}, we have
    \begin{equation*}
        \dim_H \mathcal{S}^k(u)\leq k.
    \end{equation*}
\end{lemma}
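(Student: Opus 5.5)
We prove $\dim_H \mathcal{S}^k(u)\le k$ by a Federer-type dimension reduction. The main departure from the Euclidean argument is that blowups at $x$ need not be $1$-homogeneous, so cone splitting cannot be run directly on the first blowup. Following \cite[Theorem 4.5]{FMS25}, we work with double blowups, which are homogeneous by the Weiss formula on cones. Concretely, argue by contradiction: suppose $\mathcal{H}^{s}(\mathcal{S}^k(u))>0$ for some $s>k$.

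\textbf{Step 1 (density point and first blowup).} By the standard upper density estimate for Hausdorff measures, there is $x\in\mathcal{S}^k(u)$ with $\limsup_{r\to0} r^{-s}\mathcal{H}^s_\infty(\mathcal{S}^k(u)\cap B_r(x))>0$. Choose $r_j\to0$ realizing this limsup. Using the compactness of \cite[Theorem 7.1]{CZZ22} (Lemma \ref{lem:RCD_blowup_prop}), pass to a blowup $(Y,d_Y,u_x,y)$ with $Y$ a noncollapsed $\mathsf{RCD}(0,n)$ cone.

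\textbf{Step 2 (upper semicontinuity).} Under pmGH convergence with uniform convergence of minimizers, the key claim is the following. If $z_j\in\mathcal{S}^k(u_j)$ converge to $z$, then $z\in\mathcal{S}^k$ of the limit object. Equivalently, if some blowup at $z$ is $(k+1)$-symmetric, then nearby points along the sequence also have $(k+1)$-symmetric blowups. The plan is to prove this by a diagonal argument: rescale at $z_j$ at scales tending to zero, and combine the closedness of splitting under pmGH limits with the Gigli splitting/volume-cone rigidity, and the convergence of minimizers with their free boundaries from \cite{CZZ22}. Upper semicontinuity of $\mathcal{H}^s_\infty$ under Hausdorff convergence of compact sets then gives $\mathcal{H}^s_\infty(\mathcal{S}^k(u_x)\cap \bar B_1(y))>0$.

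\textbf{Step 3 (iterated blowup to a homogeneous object).} Repeat Step 1 at a density point $y'$ of $\mathcal{S}^k(u_x)$ in the cone $Y$, and then once more at the tip of the resulting cone. By the Weiss-type formula on $\mathsf{RCD}$ cones \cite[Appendix A]{CZZ22}, blowing up a global minimizer on a cone at the tip yields a $1$-homogeneous minimizer on that cone. Blowing up at a point $y'\neq$ tip of $\mathbb{R}^m\times C(Z)$ yields a cone with an extra Euclidean factor in the direction of $y'$.

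\textbf{Step 4 (cone splitting and termination).} Combine homogeneity of $u$ with respect to two distinct points to show that $u$ is invariant along the line through them. This is where the Weiss monotonicity with equality at both vertices enters, together with the splitting of the underlying cone. Each iteration at a non-tip density point therefore increases the symmetry of the limiting pair by one, while positive $\mathcal{H}^s_\infty$-measure of $\mathcal{S}^k$ is preserved. Since $s>k$, the set $\mathcal{S}^k$ is not contained in the $k$-dimensional spine, so a non-tip density point off the spine always exists. After finitely many steps (at most $n$) we obtain a $(k+1)$-symmetric homogeneous minimizer. Every point of its spine then has a $(k+1)$-symmetric blowup, contradicting that those points lie in $\mathcal{S}^k$.

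The main obstacle is Step 2 together with the double-blowup bookkeeping. Since first blowups are not homogeneous, we must ensure that membership in $\mathcal{S}^k$ passes to the limit without knowing the rescalings are monotone. I would follow \cite{FMS25} closely, replacing perimeter-minimizing cones by $1$-homogeneous minimizers and the monotonicity of perimeter density by the Weiss formula on cones.
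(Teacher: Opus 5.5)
There is a genuine gap, and it sits exactly at the step you flagged. The claim in Step 2 is false for the non-quantitative strata. Whether $z_j\in\mathcal{S}^k(u_j)$ depends only on the infinitesimal scales at $z_j$, which the limit cannot see. Symmetry is closed under pmGH limits, so non-symmetry is an open condition, and no diagonal argument can transfer it to the limit.

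Here is a concrete counterexample. Let $X=\mathbb{R}\times Y$ with $u=t_+$, where $Y$ is a nonnegatively curved Alexandrov surface whose cone points $x_j$ accumulate at a point $x$ carrying no curvature. Then each $(0,x_j)$ lies in $\mathcal{S}^0(u)$, since its blowup $\mathbb{R}\times C(S^1_{r_j})$ with $r_j<1$ carries $t_+$ and is only $0$-symmetric. After rescaling by $d(x,x_j)$, these points converge to points of $\mathbb{R}^3$, where the blowup $t_+$ is $2$-symmetric, so $\mathcal{S}^0(u_x)=\emptyset$. (Also, $\mathcal{S}^k(u_j)\cap\overline{B_1}$ need not be compact, so Blaschke's theorem does not apply to it.)

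The missing idea is to pass to the quantitative strata $\mathcal{S}^k_\epsilon(u)$: the points at which no ball $B_r(x)$, $r\in(0,1)$, is $(k+1,\epsilon)$-symmetric. Use $\mathcal{S}^k=\bigcup_\epsilon\mathcal{S}^k_\epsilon$ and fix $\overline{\epsilon}$ with $\mathcal{H}^s(\mathcal{S}^k_{\overline{\epsilon}}(u))>0$. Since $(k+1,\epsilon)$-symmetry at a fixed scale is an open condition, limits of points of $\mathcal{S}^k_{\overline{\epsilon}}(u_j)$ lie in $\mathcal{S}^k_{\overline{\epsilon}}$ of the limit. This is what the paper does, and your final contradiction in Step 4 ("spine points lie in $\mathcal{S}^k$") also depends on it.

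Steps 3 and 4 have two further problems.

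\emph{Blowing up at the tip loses the measure bound.} Blowing up again at the tip of the tangent cone at $y'$ preserves positive measure only if the tip is itself a density point. You only know $\mathcal{H}^s_\infty(\mathcal{S}^k\cap B_1(\text{tip}))\ge 2^{-s}$, which says nothing about smaller scales. The paper instead seeks a density point that is a \emph{vertex} of the cone. If none exists, it blows up at a non-vertex density point, which raises the dimension of the vertex set, and iterates. Only at a density vertex $O$ does it apply the Weiss formula on cones to get a $1$-homogeneous $v$.

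\emph{There is no Weiss formula at the second point.} In Step 4 the point $P$ is generally not a vertex of $C(Z)$. So no Weiss formula holds at $P$, and neither $v$ nor its blowup $w$ at $P$ is known to be homogeneous about $P$. ``Equality at both vertices'' is not available. What suffices is homogeneity about $O$ alone. The paper shows that $f_j=r_j\bigl(d_j^2(O,\cdot)-d_j^2(O,P)\bigr)$ converge to a splitting function $g$ on $C(Z')$. It then passes the identity $\tfrac12\nabla v_j\cdot\nabla f_j=r_jv_j$ to the limit, obtaining $\nabla w\cdot\nabla g=0$, so $w$ is invariant along the split line.

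Because $w$ is not homogeneous, you cannot simply keep iterating ``at non-tip density points'' until you reach a $(k+1)$-symmetric object; each round would need the density-vertex bookkeeping again. The paper sidesteps this. It restricts $w$ to the cross-section $C(Z'')$, where $\tilde w$ is again a minimizer and $\mathcal{H}^{s-1}(\mathcal{S}^{k-1}_{\overline{\epsilon}}(\tilde w))>0$. It then inducts on $k$ down to $k=0$, where $P\in\mathcal{S}^0_{\overline{\epsilon}}(v)$ having a blowup that splits an invariant line is an immediate contradiction.
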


As we mentioned after Lemma \ref{lem:RCD_blowup_prop}, it is unknown whether the blowups are $1$-homogeneous on the corresponding tangent cone.
This creates issues for the classical dimension reduction, which we address in Appendix \ref{apndx:dim_bound_strata}.
\par
We end this subsection with the aforementioned rigidity result:
\begin{lemma}[{\cite{AL15}}]\label{lem:FBP_2dim_cone}
    Let $u$ be a global minimizer of the one-phase problem on the $2$-dimensional circular cone $C(S^1_r)$ with constant $Q$, where $S^1_r$ is a circle of radius $r\leq 1$.
    If $r<1$, then the tip is not in the free boundary.
\end{lemma}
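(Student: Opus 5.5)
The plan is to reduce to the $1$-homogeneous case and then exploit the explicit structure of harmonic functions on the two-dimensional cone. Let $u$ be a global minimizer on $C(S^1_r)$ with $r<1$, and suppose by contradiction that the tip $p$ lies in $\pt\{u>0\}$.

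\textbf{Step 1: homogeneous blowup.} Blow up at $p$, using the Weiss-type monotonicity formula on $\mathsf{RCD}$ cones \cite[Appendix A]{CZZ22}. Any blowup of a metric cone at its tip is the cone itself, so Lemma \ref{lem:RCD_blowup_prop} applies. We obtain a $1$-homogeneous global minimizer $u_0$ on $C(S^1_r)$ with $p\in\pt\{u_0>0\}$. Non-degeneracy (Theorem \ref{thm:RCD_FBP_fund}) guarantees that $u_0\not\equiv 0$. It therefore suffices to rule out nontrivial $1$-homogeneous minimizers on $C(S^1_r)$ with $r<1$.

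\textbf{Step 2: classification of homogeneous solutions.} Write $u_0(\rho,\theta)=\rho\, g(\theta)$ with $\theta\in S^1_r$, a circle of length $2\pi r<2\pi$. Harmonicity of $u_0$ on the cone metric $d\rho^2+\rho^2 d\theta^2$ forces $g''+g=0$ on each component (arc) of $\{g>0\}$. Hence on such an arc $g=c\sin(\theta-\theta_0)$, and the arc has length exactly $\pi$. Since $\{u_0>0\}\neq \emptyset$ and $u_0(p)=0$, the set $\{g>0\}$ is a nonempty proper union of arcs, each of length $\pi$.
\begin{itemize}
\item If $r\le 1/2$, the total length $2\pi r\le\pi$ leaves no room for a proper arc of length $\pi$. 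The only possibility is $\{g>0\}$ being the whole circle, which would make $g$ a positive $2\pi r$-periodic solution of $g''+g=0$. This is impossible, since $2\pi r$ is not a multiple of $2\pi$ and such solutions change sign.
\item If $1/2<r<1$, there is room for exactly one arc of length $\pi$, with a zero-set arc of length $2\pi r-\pi<\pi$.
\end{itemize}
The free boundary condition $|\nabla u_0|=Q$ on both rays forces $c=Q$.

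\textbf{Step 3: instability of the remaining configuration.} This is the main obstacle: showing that the single-arc solution $u_0=Q\rho\sin(\theta-\theta_0)^+$ is not a minimizer when $r<1$. Locally away from $p$ it looks exactly like a half-plane solution, so the contradiction must come from a global competitor near the tip.

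I would first try a domain-variation or comparison argument. Cutting out a neighborhood $B_R(p)$, one can replace $u_0$ inside by the harmonic extension with a slightly enlarged positivity set that "wraps" across the short zero arc, whose angular length $2\pi r-\pi$ is less than $\pi$. The energy change can be computed from the explicit Fourier modes on $S^1_r$. On an interval of angular length $L>\pi$, the first Dirichlet eigenvalue is $(\pi/L)^2<1$, so positive harmonic functions grow like $\rho^{\pi/L}$, slower than linearly. This sublinear growth should produce a competitor with strictly smaller Alt--Caffarelli energy at scale $R$, contradicting minimality.

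Alternatively, one can argue as in the Euclidean $2$-dimensional classification: the free boundary of a minimizer near $p$ consists of two rays bounding an empty region. A viscosity/Hopf-type comparison on the zero sector with a barrier $\rho^{\alpha}\cos(\alpha(\theta-\theta_1))$ then shows that this region cannot have opening angle less than $\pi$ without violating the free boundary condition.

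Once this step is done, no homogeneous minimizer has the tip on its free boundary, and Steps 1 and 2 conclude the proof.
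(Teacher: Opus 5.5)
The paper does not prove this lemma; it quotes it from \cite{AL15}. Your Steps 1 and 2 are a sound reduction: blow up at the tip, use the Weiss formula on cones to get a $1$-homogeneous minimizer, then solve $g''+g=0$ on the link. The only surviving candidate is $Q\rho\sin^+(\theta-\theta_0)$, with a zero arc of length $2\pi r-\pi\in(0,\pi)$. There is one slip, at $r=1/2$. There an arc of length $\pi$ is the circle minus a point, so a candidate exists whose zero set is a single ray (locally $Q|x_2|$). It has to be excluded separately, e.g.\ by the Alt--Caffarelli density estimate for $\{u=0\}$ in the flat region away from $p$. The genuine gap is Step 3, which is the entire content of the lemma and is not proved. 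Neither mechanism you suggest works as stated.

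\textbf{Sector competitors cannot work.} Take any competitor $v$ with $v=u_0$ off $B_R(p)$ whose zero set contains a whole ray $\{\theta=\theta^*\}$ from $p$. Such a $v$ cannot beat $u_0$. Cut $C(S^1_r)$ along that ray, develop it isometrically onto a Euclidean sector of angle $2\pi r<2\pi$, and extend by zero. Then $u_0$ becomes the half-plane solution $Qx_2^+$, and $v$ becomes an admissible competitor in $\mathbb{R}^2$ with the same energy. Since $Qx_2^+$ is a minimizer in $\mathbb{R}^2$, $E(v)\geq E(u_0)$. This rules out exactly the competitors your eigenvalue heuristic describes, namely positivity sets that are angular intervals of length $L<2\pi r$. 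The sublinear growth $\rho^{\pi/L}$ also occurs in the flat plane and yields no gain there.

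\textbf{The barrier idea has no content.} $u_0\equiv 0$ on the zero sector, so there is nothing to compare against. No ball fits at $p$ inside the convex zero sector. From the positive side, which is isometric to a half-plane, the viscosity condition holds with slope exactly $Q$.

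\textbf{Smooth perturbations see nothing.} Because the free-boundary rays are straight ($H=0$), the stability test of Lemma \ref{lem:neccessary_cond_stab_sym} is trivially satisfied. So $u_0$ is stable under free-boundary perturbations supported away from $p$.

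\textbf{What is missing.} You need a competitor whose positive phase \emph{surrounds} the tip, so that the angle deficit $2\pi(1-r)$ is actually felt, together with a quantitative energy estimate. One workable route is the following.
\begin{enumerate}
\item Push both free-boundary rays a distance $h$ into the convex zero sector near $p$, so they become parallel rays meeting at an interior point and the new positive phase contains $p$.
\item A conformal-map (Gauss--Bonnet) computation shows that the harmonic function on this domain, with slope $Q$ at infinity, has $|\nabla v|>Q$ along its whole free boundary. This gives an energy decrease of order $h^2$ localized near $p$.
\item Matching back to $u_0$ with a logarithmic cutoff between radii $R_0\gg h$ and $R$ costs only $O(h^2/\log(R/R_0))$ in two dimensions.
\end{enumerate}
A crude surrounding competitor, such as filling all of $B_R(p)$, does beat $u_0$ when $2\pi r$ is near $\pi$, but it fails as $r\to 1$. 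So some scale-separated argument of this kind, or an appeal to \cite{AL15} as the paper makes, is required. As written, the proposal does not establish the lemma.
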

We remark that by \cite{KL16}, any noncollapsed $\mathsf{RCD}(0,2)$ cones (without boundary) are exactly these circular cones $C(S^1_r)$.

\section{Stability Under Symmetric Variation}\label{sec:stab_under_sym_var}

Our strategy to prove the main theorems follows \cite{CF24} by analyzing the blowups of minimizers at points in the singular strata.
Based on Lemma \ref{lem:RCD_blowup_prop} and \ref{lem:restrict_min_is_min}, it suffices to consider global minimizers of the one-phase problem with constant $Q$ on certain metric cones.
Such cones, in our case, arise as symmetric quotients of the Euclidean space (Theorem \ref{thm:twosided_ric_n-4_rig}).
This motivates the consideration of the one-phase problem in a symmetric setting.
\par
In Subsection \ref{subsec:sym_obj}, we introduce some terminology and basic lemmas for our symmetric setting, which are modified from \cite{CF24}.
These relate global minimizers on the metric cones to symmetric minimizers on $\mathbb{R}^n$ under symmetric variations, for a general dimension $n\geq 3$.
In Subsection \ref{subsec:cond_stab_sym_var}, we prove several necessary conditions for stability under symmetric variations on $\mathbb{R}^n$, which are modified from \cite{CJK04, JS15}.
The case $n=4$ will be used in the next section to show that any stable symmetric $1$-homogeneous solution on $\mathbb{R}^4$ under symmetric variations is a linear solution \eqref{eq:linear_sol}. 

\subsection{Symmetry Tools}\label{subsec:sym_obj}
Let us fix some notations.
In this section, $n\in \mathbb{N}$, $n\geq 3$, and  $\Gamma\subset O(n)$ is a discrete group of isometries acting freely on $S^{n-1}$ and $\mathbb{R}^n\backslash \{0\}$, with $|\Gamma|=l$.
We denote by $\pi : \mathbb{R}^n \to \mathbb{R}^n/ \Gamma$ the projection map.
Then $\pi|_{\mathbb{R}^n \backslash \{0\}}$ is a covering map of $(\mathbb{R}^n\backslash\{0\})/ \Gamma$, and hence a local isometry.
The setup here is general, although we will only use the case $n=4$ for our main theorems.
Notice that the space $\mathbb{R}/\Gamma\cong C(S^{n-1}/\Gamma)$ has a natural metric cone structure and connects to Theorem \ref{thm:twosided_ric_n-4_rig}.
Throughout this subsection, we only consider the one-phase problems with constant $Q=1$.
\par
\begin{definition}[Cover chart]
    We say that an open set $U \subset (\mathbb{R}^n \backslash\{0\})/ \Gamma$ is a cover chart if its preimage through
$\pi$ is a finite union of disjoint open sets $\{U_i\}^l_{i=1}$ such
that $\pi|_{U_i} : U_i \to U$ is a bijective isometry for every $i$.
\end{definition}

\begin{definition}[Group action of functions and sets]
    Given a function $u$ on $\mathbb{R}^n$ and
$g \in \Gamma$, we denote $(g \cdot  u)(x) = u(g\cdot x)$.
Given a subset $A\subset \mathbb{R}^n$ and $g\in \Gamma$, we denote $g\cdot A=\{g\cdot x\mid x\in A\}$.
\end{definition}

\begin{definition}[$\Gamma$-symmetric functions and sets]
    We say that a function $u$ on $\mathbb{R}^n$ is $\Gamma$-symmetric if for every $g\in \Gamma$, we have $g \cdot u = u$.
    Similarly, we say that a subset $A\subset \mathbb{R}^n$ is $\Gamma$-symmetric if for every $g\in \Gamma$, we have $g \cdot A = A$.
\end{definition}

The following lemma shows that $\Gamma$-symmetric functions on $\mathbb{R}^n$ arise exactly as compositions with $\pi$ of functions on $\mathbb{R}^n/\Gamma$.
\begin{lemma}\label{lem:uni_prop_quotient}
    If a function $u$ on $\mathbb{R}^n$ is $\Gamma$-symmetric, then there is a function $\tilde{u}$ on $\mathbb{R}^n/\Gamma$ such that $u=\tilde{u}\circ \pi$.
    Conversely, if $\tilde{u}$ is a function on $\mathbb{R}^n/\Gamma$, then $u:=\tilde{u}\circ \pi$ is $\Gamma$-symmetric.
\end{lemma}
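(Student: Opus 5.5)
The proof is the universal property of the quotient map $\pi:\mathbb{R}^n\to\mathbb{R}^n/\Gamma$: a function factors through $\pi$ exactly when it is constant on the fibres of $\pi$, and these fibres are precisely the $\Gamma$-orbits $\Gamma\cdot x=\{g\cdot x\mid g\in\Gamma\}$. Here $\mathbb{R}^n/\Gamma$ is, by definition, the set of orbits with $\pi(x)=\Gamma\cdot x$. The statement concerns plain functions with no regularity, so no topology or metric is needed and the argument is purely set-theoretic.

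\emph{First direction.} Let $u$ be $\Gamma$-symmetric. For a point $\xi\in\mathbb{R}^n/\Gamma$, choose any $x\in\mathbb{R}^n$ with $\pi(x)=\xi$ and set $\tilde u(\xi):=u(x)$. This is well defined. If $\pi(x)=\pi(x')$, then $x'=g\cdot x$ for some $g\in\Gamma$. Symmetry then gives $u(x')=u(g\cdot x)=(g\cdot u)(x)=u(x)$. The identity $u=\tilde u\circ\pi$ now holds by construction.

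\emph{Converse.} Let $u:=\tilde u\circ\pi$. For any $g\in\Gamma$ and $x\in\mathbb{R}^n$, the points $g\cdot x$ and $x$ lie in the same orbit, so $\pi(g\cdot x)=\pi(x)$. Hence $(g\cdot u)(x)=\tilde u(\pi(g\cdot x))=\tilde u(\pi(x))=u(x)$, and $u$ is $\Gamma$-symmetric.

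I expect no real obstacle. The one point to keep explicit is that well-definedness of $\tilde u$ uses exactly the fact that the fibres of $\pi$ are the $\Gamma$-orbits. It would also be worth remarking, for later use, that when $u$ is continuous (respectively locally Lipschitz or $W^{1,2}_{\text{loc}}$) on $\mathbb{R}^n\setminus\{0\}$, the function $\tilde u$ inherits the same property on $(\mathbb{R}^n\setminus\{0\})/\Gamma$. This follows because $\pi$ restricted to each sheet of a cover chart is a bijective isometry, so $\tilde u|_U=u\circ(\pi|_{U_i})^{-1}$ locally.
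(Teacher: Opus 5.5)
Your proposal is correct and takes the same approach as the paper, whose proof is the one-line appeal to the universal property of quotient spaces. You have simply written out that property explicitly: the fibres of $\pi$ are the $\Gamma$-orbits, so $\tilde u$ is well defined exactly when $u$ is $\Gamma$-symmetric.
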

\begin{proof}
    This follows from the universal property of quotient spaces.
\end{proof}


\begin{definition}[$\Gamma$-symmetric minimizers against $\Gamma$-symmetric competitors]
    We say a $\Gamma$-symmetric nonnegative $u\in W^{1,2}_{\text{loc}}(\mathbb{R}^n)$ is a minimizer of the one-phase problem \eqref{1-phase_Ber} against $\Gamma$-symmetric competitors if for any $R>0$ and any $\Gamma$-symmetric $v\in W^{1,2}_{\text{loc}}(\mathbb{R}^n)$ such that $u=v$ outside $B_R(0)$, we have
    \begin{equation*}
        E(u, B_R(0))\leq E(v, B_R(0)).
    \end{equation*}
\end{definition}
\par
The following key lemma allows us to compare the value of $E$ of functions on $\mathbb{R}^n/ \Gamma$ with that of their compositions with the projection map $\pi$.
\begin{lemma}\label{lem:value_E_pullback}
    Let $\tilde{u}\in W^{1,2}_{\text{loc}}(\mathbb{R}^n/\Gamma, \mathcal{H}^n)$.
    Then for any bounded measurable subset $U\subset \mathbb{R}^n/\Gamma$, we have
    \begin{equation*}
        lE(\tilde{u},U)=E(\tilde u\circ \pi, \pi^{-1}(U)),
    \end{equation*}
    where $l=|\Gamma|$.
\end{lemma}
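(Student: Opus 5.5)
We have to show that $lE(\tilde u,U)=E(\tilde u\circ\pi,\pi^{-1}(U))$, i.e.
\[
l\int_U \bigl(|\nabla\tilde u|^2+1_{\tilde u>0}\bigr)\dd\mathcal H^n=\int_{\pi^{-1}(U)}\bigl(|\nabla(\tilde u\circ\pi)|^2+1_{\tilde u\circ\pi>0}\bigr)\dd\mathcal H^n .
\]
The strategy is to localize to cover charts, where $\pi$ is an $l$-sheeted isometric covering, and then patch the pieces together.

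\textbf{Step 1 (removing the tip).} Let $p=\pi(0)$ be the tip. A single point is $\mathcal H^n$-null on both sides, since $n\ge 3$. Hence we may replace $U$ by $U\setminus\{p\}$ and $\pi^{-1}(U)$ by $\pi^{-1}(U)\setminus\{0\}$ without changing either integral. Here we use that $|\nabla\cdot|^2$ is the minimal weak upper gradient (equivalently, in the $\mathsf{RCD}$ sense), so it is an $L^1_{\rm loc}$ density and does not charge null sets.

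\textbf{Step 2 (cover by charts).} The set $(\mathbb R^n\setminus\{0\})/\Gamma$ is covered by cover charts, because $\Gamma$ is finite and acts freely. A small ball $B_r(\pi(x))$ with $r$ less than half of $\min_{g\neq \id}|gx-x|$ works. Since the space is second countable, we can extract countably many charts $V_k$. Setting $W_k=V_k\setminus\bigcup_{j<k}V_j$ gives a disjoint Borel partition of $U\setminus\{p\}$ into the pieces $U\cap W_k$. The preimages $\pi^{-1}(U\cap W_k)$ then partition $\pi^{-1}(U)\setminus\{0\}$. Each preimage is the disjoint union of $l$ sets $(\pi|_{V_{k,i}})^{-1}(U\cap W_k)$, one in each sheet $V_{k,i}$.

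\textbf{Step 3 (local identity).} On a single sheet $V_{k,i}$, the map $\phi=\pi|_{V_{k,i}}$ is a bijective isometry onto the open set $V_k$. An isometry preserves $\mathcal H^n$. It also preserves Sobolev functions and the pointwise norm of their gradients, because minimal weak upper gradients are defined purely metrically and measure-theoretically and are local on open sets. Therefore $\tilde u\circ\phi\in W^{1,2}_{\rm loc}(V_{k,i})$, with
\[
|\nabla(\tilde u\circ\phi)|=|\nabla\tilde u|\circ\phi \quad \text{a.e.},
\]
and $1_{\tilde u\circ\phi>0}=1_{\tilde u>0}\circ\phi$. Changing variables gives
\[
\int_{\phi^{-1}(U\cap W_k)}\bigl(|\nabla(\tilde u\circ\pi)|^2+1_{\tilde u\circ\pi>0}\bigr)=\int_{U\cap W_k}\bigl(|\nabla\tilde u|^2+1_{\tilde u>0}\bigr).
\]

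\textbf{Step 4 (summation).} Summing over the $l$ sheets and then over $k$ proves the identity, using monotone convergence since the integrands are nonnegative. This requires knowing that $\tilde u\circ\pi\in W^{1,2}_{\rm loc}(\mathbb R^n)$ and that its gradient on $\mathbb R^n\setminus\{0\}$ coincides with the locally computed one.

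\textbf{Main obstacle.} The delicate point is exactly this last requirement: extending the Sobolev property across the origin. Away from $0$ it follows from Step 3 by locality. To cross the origin, I would use that a point has zero $2$-capacity in $\mathbb R^n$ for $n\ge 2$. Concretely, take cutoffs $\eta_\epsilon$ with $\eta_\epsilon=0$ near $0$, $\eta_\epsilon=1$ outside $B_{2\epsilon}$, and $\|\nabla\eta_\epsilon\|_{L^2}\to0$. Then $\eta_\epsilon(\tilde u\circ\pi)$ converges in $W^{1,2}_{\rm loc}$, at least after truncating $\tilde u$ at level $M$ and letting $M\to\infty$. This shows $\tilde u\circ\pi\in W^{1,2}_{\rm loc}(\mathbb R^n)$ with the expected gradient. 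The same capacity argument, applied on the quotient cone with its $\mathcal H^n$ volume growth, shows that the tip does not affect $\tilde u$ either.
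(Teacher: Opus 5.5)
Your proposal is correct and follows essentially the paper's argument: a countable disjointified family of cover charts, a sheet-by-sheet isometric change of variables, and summation over the $l$ sheets and the pieces, with the tip discarded as negligible (the paper phrases this as $E(\tilde u,B_\epsilon(0)/\Gamma),\,E(\tilde u\circ\pi,B_\epsilon(0))\to 0$ rather than as removing an $\mathcal H^n$-null point). Your zero-capacity argument for $\tilde u\circ\pi\in W^{1,2}_{\mathrm{loc}}(\mathbb R^n)$ makes explicit a point the paper takes for granted; one small correction is that a radius $<\tfrac12\min_{g\neq\id}|gx-x|$ only makes the sheet maps bijective local isometries (a radius $<\tfrac14\min_{g\neq\id}|gx-x|$ gives genuinely distance-preserving maps), but local isometry is all your Step 3 actually uses.
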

\begin{proof}
    Following the construction in \cite[Lemma 3.4]{CF24}, let $\{A_i\}_{i\in \mathbb{N}}$ be a collection of cover charts covering $(\mathbb{R}^n \backslash \{0\})/ \Gamma$.
    Such a collection exists since every point has a neighborhood that is a cover chart.
    Define $B_1 := A_1$ and $B_{i+1} := A_{i+1}\backslash \cup^i_{j=1} A_j$.
    Then $\{B_i\}_{i\in \mathbb{N}}$ is a countable collection of disjoint measurable subsets which covers $(R^n \backslash \{0\})/ \Gamma$, and each set $B_i$ is contained in a cover chart of $(R^n\backslash\{0\})/ \Gamma$. 
    \par
    For every $i$, we write the preimage $\pi^{-1}(A_i)$ as a disjoint union $\sqcup _{j=1}^l A^j_i$ such that for every integer $1 \leq j \leq l$ the restriction
    \begin{equation*}
        \pi_{A^j_i}:A^j_i\to A_i
    \end{equation*}
is a bijective isometry.
Hence we have
\begin{equation*}
    E(\tilde{u}, B_i\cap U)=E(\tilde{u}\circ \pi, (\pi_{A^j_i})^{-1}(B_i\cap U))
\end{equation*}
for every $j = 1, \ldots,l$.
Summing over $j$ we then get that for every $i$ it holds
\begin{equation*}
    l E(\tilde{u}, B_i\cap U)=E(\tilde{u}\circ \pi, \pi^{-1}(B_i\cap U)).
\end{equation*}
Now, the collection $\{\pi^{-1}(B_i)\}_{i\in \mathbb{N}}$ is also a disjoint cover of $\mathbb{R}^n \backslash \{0\}$.
Together with the fact that $E(\tilde{u},B_\epsilon(0)/\Gamma),E(\tilde{u}\circ \pi,B_\epsilon(0)) \to 0$ as $\epsilon\to 0$, we obtain
\begin{equation*}
        lE(\tilde{u},U)=\sum_{i\in \mathbb{N}} lE(\tilde{u}, B_i\cap U)
    =\sum_{i\in \mathbb{N}} E(\tilde{u}\circ \pi, \pi^{-1}(B_i\cap U))
    =E(\tilde{u}\circ \pi, \pi^{-1}(U)).
\end{equation*}
This finishes the proof.
\end{proof}

The next lemma is the main goal of this subsection. 
It shows the correspondence between global minimizers of the one-phase problem on $\mathbb{R}^n/ \Gamma$ and $\Gamma$-symmetric minimizers against $\Gamma$-symmetric
competitors on $\mathbb{R}^n$.
\begin{lemma}\label{lem:corresp_sym_min_to_min_on_cone}
    Let $\tilde{u}$ be a global minimizer of the one-phase problem \eqref{1-phase_Ber} on $\mathbb{R}^n/ \Gamma$.
    Then $\tilde{u}\circ \pi$ is a $\Gamma$-symmetric minimizer against $\Gamma$-symmetric competitors on $\mathbb{R}^n$.
Conversely, if $u$ is a $\Gamma$-symmetric minimizer against $\Gamma$-symmetric competitors on $\mathbb{R}^n$, then the $\tilde{u}$ from Lemma \ref{lem:uni_prop_quotient} such that $u=\tilde{u}\circ \pi$ is a global minimizer of the one-phase problem on $\mathbb{R}^n/ \Gamma$.
\end{lemma}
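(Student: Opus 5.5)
The plan is to transfer competitors between $\mathbb{R}^n/\Gamma$ and $\Gamma$-symmetric functions on $\mathbb{R}^n$, using Lemma \ref{lem:uni_prop_quotient} and the energy identity of Lemma \ref{lem:value_E_pullback}. The only extra ingredients are two facts: the balls centered at the tip correspond, since $\pi^{-1}(B_R(\pi(0)))=B_R(0)$; and Sobolev regularity passes through $\pi$.

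\textbf{Preliminary facts.} First, $\pi$ is $1$-Lipschitz, and $d(\pi(x),\pi(0))=|x|$ because $\Gamma\subset O(n)$ fixes $0$. Hence $\pi^{-1}(B_R(\pi(0)))=B_R(0)$, and both sets are bounded.

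Second, I check that $\tilde v\in W^{1,2}_{\text{loc}}(\mathbb{R}^n/\Gamma)$ if and only if $\tilde v\circ\pi\in W^{1,2}_{\text{loc}}(\mathbb{R}^n)$. Away from the tip this is immediate from the cover charts, since $\pi$ is a local isometry there and the minimal weak upper gradients agree. Near the tip, the point is a set of zero $2$-capacity when $n\geq 3$ (indeed when $n\geq 2$), on both sides. So a function that is $W^{1,2}$ on a punctured neighbourhood, with finite energy, extends across the point with the same gradient. The finiteness of the energy is exactly Lemma \ref{lem:value_E_pullback} applied to the Dirichlet part, which also shows that the gradient norms correspond.

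I expect this step to be the main obstacle, mainly because one needs the identification of Sobolev spaces on the $\mathsf{RCD}$ cone with the local picture. I would argue via locality of minimal weak upper gradients together with a cutoff $\eta_\epsilon$ around the tip with $\int|\nabla\eta_\epsilon|^2\to0$, which exists in dimension $n\ge3$.

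\textbf{Forward direction.} Let $\tilde u$ be a global minimizer and $u=\tilde u\circ\pi$. Take $R>0$ and a $\Gamma$-symmetric $v\in W^{1,2}_{\text{loc}}$ with $v=u$ outside $B_R(0)$.
\begin{enumerate}[topsep=0pt]
\item By Lemma \ref{lem:uni_prop_quotient}, write $v=\tilde v\circ\pi$.
\item By the preliminary facts, $\tilde v\in W^{1,2}_{\text{loc}}$ and $\tilde v=\tilde u$ outside $B_R(\pi(0))$.
\item Minimality of $\tilde u$ gives $E(\tilde u,B_R(\pi(0)))\le E(\tilde v,B_R(\pi(0)))$.
\item Multiply by $l$ and apply Lemma \ref{lem:value_E_pullback} with $U=B_R(\pi(0))$ to get $E(u,B_R(0))\le E(v,B_R(0))$.
\end{enumerate}
If $v$ is allowed to take negative values, I first replace it by $v^+$. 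This only lowers $E$ and preserves symmetry, and $u$ is nonnegative.

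\textbf{Converse direction.} Global minimality on $\mathbb{R}^n/\Gamma$ concerns arbitrary bounded subdomains $\Sigma$, with competitors $\tilde v$ such that $\tilde v-\tilde u\in W^{1,2}_0(\Sigma)$.
\begin{enumerate}[topsep=0pt]
\item Choose $R$ with $\Sigma\subset B_R(\pi(0))$. Extending $\tilde v$ by $\tilde u$, the competitor agrees with $\tilde u$ outside $\Sigma$, so it suffices to compare energies on $B_R(\pi(0))$. The contributions on $B_R\setminus\Sigma$ cancel.
\item Set $v=\tilde v\circ\pi$. It is $\Gamma$-symmetric, lies in $W^{1,2}_{\text{loc}}$, and equals $u$ outside $B_R(0)$.
\item Symmetric minimality gives $E(u,B_R(0))\le E(v,B_R(0))$.
\item Lemma \ref{lem:value_E_pullback} converts this back into $lE(\tilde u,B_R(\pi(0)))\le lE(\tilde v,B_R(\pi(0)))$.
\end{enumerate}
Dividing by $l$ yields the minimality of $\tilde u$ on $\Sigma$.
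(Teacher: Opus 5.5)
Your proposal is correct and follows the paper's route. You write the symmetric competitor as $\tilde v\circ\pi$ via Lemma \ref{lem:uni_prop_quotient}, use $\pi^{-1}(B_R(\pi(0)))=B_R(0)$, and transfer the energy inequality through the identity $lE(\tilde u,U)=E(\tilde u\circ\pi,\pi^{-1}(U))$ of Lemma \ref{lem:value_E_pullback}, with a factor of $l$ where the paper's display has $l^{-1}$; that slip is harmless for the inequality. Your additions, namely the $W^{1,2}_{\text{loc}}$ correspondence via removability of the tip and the reduction from arbitrary bounded $\Sigma$ to balls in the converse, fill in details the paper leaves implicit.
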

\begin{proof}
We here prove the first statement.
    $\tilde{u}\circ \pi$ is $\Gamma$-symmetric by Lemma \ref{lem:uni_prop_quotient}. 
    For any $R > 0$ and any $\Gamma$-symmetric function $v\in W^{1,2}_{\text{loc}}(\mathbb{R}^n)$ such that $v=\tilde{u}\circ \pi$ outside $B_R(0)$, by Lemma \ref{lem:value_E_pullback} we get
     \begin{equation*}
         E(\tilde{u}\circ \pi, B_R(0))=l^{-1}E(\tilde{u}, B_R(0)/\Gamma)
         \leq l^{-1}E(\tilde{v} , B_R(0)/\Gamma)=E(v, B_R(0)),
     \end{equation*}
     where $\tilde{v}$ is the function on $\mathbb{R}^n/ \Gamma$ from Lemma \ref{lem:uni_prop_quotient} such that $v=\tilde{v}\circ \pi$. 
     We hence conclude that $\tilde{u}\circ \pi$ is a $\Gamma$-symmetric minimizer against $\Gamma$-symmetric competitors on $\mathbb{R}^n$.
     The converse statement can be proved in a similar way.
\end{proof}
We end this subsection with the following proposition.
It shows that symmetric minimizers enjoy the same regularity results as the usual minimizers.

\begin{proposition}\label{prop:sym_min_prop}
    Let $u$ be a $\Gamma$-symmetric minimizer against $\Gamma$-symmetric competitors on $\mathbb{R}^n$.
    Then $u$ is a spatially-local minimizer on $\mathbb{R}^n\backslash\{0\}$, in the sense that for any $x\in \mathbb{R}^n\backslash\{0\}$, there exists $r>0$ such that $u$ is a minimizer on $B_r(x)$.
    In particular, the regularity results in Theorem \ref{thm:Euc_one-phase_fund} and the dimension bound \eqref{eq:dim_bd_euc_1-phase_sing} hold for $u$.
\end{proposition}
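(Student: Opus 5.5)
The plan is to show that, near any point $x\neq 0$, every competitor supported in a small ball around $x$ can be symmetrized into a $\Gamma$-symmetric competitor whose energy is exactly $l$ times the original. Once that is done, the regularity statements follow from the Euclidean theory applied in small balls.

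Fix $x\in\mathbb{R}^n\backslash\{0\}$. Because $\Gamma$ is finite and acts freely on $\mathbb{R}^n\backslash\{0\}$, the orbit $\{g\cdot x\}_{g\in\Gamma}$ consists of $l$ distinct points. Let $r$ be less than one third of the minimum of $|x|$ and $\min_{g\neq \id}|g\cdot x-x|$. Since each $g$ is an isometry fixing $0$, the balls $B_r(g\cdot x)=g\cdot B_r(x)$ are then pairwise disjoint and avoid $0$; this is precisely a cover chart around $\pi(x)$.

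Now let $v\in W^{1,2}(B_r(x))$ with $v-u\in W^{1,2}_0(B_r(x))$. Define $w$ to equal $v\circ g^{-1}$ on $g\cdot B_r(x)$ for each $g\in\Gamma$, and to equal $u$ elsewhere. We check three things about $w$.

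\begin{enumerate}[topsep=0pt]
\item $w$ is well-defined, because the balls $g\cdot B_r(x)$ are disjoint.
\item $w\in W^{1,2}_{\text{loc}}$. Indeed, $u$ is $\Gamma$-symmetric, so $u\circ g^{-1}=u$, and hence $v\circ g^{-1}-u$ lies in $W^{1,2}_0(g\cdot B_r(x))$. Gluing therefore produces no jump across the boundaries.
\item $w$ is $\Gamma$-symmetric, since the construction is $\Gamma$-equivariant.
\end{enumerate}

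Next take $R$ with $B_R(0)\supset B_r(x)$; then $w=u$ outside $B_R(0)$. Since $g$ is a measure-preserving isometry, $E(v\circ g^{-1},g\cdot B_r(x))=E(v,B_r(x))$. The same holds for $u$ by symmetry. Hence
\begin{equation*}
E(w,B_R(0))-E(u,B_R(0))=l\bigl(E(v,B_r(x))-E(u,B_r(x))\bigr).
\end{equation*}
Symmetric minimality gives that the left-hand side is $\geq 0$, so $E(u,B_r(x))\leq E(v,B_r(x))$. Thus $u$ is a minimizer on $B_r(x)$.

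For the "in particular" part: Lipschitz continuity, non-degeneracy, the Euler--Lagrange equation, and $C^{1,\alpha}$ regularity off a closed singular set in Theorem \ref{thm:Euc_one-phase_fund} are all local statements. Each therefore holds on every small ball $B_{r}(x)$ with $x\neq 0$. The dimension bound \eqref{eq:dim_bd_euc_1-phase_sing} is also local, since Hausdorff dimension is countably stable and $\mathbb{R}^n\backslash\{0\}$ is covered by countably many such balls. So $\dim_H(\mathcal{S}(u)\backslash\{0\})\leq n-k^*$, and adding the single point $0$ does not change the Hausdorff dimension (for $n-k^*\geq 0$; otherwise the statement is read on $\mathbb{R}^n\backslash\{0\}$).

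The only delicate step is the $W^{1,2}$ gluing across $\pt(g\cdot B_r(x))$. It is handled by the trace-zero condition $v-u\in W^{1,2}_0$ together with the symmetry of $u$, which makes each translated piece agree with $u$ at the boundary.
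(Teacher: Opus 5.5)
Your proof is correct, and it reaches the conclusion more directly than the paper does.

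The paper writes $u=\tilde u\circ\pi$ and uses the converse half of Lemma~\ref{lem:corresp_sym_min_to_min_on_cone} to conclude that $\tilde u$ is a global minimizer on $\mathbb{R}^n/\Gamma$. It then transfers minimality back to small balls through the local isometry $\pi|_{\mathbb{R}^n\backslash\{0\}}$. You never leave $\mathbb{R}^n$: you unfold a competitor on $B_r(x)$ over the $l$ disjoint orbit balls and test symmetric minimality directly.

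The underlying mechanism is the same. Your $w$ is exactly the pullback $\tilde v\circ\pi$ of the competitor pushed down through the cover chart $B_r(\pi(x))$. Your identity $E(w,B_R(0))-E(u,B_R(0))=l\bigl(E(v,B_r(x))-E(u,B_r(x))\bigr)$ is Lemma~\ref{lem:value_E_pullback} restricted to that single chart.

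What your version buys is self-containedness. It needs no Sobolev theory on the cone. It also avoids checking that lifted quotient competitors lie in $W^{1,2}_{\text{loc}}$ across the origin, a step the converse of Lemma~\ref{lem:corresp_sym_min_to_min_on_cone} leaves to the reader. And it makes explicit the chart radius and the $W^{1,2}$ gluing (via $v-u\in W^{1,2}_0$ and the symmetry of $u$), which the paper leaves implicit. The paper's version is shorter given the lemmas already in place.

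Your caveat about the origin in the dimension bound is also the right reading. In the case $n=4<k^*$ used later, the real content is that $\partial\{u>0\}\backslash\{0\}$ is smooth, which is exactly how the paper applies the proposition.
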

\begin{proof}
    Let $\tilde{u}$ be from Lemma \ref{lem:FBP_2dim_cone} such that $u=\tilde{u}\circ \pi$.
    By Lemma \ref{lem:corresp_sym_min_to_min_on_cone}, $\tilde{u}$ is a minimizer on $\mathbb{R}^n/\Gamma$.
    Since the restricted projection map $\pi : \mathbb{R}^n \backslash \{0\} \to  (\mathbb{R}^n/\Gamma) \backslash \{0\}$ is a local isometry, $u$ is then a spatially-local minimizer  on $\mathbb{R}^n\backslash\{0\}$.
\end{proof}
Most importantly, from this proposition, $u$ solves the boundary value problem \eqref{eq:PDE} on $\mathbb{R}^n\backslash\{0\}$, and $\pt\{u>0\}\backslash\{0\}$ is smooth when $n$ is less than the critical dimension.
These enable us to use the computations in \cite{CJK04, JS15} for our symmetric minimizers.

\subsection{Necessary Conditions for Stability Under Symmetric Variations}\label{subsec:cond_stab_sym_var}

In this subsection, we first prove a necessary condition for stability under symmetric variations.
This will then be rephrased in terms of the existence of strict subsolutions to the linearized equation of \eqref{1-phase_Ber}.
Such subsolutions on $\mathbb{R}^4$ will be constructed in the next section to show the instability of any nonlinear $1$-homogeneous candidates for symmetric minimizers.
Throughout this subsection, $\nu$ denotes the outward unit normal vector of a domain, depending on context.

\begin{lemma}\label{lem:neccessary_cond_stab_sym}
    Let $n\geq 3$ be an integer.
    Let $u$ be a $\Gamma$-symmetric minimizer of the one-phase problem \eqref{1-phase_Ber} with $Q=1$ against $\Gamma$-competitors on $\mathbb{R}^n$.
    Suppose $u$ is $1$-homogeneous, and $\pt\{u>0\}$ is smooth except possibly at $0$.
    Then for any $\Gamma$-symmetric nonnegative $f\in C^\infty_0(\mathbb{R}\backslash\{0\})$, we have
    \begin{equation}\label{eq:stab_criter}
        \int_{\pt \{u>0\}}Hf^2 d\sigma\leq \int_{\{u>0\}}|\nabla f|^2 dx,
    \end{equation}
    where $H$ is the outward mean curvature of $\pt\{u>0\}$.
\end{lemma}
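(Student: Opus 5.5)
This is the stability inequality of \cite{CJK04} (see also \cite[Proposition 2.1]{JS15}), derived from second variation of domain. I would follow their derivation and check at each step that only $\Gamma$-symmetric competitors are used.

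\textbf{Step 1: the family of competitors.} Fix a $\Gamma$-symmetric nonnegative $f\in C^\infty_0(\mathbb{R}^n\backslash\{0\})$. Extend $f$ to a $\Gamma$-symmetric smooth function near $\pt\{u>0\}$. Consider the domain variations
\[
\Omega_t:=\{x+t f(x)\nu(x): x\in \pt\{u>0\}\}\text{-bounded perturbation of }\{u>0\},
\]
that is, push the free boundary outward along the normal by $tf$. Equivalently, following \cite{CJK04}, take competitors $u_t:=(u+t f+\tfrac{t^2}{2} g)^+$, with a suitable correction $g$, or more simply compose $u$ with the flow $\Phi_t$ of a $\Gamma$-equivariant vector field $X=f\nu$ suitably extended.

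Since $\Gamma\subset O(n)$ preserves $\{u>0\}$ and maps normals to normals ($u$ is $\Gamma$-symmetric), the field $X$ is $\Gamma$-equivariant. Hence $\Phi_t$ commutes with $\Gamma$, so each competitor $u\circ\Phi_t^{-1}$, and any harmonic replacement on $\Phi_t(\{u>0\})$ with boundary data equal to $u$ outside $\supp f$, is $\Gamma$-symmetric. Harmonic replacement preserves symmetry by uniqueness of the Dirichlet problem. Since $\supp f$ avoids $0$ and $\pt\{u>0\}\backslash\{0\}$ is smooth by hypothesis, all computations take place where $u$ is smooth up to the free boundary, with $|\nabla u|=1$ there (Proposition \ref{prop:sym_min_prop}).

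\textbf{Step 2: second variation.} The minimality of $u$ against $\Gamma$-competitors gives $E(u_t,B_R)\ge E(u,B_R)$ for all small $|t|$. Next compute the expansion $E(u_t)=E(u)+t\,\delta E+\tfrac{t^2}{2}\delta^2E+o(t^2)$. The first variation vanishes because $|\nabla u|=1$ on the free boundary and $u$ is harmonic inside. For the second variation, I would reproduce the computation of \cite{CJK04}. Write the competitor's harmonic part as $u+t\varphi+O(t^2)$, where $\varphi$ solves $\Delta\varphi=0$ in $\{u>0\}$ with $\varphi=f$ on $\pt\{u>0\}$ (up to sign, since $|\nabla u|=1$). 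The volume term contributes curvature: $|\{u_t>0\}|$ changes by $t\int f\,d\sigma+\tfrac{t^2}{2}\int Hf^2\,d\sigma$. The Dirichlet term contributes $-t^2\int_{\{u>0\}}|\nabla\varphi|^2$ together with boundary terms. Using $\pt_\nu|\nabla u|^2$ and $\Delta u=0$, the identity $\pt_{\nu\nu}u=-H\,\pt_\nu u$ on the free boundary combines with these to give
\[
\delta^2E=\int_{\{u>0\}}|\nabla\varphi|^2-\int_{\pt\{u>0\}}Hf^2\,d\sigma\ \ge 0 .
\]

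\textbf{Step 3: conclusion.} Since $\varphi$ is the harmonic extension of $f|_{\pt\{u>0\}}$, Dirichlet's principle gives $\int|\nabla\varphi|^2\le\int_{\{u>0\}}|\nabla f|^2$. This yields \eqref{eq:stab_criter}.

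A cleaner route, which I would try first, avoids harmonic extension entirely. Use $u_t(x)=u(x)+t f(x)$ directly, restricted to the perturbed domain, as in \cite[Prop.~2.1]{JS15}. This is manifestly $\Gamma$-symmetric, and it produces $|\nabla f|^2$ directly.

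\textbf{Main obstacle.} The main obstacle is the bookkeeping in the second variation: the sign convention for $H$ (outward mean curvature of $\pt\{u>0\}$), and the correct $O(t^2)$ expansion of the moved domain near the free boundary. I would handle both by writing the perturbed region in normal coordinates $x=y+s\nu(y)$ over $\pt\{u>0\}$, where the volume element is $(1+sH+O(s^2))\,ds\,d\sigma$. Homogeneity is used only to ensure the singular set lies at $0$, which $\supp f$ avoids.
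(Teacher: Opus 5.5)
Your symmetry bookkeeping is fine: $\nabla u(gx)=g\nabla u(x)$ makes $f\nu$ $\Gamma$-equivariant, and harmonic replacement preserves $\Gamma$-symmetry by uniqueness. But Step 2 is the entire content of the lemma, and there it is asserted rather than derived. The pieces you do write down do not produce the formula you claim.

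Your conventions are mixed. You displace the free boundary outward. Yet the volume element $1+sH$ for $y+s\nu$ and the identity $\partial_{\nu\nu}u=-H\,\partial_\nu u$ match the lemma's $H$ only if $\nu$ is the \emph{inward} normal. (For the lemma's $H$, with $\nu$ outward, $\partial_\nu u=-1$ and $\partial_\nu\partial_\nu u=-H$, i.e.\ $H=-\operatorname{div}\nu$.)

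For the outward displacement $y\mapsto y+tf(y)\nu(y)$, the correct pieces are as follows.
\begin{itemize}
\item The volume increases by $t\int f-\frac{t^2}{2}\int Hf^2$, not $+\frac{t^2}{2}\int Hf^2$.
\item For the harmonic replacement $h_t$, Green's identity on $\{u>0\}\cap B$ gives $\int|\nabla h_t|^2-\int|\nabla u|^2=-\int_{\partial\{u>0\}}h_t\,d\sigma$.
\item Expanding $h_t(y+tf\nu)=0$ gives $h_t=tf-t^2f\,\partial_\nu\varphi+\frac{t^2}{2}Hf^2+O(t^3)$ on the old free boundary.
\item Hence the Dirichlet term is $-t\int f+t^2\int|\nabla\varphi|^2-\frac{t^2}{2}\int Hf^2$. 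Whichever direction you intend, its $t^2$-coefficient is $\int|\nabla\varphi|^2-\frac12\int Hf^2$, never $-\int|\nabla\varphi|^2$.
\end{itemize}
Half of the curvature term comes from each piece, and only then does $E(h_t)-E(u)=t^2(\int|\nabla\varphi|^2-\int Hf^2)+O(t^3)$ follow.

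This is not cosmetic. Since $H\ge 0$, the inequality with the opposite sign on the curvature term is trivially true, so the sign you defer to ``bookkeeping'' is exactly what has to be proved. You would also need to justify $h_t=u+t\varphi+O(t^2)$ in $C^2$ up to the free boundary near $\supp f$.

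For comparison, the paper avoids any shape derivative. It takes the inward competitor $v_\epsilon=(u-\epsilon F)^+$, with $F$ the harmonic extension of $f$ to $D=\{u>0\}\cap B$. Since $v_\epsilon$ is harmonic on its positivity set, Green's identity gives its Dirichlet energy exactly as $\int_D|\nabla u|^2+\epsilon\int_{\partial\{u>0\}}f\,d\sigma$. Only the volume of $\{0<u<\epsilon F\}$ then has to be expanded in normal coordinates. The price is the boundary Harnack principle ($F\le Cu$ near $0$ and $\partial B$), needed because $F>0$ throughout $D$. Your variation, localized to $\supp f$, would not need it.

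Your ``cleaner route'' also works, but not as written. Since $u\equiv 0$ off $\{u>0\}$, $(u+tf)^+$ makes all of $\{f>0\}$ positive, an $O(1)$ change in volume. To fix it:
\begin{itemize}
\item replace $u$ by a $\Gamma$-symmetric $C^2$ extension $\tilde u$ that is negative just outside the free boundary near $\supp f$;
\item cut $f$ off outside a thin collar of $\{u>0\}$. This is harmless, since the lemma's inequality only involves $f$ on the closure of $\{u>0\}$.
\end{itemize}
The same normal-coordinate expansion then gives $E((\tilde u+tf)^+)-E(u)=t^2(\int_{\{u>0\}}|\nabla f|^2-\int Hf^2)+O(t^3)$ directly. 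This needs no harmonic extension, no Dirichlet principle and no boundary Harnack.
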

This lemma is a slight generalization of \cite[Lemma 1]{CJK04}, and it is proved by modifying the proof there to the symmetric setting.
Notice that by Proposition \ref{prop:sym_min_prop}, the regularity assumption on $\pt\{u>0\}$ holds when $n$ is less than the critical dimension.

\begin{proof}
    We prove the lemma by showing that \eqref{eq:stab_criter} is implied by the stability of $u$ with respect to $\Gamma$-symmetric variations.
    \par
    \textbf{Observation}: By Proposition \ref{prop:sym_min_prop}, $u$ satisfies the boundary value problem \eqref{eq:PDE}.
    This observation makes many computations in the proof of \cite[Lemma 1]{CJK04} valid in our case.
    \par
    We begin with the following construction.
    Fix a ball $B = B_R(0)$ such that $\supp f\subset B$.
Define $D = \{u>0\}\cap  B$, and let $F$ (depending on $R$) be the harmonic function in $D$ with boundary values $f$ on $\pt D$.
Then $F$ and $u$ are both nonnegative harmonic functions on $D$, which is a nontangentially accessible
domain as defined in \cite{JK82}.
By the boundary Harnack principle (\cite[Theorem 5.1]{JK82}), we have $F\leq Cu$ on a neighborhood of the origin and also in a neighborhood of $\pt B$ in $\bar{D}$, for some constant $C$ depending on $D$. 
For $\epsilon>0$, define
\begin{equation*}
    D_\epsilon=\{x\in D \mid u(x)> \epsilon F(x)\}.
\end{equation*}
Then for small enough $\epsilon$, $\pt D_\epsilon$ agrees with $\pt D$ near the origin and near $\pt B$, and it is smoothly bounded elsewhere.
Define our competitor $v_\epsilon=u-\epsilon F$ on $\bar{D_\epsilon}$ and $v_\epsilon=0$ elsewhere.
Then $v_\epsilon \geq 0$.
We also check that $F, D_\epsilon, v_\epsilon$ are $\Gamma$-symmetric since the inputs $u,f,D$ are. 
\par
Let's estimate $E(v_\epsilon, B)$.
Since $v_\epsilon = u$ on $\pt B$, integration by parts gives
\begin{equation*}
    \int_{B\cap \{v_\epsilon>0\}}|\nabla v_\epsilon|^2 dx
    =\int_{(\pt B)\cap \{u>0\}} u\pt_\nu (u-\epsilon F)d\sigma
    =\int_D |\nabla u|^2dx-\epsilon \int_{(\pt B)\cap \{u>0\}} u\pt_\nu Fd\sigma.
\end{equation*}
Next, since $F=0$ on $(\pt B)\cap \{u>0\}$, $u=0$ on $\pt \{u>0\}$, and $\pt_\nu u=-1$ on $\pt\{u>0\}\backslash\{0\}$, integration by parts once again gives
\begin{equation*}
\begin{split}
    \int_{(\pt B)\cap \{u>0\}} u\pt_\nu Fd\sigma
    &=\int_{(\pt B)\cap \{u>0\}} (u\pt_\nu F-\pt_\nu u F) d\sigma\\
    &=\int_{\pt D}(u\pt_\nu F-\pt_\nu u F) d\sigma
    -\int_{(\pt \{u>0\})\cap B} Fd\sigma.
\end{split}
\end{equation*}
The first integral of the last line vanishes since both $u,F$ are harmonic on $D$.
Combining the results above, with $F=f$ on $\pt D$, we have
\begin{equation}\label{eq:neccessary_cond_stab_sym_first_est}
    E(u,B)-E(v_\epsilon,B)=-\epsilon \int_{(\pt \{u>0\})\cap B} fd\sigma
    +\Vol(0<u<\epsilon F).
\end{equation}
To estimate the last volume term, we quote the following computational results from \cite[Lemma 1]{CJK04}:
\begin{enumerate}
    \item $\pt_\nu \pt_\nu u=-H$ on $\pt\{u>0\}$ except at $0$.\label{item:mean_curva_2nd_nor_deri_u}

    \item $H\geq 0$ on $\pt\{u>0\}$ except at $0$.

    \item (Jacobian determinant estimate)
    Let $z : U \to  \mathbb{R}^n$ defined on an open subset $U \subset \mathbb{R}^{n-1}$ be a local parametrization of $\pt\{u>0\}$ and let $\nu(s)$ be the outward unit normal of $\pt\{u>0\}$ at $z(s)$.
    In the coordinate system $x(s, t) = z(s)- t\nu(s)$, the volume element has the asymptotics
    \begin{equation*}
        \dd\vol=(1+tH+O(t^2))\dd\sigma(s)\dd t,
    \end{equation*}
    where $d\sigma(s)$ is the area element on $\pt \{u>0\}$.
\end{enumerate}
The first and the second items hold in our case by our observation in the beginning.
The third item follows from standard computations.
\par
We now derive the range for $t$ for the domain $\{0<u<\epsilon F\}$ for each fixed $s$, in the coordinate system from Item 3 above.
The equation for $\pt D_\epsilon$ can be written as
\begin{equation*}
    u(z(s)-t\nu(s))=\epsilon F(z(s)-t\nu(s)).
\end{equation*}
Taking Taylor expansions at $t=0$, we get
\begin{equation*}
    t-H\frac{t^2}{2}+O(t^3)=\epsilon (F-t\pt_\nu F+O(t^2)),
\end{equation*}
where $H,F,\pt_\nu F$ are evaluated at $z(s)$, and the error terms $O(t^2), O(t^3)$ are uniformly bounded in $s$ since the region $(\pt D)\backslash \pt D_\epsilon$ is bounded away from $\pt B$ and $0$.
In particular we see that $t=O(\epsilon)$, and hence we have
\begin{equation*}
    -H\frac{t^2}{2}+(1+\epsilon\pt_\nu F)t-\epsilon F +O(\epsilon^3)=0.
\end{equation*}
Solving $t$ in terms of $\epsilon$ implicitly, with $s$ fixed, we get our bound:
\begin{equation*}
    t=\epsilon F+\epsilon^2(F^2 \frac{H}{2}-F\pt_\nu F)+O(\epsilon^3).
\end{equation*}
With this, we can now estimate
\begin{align*}
    \Vol(0<u<\epsilon F)&=\int_{(\pt\{u>0\})\cap B}\int_0^{\epsilon F+\epsilon^2(F^2 \frac{H}{2}-F\pt_\nu F)+O(\epsilon^3)} (1+tH+O(t^2))\dd t\dd \sigma\\
    &=\int_{(\pt\{u>0\})\cap B} (\epsilon F+\epsilon^2 F^2 H-\epsilon^2 F\pt_\nu F)\dd \sigma +O(\epsilon^3).
\end{align*}
Plugging this back into our result \eqref{eq:neccessary_cond_stab_sym_first_est} above, we get
\begin{equation*}
    E(u,B)-E(v_\epsilon,B)=\epsilon^2 \int_{(\pt \{u>0\})\cap B} (F^2H-F\pt_\nu F)\dd\sigma + O(\epsilon^3).
\end{equation*}
Since $u$ is a minimizer against $\Gamma$-symmetric competitors, with $F=f$ on $(\pt \{u>0\})\cap B$ and an integration by parts, we get
\begin{equation*}
    \int_{(\pt \{u>0\})\cap B} f^2H \dd \sigma \leq \int_{(\pt \{u>0\})\cap B}F\pt_\nu F \dd\sigma
    =\int_D |\nabla F|^2 dx.
\end{equation*}
Finally, since $F$ is the harmonic function that solves the boundary value problem from $f$, we get
\begin{equation*}
    \int_D |\nabla F|^2 \dd x\leq \int_D |\nabla f|^2\dd x.
\end{equation*}
This finishes the proof.
\end{proof}

We now deduce a corollary of the lemma above, following \cite{JS15}.
It is phrased in terms of the existence of certain strict subsolutions.
To begin with, we observe that the linearized equation to the boundary value problem \eqref{eq:PDE} for $u$ is
\begin{equation}\label{eq:linearized_PDE}
    \begin{cases}
        \Delta v = 0 & \text{in $\{u>0\}$},\\
        \pt_{\nu} v=Hv&\text{on $\pt\{u>0\}\backslash\{0\}$},
    \end{cases}
\end{equation}
where $H$ is the outward mean curvature of $\pt \{u>0\}$, and we use that $H=-\pt_\nu \pt_\nu u$ (Item \ref{item:mean_curva_2nd_nor_deri_u} in the proof above).
This linearized equation can be deduced by requiring that $(u+\epsilon v)_+$ solves the boundary value problem up to an error of order $O(\epsilon^2)$ (see p.\@ 1243 in \cite{JS15}).
\par
\noindent\textbf{Convention}: Notice a different sign convention: our $\nu$ is outward while $\cite{JS15}$ used the inward unit normal of $\pt\{u>0\}$.
However, $H$ is the outward mean curvature in both cases.
\par

Let $A$ be an annulus centered at $0$.
We say that $v$ is a subsolution to the linearized equation \eqref{eq:linearized_PDE} on $\{u>0\}\cap A$ if 
\begin{equation}\label{eq:subsol_linearized_PDE}
    \begin{cases}
        \Delta v\geq 0, v\geq 0 & \text{in $\{u>0\}\cap A$},\\
        \pt_\nu v\leq Hv & \text{on $(\pt\{u>0\})\cap A$},\\
        v=0& \text{on $\{u>0\}\cap \pt A$}.
    \end{cases}
\end{equation}
In connection with our lemma \ref{lem:neccessary_cond_stab_sym} above, by integration by parts we see that
\begin{equation}\label{eq:int_by_parts_subsol}
    \begin{split}
                        \int_{\{u>0\}}|\nabla v|^2\dd x
    &=-\int_{\{u>0\}} v\Delta v\dd x
    +\int_{\pt\{u>0\}}v\pt_\nu v \dd \sigma\\
    &\leq \int_{\pt\{u>0\}}v\pt_\nu v \dd \sigma\\
    &\leq \int_{\pt\{u>0\}}v^2H \dd \sigma.
    \end{split}
\end{equation}
Therefore, we have the following:
\begin{corollary}\label{cor:strict_subsol_unstable}
    Let $u$ be as in Lemma \ref{lem:neccessary_cond_stab_sym}.
    If there is a $\Gamma$-symmetric strict subsolution $v$ to \eqref{eq:subsol_linearized_PDE}, in the sense that it makes any of the inequalities in \eqref{eq:int_by_parts_subsol} strict, then $u$ is unstable on $A$ under $\Gamma$-symmetric variations.
\end{corollary}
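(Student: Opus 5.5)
The plan is to argue by contraposition against Lemma \ref{lem:neccessary_cond_stab_sym}. We produce a $\Gamma$-symmetric, nonnegative, compactly supported test function $f$ on $\mathbb{R}^n\backslash\{0\}$ that violates the stability inequality \eqref{eq:stab_criter}. The natural candidate is $f=v$ itself. By \eqref{eq:subsol_linearized_PDE}, $v$ vanishes on $\{u>0\}\cap\pt A$. Extending $v$ by zero outside $A$ gives a Lipschitz function supported in $\bar A$, which is bounded away from $0$ and infinity. It is $\Gamma$-symmetric, and $\Gamma$-symmetry is preserved by the extension since $A$ is a centered annulus and $\Gamma\subset O(n)$. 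Only the values of $f$ on $\overline{\{u>0\}}$ enter \eqref{eq:stab_criter}, so we may also modify $f$ freely outside $\{u>0\}$ to arrange this global extension.

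First, I will check that the chain \eqref{eq:int_by_parts_subsol} is justified. The required regularity comes from Proposition \ref{prop:sym_min_prop}: $\pt\{u>0\}\cap\bar A$ is smooth there, since it stays away from $0$. The boundary contribution on $\pt A\cap\{u>0\}$ vanishes because $v=0$ there. Strictness of any inequality in \eqref{eq:int_by_parts_subsol} then gives
\begin{equation*}
\int_{\{u>0\}}|\nabla v|^2\dd x<\int_{\pt\{u>0\}}Hv^2\dd\sigma ,
\end{equation*}
which is exactly the negation of \eqref{eq:stab_criter} for $f=v$.

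Second, Lemma \ref{lem:neccessary_cond_stab_sym} asks for $f\in C^\infty_0$, while $v$ is only Lipschitz across $\pt A$ and perhaps only $C^1$ up to the free boundary. To bridge this, I will mollify $v$ by $\Gamma$-invariant smoothing: convolve with a radial mollifier, which commutes with $O(n)$, or average over $\Gamma$ after mollifying. I will also cut off slightly inside $A$ if needed. Both sides of \eqref{eq:stab_criter} are continuous under $H^1$ convergence in $\{u>0\}$ together with trace convergence on the smooth compact piece $\pt\{u>0\}\cap\bar A$, where $H$ is bounded. Hence the strict inequality persists for a smooth $\Gamma$-symmetric approximant $f_k\geq0$. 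This contradicts Lemma \ref{lem:neccessary_cond_stab_sym}, so $u$ cannot be stable, i.e.\ it cannot be a minimizer against $\Gamma$-symmetric competitors on $A$.

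The main obstacle I expect is this approximation step, specifically keeping the trace term $\int Hf^2$ under control. The mollification must converge in $H^1$ of a neighborhood of $\overline{\{u>0\}}\cap\bar A$, so that the traces converge. I would handle this by first extending $v$ across the smooth free boundary as an $H^1$ function on a neighborhood, then mollifying. Symmetry is automatic because a radial mollifier commutes with $\Gamma$.

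The instability produced this way is localized on $A$. Concretely, the competitors $v_\epsilon$ built in the proof of Lemma \ref{lem:neccessary_cond_stab_sym} from $f_k$ change $u$ only near $\supp f_k\subset A$. This matches the claim that $u$ is unstable on $A$ under $\Gamma$-symmetric variations.
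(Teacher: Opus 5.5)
Your proposal is correct and follows the paper's route. The paper simply observes that strictness in \eqref{eq:int_by_parts_subsol} gives $\int_{\{u>0\}}|\nabla v|^2\,dx<\int_{\pt\{u>0\}}Hv^2\,d\sigma$, which contradicts \eqref{eq:stab_criter} with $f=v$; your $\Gamma$-invariant extension followed by radial mollification supplies the regularity justification that the paper leaves implicit.

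One correction to your last paragraph, which the argument does not actually need: the competitors $u-\epsilon F$ in the proof of Lemma \ref{lem:neccessary_cond_stab_sym} are not localized near $\supp f_k$. There $F$ is the harmonic extension of $f_k$ to all of $D=\{u>0\}\cap B$, and it is positive on every component of $D$ whose boundary meets $\{f_k>0\}$. So ``unstable on $A$'' should be read as failure of \eqref{eq:stab_criter} for a $\Gamma$-symmetric test function supported near $\bar A$.
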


\par
Next, we give more specific propositions based on this corollary, which are modifications of Propositions 2.1 and 2.2 in \cite{JS15}.
Following the ideas there, we impose the homogeneity ansatz on the subsolutions.
By restricting to the unit sphere $S=S^{n-1}$, we then have the following result:
\begin{proposition}\label{prop:unstab_sym_sphere_var}
    Let $u$ be as in Lemma \ref{lem:neccessary_cond_stab_sym}.
    Suppose there is a $\Gamma$-symmetric nonnegative strict subsolution $\phi$ to the equation on the sphere
    \begin{equation*}
        \begin{cases}
            \Delta_S \phi \geq \lambda \phi & \text{in $D_S:=\{u>0\}\cap S$},\\
            \pt_\nu \phi \leq H\phi & \text{on $\pt D_S$},
        \end{cases}
    \end{equation*}
    with the constant $\lambda \geq (n-2)^2/4$; here $\Delta_S$ is the Laplacian on $S$.
    Then $u$ is unstable under $\Gamma$-symmetric variations on some sufficiently large annulus.
\end{proposition}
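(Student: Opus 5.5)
The plan is to follow \cite[Proposition 2.1]{JS15}: extend $\phi$ homogeneously, cut it off radially, and check that the result is a $\Gamma$-symmetric test function violating \eqref{eq:stab_criter}.

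First I build the extension. Write $x=r\theta$ with $\theta\in S$. Set
\begin{equation*}
    v(x)=r^{-(n-2)/2}\phi(\theta)\,\eta(\log r),
\end{equation*}
where $\eta\in C_0^\infty(\mathbb{R})$ is nonnegative and supported in $(-L,L)$. Since every $g\in\Gamma\subset O(n)$ preserves $r$ and acts on $S$ as an isometry, and $\phi$ is $\Gamma$-symmetric, $v$ is $\Gamma$-symmetric. It is supported in the annulus $A=\{e^{-L}<|x|<e^{L}\}$, so it vanishes near $0$ and on $\{u>0\}\cap\pt A$.

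Next I rewrite both sides of \eqref{eq:stab_criter} on the cylinder. Because $u$ is $1$-homogeneous, $\{u>0\}$ is a cone. Its boundary normal $\nu$ is tangent to $S$, so $\pt_\nu v=r^{-(n-2)/2}\eta\,\pt_\nu\phi/r$. Its mean curvature scales as $H(r\theta)=H(\theta)/r$, where $H(\theta)$ is the mean curvature of $\pt D_S$ in $S$. Put $t=\log r$. Using $dx=r^{n-1}dr\,d\theta=r^n\,dt\,d\theta$, the gradient term becomes
\begin{equation*}
    \int_{\{u>0\}}|\nabla v|^2dx=\int_{\mathbb{R}}\int_{D_S}\Big[\big(\eta'-\tfrac{n-2}{2}\eta\big)^2\phi^2+\eta^2|\nabla_S\phi|^2\Big]d\theta\,dt.
\end{equation*}
Integrating by parts on $D_S$ gives
\begin{equation*}
    \int_{D_S}|\nabla_S\phi|^2=\int_{\pt D_S}\phi\,\pt_\nu\phi-\int_{D_S}\phi\,\Delta_S\phi\le\int_{\pt D_S}H\phi^2-\lambda\int_{D_S}\phi^2,
\end{equation*}
and strict inequality holds in at least one place by strictness of $\phi$. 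The boundary term equals $\int_{\pt\{u>0\}}Hv^2\,d\sigma$, because $d\sigma=r^{n-2}dr\,d\sigma_S$, which gives the factor $r^{-(n-2)}r^{-1}r^{n-1}=1$ in $dt$. After expanding $\int(\eta'-\tfrac{n-2}{2}\eta)^2\,dt=\int\eta'^2+\tfrac{(n-2)^2}{4}\eta^2\,dt$ (the cross term integrates to zero), we obtain
\begin{equation*}
    \int|\nabla v|^2-\int_{\pt\{u>0\}}Hv^2\le\Big(\tfrac{(n-2)^2}{4}-\lambda\Big)\|\eta\|_2^2\|\phi\|_2^2+\|\eta'\|_2^2\|\phi\|_2^2-\delta\|\eta\|_2^2,
\end{equation*}
where $\delta>0$ is the strictness gap from the $\phi$-inequalities.

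The main obstacle is the borderline case $\lambda=(n-2)^2/4$, where the first bracket gives no help. There I would pick $\eta$ as a long plateau cut off at both ends, so that $\|\eta'\|_2^2/\|\eta\|_2^2\sim 1/L\to0$. For $L$ large the strict gap $\delta\|\eta\|_2^2$ then dominates, and the right-hand side is negative. This contradicts \eqref{eq:stab_criter} with $f=v$ as in Lemma \ref{lem:neccessary_cond_stab_sym}, equivalently Corollary \ref{cor:strict_subsol_unstable}. Lemma \ref{lem:neccessary_cond_stab_sym} asks for smooth $f$, and $v$ is smooth away from $0$ since $\phi$ is (or can be approximated in $H^1$ with its traces). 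Hence $u$ is unstable under $\Gamma$-symmetric variations on the large annulus $A$.
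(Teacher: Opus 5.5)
Your proof is correct, but it takes a different route from the paper's. The paper first turns strictness into a spectral gap. The strict integrated inequality for $\phi$ gives $\lambda<\Lambda$, where $-\Lambda$ is the infimum of the $\Gamma$-symmetric Rayleigh quotient \eqref{eq:sphere_min_prob}. The paper then takes the positive minimizer $\bar\psi$, which solves $\Delta_S\bar\psi=\Lambda\bar\psi$ and $\pt_\nu\bar\psi=H\bar\psi$. It chooses $\beta\in(\lambda,\Lambda)$ so that $f''+(n-1)f'/r+\beta f/r^2=0$ has the oscillating solutions $r^{-(n-2)/2}\cos(\omega\log r)$, with $\omega^2=\beta-\frac{(n-2)^2}{4}>0$. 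On one positive hump of $f$ this gives a pointwise strict subsolution $f\bar\psi$ vanishing on the spherical boundary components, and Corollary \ref{cor:strict_subsol_unstable} applies.

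You instead work directly with the quadratic form in \eqref{eq:stab_criter}. The weight $r^{-(n-2)/2}$ and the variable $t=\log r$ split the form exactly into $\|\phi\|^2\int(\eta'^2+\frac{(n-2)^2}{4}\eta^2)\,dt$ plus $\|\eta\|^2$ times the spherical form of $\phi$. So $(n-2)^2/4$ appears as the sharp Hardy constant. The borderline case $\lambda=(n-2)^2/4$ is handled by long plateaus, where $\|\eta'\|^2/\|\eta\|^2\to 0$, and the integrated gap $\delta$ absorbs the error. In fact the paper's hump is your ansatz with $\eta=\cos(\omega t)$ on an interval of length $\pi/\omega$. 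The spectral gap is what lets the paper take $\beta>(n-2)^2/4$ even when $\lambda=(n-2)^2/4$, where the Euler equation with $\beta=\lambda$ has no oscillating solutions.

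Your route is more economical. It needs no existence, regularity or positivity of a $\Gamma$-constrained eigenfunction, and no ODE analysis. It uses $\phi$ itself and goes straight to Lemma \ref{lem:neccessary_cond_stab_sym}. The paper's route instead produces an explicit pointwise subsolution in the form \eqref{eq:subsol_linearized_PDE}. The admissibility point you flag, that $v$ be a legitimate test function in the lemma and so needs some regularity of $\phi$ up to $\pt D_S$, is of the same order as what the paper leaves implicit. The paper applies the corollary to $f\bar\psi$, which is only Lipschitz across $\pt A$.
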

\begin{proof}
    Our proof closely follows that of \cite[Peoposition 2.1]{JS15}.
    Since $\phi$ is a strict subsolution, a similar integration by parts as in \eqref{eq:int_by_parts_subsol} above gives
    \begin{equation*}
        \int_{D_S} |\nabla \phi|^2\dd x -\int_{\pt D_S}H\phi^2\dd \sigma
        <-\lambda \int_{D_S}\phi^2 \dd x.
    \end{equation*}
    Hence, we have $\lambda<\Lambda$, where  
    \begin{equation}\label{eq:sphere_min_prob}
        -\Lambda=\inf_{\psi:\text{$\Gamma$-sym.}} \frac{\int_{D_S} |\nabla \psi|^2\dd x -\int_{\pt D_S}H\psi^2\dd \sigma}
        {\int_{D_S}\psi^2 \dd x}.
    \end{equation}
By classical calculus of variations, the minimizer $\bar{\psi}$ to the above minimizing problem \eqref{eq:sphere_min_prob} exists and satisfies $\bar{\psi} \in C^\infty (\bar{D_S})$, $\bar{\psi}>0$ in $D_S$ and 
\begin{equation*}
    \begin{cases}
        \Delta_S \bar{\psi}=\Lambda \bar{\psi} &\text{in $D_S$},\\
        \pt_\nu \bar{\psi}=H\bar{\psi} &\text{on $\pt D_S$}.
    \end{cases}
\end{equation*}
We extend $\bar{\psi}$ to the whole $\{u>0\}\subset \mathbb{R}^n$ by $0$-homogeneity and define $v=f(r)\bar{\psi}$, where $r=|x|$ and $f$ is to be determined.
Then we have
\begin{equation*}
    \Delta v= (f''+(n-1)f'/r+\Lambda f/r^2)\bar{\psi}.
\end{equation*}
Now, we choose a constant $\beta$ such that 
\begin{equation*}
    \Lambda>\beta>\lambda\geq (n-2)^2/4,
\end{equation*}
and let $f$ solves the ODE
\begin{equation*}
    f''+(n-1)f'/r+\beta f/r^2=0.
\end{equation*}
It can be checked that the values of $f$ oscillate around $0$.
Hence, we can choose an open annulus $A\subset \mathbb{R}^n$ centered at $0$ such that $f>0$ in $A$ and $f=0$ on $\pt A$.
Then we have
\begin{itemize}[topsep=0pt, ]
    \item $v>0$ on $\{u>0\}\cap A$,
    \item $\Delta v=(\Lambda -\beta)f \bar{\psi}/r^2>0\quad \text{in $\{u>0\}\cap A$}$,
    \item $\pt_\nu v=Hv$ on $(\pt\{u>0\})\cap A$ since $f$ is radial,
    \item $v=0$ on $\{u>0\}\cap \pt A$ since $f=0$ on $\pt A$,
    \item $v$ is $\Gamma$-symmetric since $\bar{\psi}$ is $\Gamma$-symmetric.
\end{itemize}
    Therefore, $v$ serves as a $\Gamma$-symmetric strict subsolution in Corollary \ref{cor:strict_subsol_unstable}, and hence $u$ is unstable on $A$.
\end{proof}
\par
We also have the following homogeneous version:
\begin{proposition}\label{prop:unstab_sym_homoge_var}
    Let $u$ be as in Lemma \ref{lem:neccessary_cond_stab_sym}.
    Suppose there is a $\Gamma$-symmetric, homogeneous of degree $-\mu$, nonnegative strict subsolution $\bar{v}$ to the equation
    \begin{equation*}
        \begin{cases}
            \Delta \bar{v} \geq \gamma \bar{v}/|x|^2 & \text{in $\{u>0\}$},\\
            \pt_\nu \bar{v} \leq H\bar{v} & \text{on $\pt \{u>0\}\backslash\{0\}$},
        \end{cases}
    \end{equation*}
    with the constant $\gamma$ satisfying
    \begin{equation}\label{eq:cond_gamma}
        \gamma \geq (\frac{n}{2}-1-\mu)^2.
    \end{equation}
    Then $u$ is unstable under $\Gamma$-symmetric variations.
\end{proposition}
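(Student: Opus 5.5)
We follow the proof of \cite[Proposition 2.2]{JS15} and reduce the statement to Corollary \ref{cor:strict_subsol_unstable}. The plan is to multiply $\bar v$ by a radial cutoff $f(r)$ vanishing at the ends of an annulus. This produces a $\Gamma$-symmetric strict subsolution of \eqref{eq:subsol_linearized_PDE} on $\{u>0\}\cap A$.

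First we compute the Laplacian of $v=f(r)\bar v$. Homogeneity gives $x\cdot\nabla\bar v=-\mu\bar v$, so
\begin{equation*}
\Delta v = f\Delta\bar v + 2f'\,\pt_r\bar v + \Big(f''+\frac{n-1}{r}f'\Big)\bar v
\geq \Big(f''+\frac{n-1-2\mu}{r}f'+\frac{\gamma}{r^2}f\Big)\bar v
\end{equation*}
wherever $f\geq 0$. We then choose $\beta$ with $0<\beta<\gamma$ and $\beta>(\frac n2-1-\mu)^2$. If $\gamma$ equals the threshold exactly, we instead keep $\beta=\gamma$ and use strictness of $\bar v$. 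Let $f$ solve the Euler ODE $f''+\frac{n-1-2\mu}{r}f'+\frac{\beta}{r^2}f=0$. Its indicial equation is $s^2+(n-2-2\mu)s+\beta=0$, with discriminant $(n-2-2\mu)^2-4\beta<0$ exactly when $\beta>(\frac n2-1-\mu)^2$. So $f=r^{-(n-2-2\mu)/2}\cos(\omega\log r)$ with $\omega>0$, and $f$ oscillates around $0$. We take $A=\{r_1<|x|<r_2\}$ between two consecutive zeros of $f$, so that $f>0$ in $A$ and $f=0$ on $\pt A$.

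Next we check the subsolution properties on $\{u>0\}\cap A$:
\begin{itemize}[topsep=0pt]
\item $v\geq 0$, and $\Delta v\geq(\gamma-\beta)f\bar v/r^2\geq 0$.
\item On $\pt\{u>0\}\setminus\{0\}$, $u$ is $1$-homogeneous, so the free boundary is a cone and $\nu\perp x$. Hence $\pt_\nu v=f\,\pt_\nu\bar v\leq Hf\bar v=Hv$.
\item $v=0$ on $\{u>0\}\cap\pt A$.
\item $v$ is $\Gamma$-symmetric, because $\Gamma\subset O(n)$ preserves $|x|$ and $\bar v$ is $\Gamma$-symmetric.
\end{itemize}
For strictness, if $\gamma>\beta$ then $\Delta v>0$ on the open set where $\bar v>0$, so the first inequality in \eqref{eq:int_by_parts_subsol} is strict, provided $\bar v\not\equiv 0$. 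Otherwise the strictness of $\bar v$ itself transfers to $v$ on $A$.

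Finally we need $v$ to be admissible in Lemma \ref{lem:neccessary_cond_stab_sym}, which requires compactly supported test functions away from $0$. We approximate $v$ by smooth $\Gamma$-symmetric functions, for instance by averaging over $\Gamma$, so that \eqref{eq:stab_criter} applies in the limit. The strict inequality $\int|\nabla v|^2<\int Hv^2$ then contradicts \eqref{eq:stab_criter}, so $u$ is unstable.

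The main obstacle is this regularity and integrability issue. We must make sure $\bar v$ is regular enough up to $\pt\{u>0\}\cap\bar A$ for the integrations by parts in \eqref{eq:int_by_parts_subsol} to hold. Since $A$ stays away from $0$ and the free boundary is smooth there by Proposition \ref{prop:sym_min_prop}, this should follow, but it needs to be checked.
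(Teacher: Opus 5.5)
There is a genuine gap in the borderline case. Your computation of $\Delta(f\bar v)$ and the oscillating Euler profile are correct when $\gamma>(\frac n2-1-\mu)^2$ strictly. The case $\gamma=(\frac n2-1-\mu)^2$ is not handled, and it is exactly the case the paper needs: $n=4$, $\mu=\frac13$, $\gamma=\frac49=(2-1-\frac13)^2$.

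Keeping $\beta=\gamma$ there does not work. The discriminant vanishes, so the solutions are $r^{s}(a+b\log r)$ with $s=-\frac{n-2-2\mu}{2}$. These have at most one zero on $(0,\infty)$, so there is no annulus with $f=0$ on both boundary spheres. More decisively, the lower bound you derived, $\Delta v\geq \big(f''+\frac{n-1-2\mu}{r}f'+\frac{\gamma}{r^2}f\big)\bar v$, can never be made nonnegative at the threshold. Substituting $r=e^t$ and $f=e^{st}g(t)$ turns $f''+\frac{n-1-2\mu}{r}f'+\frac{\gamma}{r^2}f$ into $r^{s-2}g''$. A convex $g$ vanishing at both ends of an interval cannot be positive inside. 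So the strictness of $\bar v$ cannot simply be \emph{transferred} to $f\bar v$ pointwise, especially when it comes only from the boundary condition. This, not the approximation/regularity step you flag (which is routine away from $0$), is the real obstacle.

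The missing idea is to turn the strictness into a quantitative spectral gap before choosing $\beta$. Restricting to the sphere, $\phi=\bar v|_S$ satisfies $\Delta_S\phi\geq\lambda\phi$ in $D_S$ and $\pt_\nu\phi\leq H\phi$ on $\pt D_S$, with $\lambda=\gamma+\mu(n-2-\mu)$. (This is the correct constant; the paper's display has $2\mu$ in place of $\mu$, a typo.) The condition $\lambda\geq\frac{(n-2)^2}{4}$ is equivalent to \eqref{eq:cond_gamma}. Strictness and integration by parts give
\[
\int_{D_S}|\nabla\phi|^2-\int_{\pt D_S}H\phi^2<-\lambda\int_{D_S}\phi^2,
\]
so the quantity $\Lambda$ in \eqref{eq:sphere_min_prob} satisfies $\Lambda>\lambda$.

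The paper's route (Proposition \ref{prop:unstab_sym_sphere_var}) then picks $\beta\in(\lambda,\Lambda)$, strictly above $\frac{(n-2)^2}{4}$, so that $f''+\frac{n-1}{r}f'+\frac{\beta}{r^2}f=0$ oscillates. It uses the $0$-homogeneous first eigenfunction $\bar\psi$, not $\bar v$, in $v=f\bar\psi$. This $v$ is a genuine strict subsolution, since $\Delta v=(\Lambda-\beta)f\bar\psi/r^2>0$.

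Alternatively, you can stay with $\phi$ and argue at the level of the quadratic form. Suppose $\int_{D_S}|\nabla\phi|^2-\int_{\pt D_S}H\phi^2\leq-(\lambda+\delta)\int_{D_S}\phi^2$ for some $\delta>0$. Take $v=f(r)\phi(x/|x|)$ with $\beta\in(\frac{(n-2)^2}{4},\frac{(n-2)^2}{4}+\delta)$ and $f$ between two consecutive zeros $r_1<r_2$. Then $\int f'^2r^{n-1}\,dr=\beta\int f^2r^{n-3}\,dr$, which yields
\[
\int_{\{u>0\}}|\nabla v|^2-\int_{\pt\{u>0\}}Hv^2=\Big(\beta\int_{D_S}\phi^2+\int_{D_S}|\nabla\phi|^2-\int_{\pt D_S}H\phi^2\Big)\int_{r_1}^{r_2}f^2r^{n-3}\,dr<0.
\]
This contradicts \eqref{eq:stab_criter} without needing any pointwise subsolution property. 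In both routes, it is the gap $\delta$ (equivalently $\Lambda>\lambda$) that creates room to push $\beta$ strictly above the Hardy threshold.
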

\begin{proof}
    If we restrict $\bar{v}$ to the unit sphere $S=S^{n-1}$, then it satisfies
    \begin{equation*}
        \Delta_S \bar{v}\geq (\gamma+\mu(n-2-2\mu))\bar{v}\quad \text{on $\{u>0\}\cap S$}.
    \end{equation*}
    In connection with the previous Proposition \ref{prop:unstab_sym_sphere_var}, we see that the inequality 
    \begin{equation*}
        \gamma+\mu(n-2-2\mu)\geq (n-2)^2/4
    \end{equation*}
    is the same as \eqref{eq:cond_gamma}.
    The conclusion now follows from the previous proposition.
\end{proof}

In the next section, we will construct such a strict subsolution on $\mathbb{R}^4$ as in the last proposition above.
This will show the instability of any nonlinear $1$-homogeneous candidates for symmetric minimizers.

\section{Proof of the Main Results}\label{sec:proof_main}

In this section, we will first prove the rigidity Theorem \ref{thm:rigidity}, and then use it to prove Theorem \ref{thm:dim_bound}.
For the reader's convenience, we restate the first theorem here with slightly different notations:

\begin{theorem}[Theorem \ref{thm:rigidity} restated]
Let $\Gamma$ be a discrete group of isometries of $S^3$ acting freely.
    Let $C(S^3/\Gamma)$ be the metric cone over $S^3/\Gamma$, and let $p$ be its tip.
    If $\tilde{u}$ is global minimizer of the one-phase problem \eqref{1-phase_Ber} on $C(S^3/\Gamma)$ with constant $Q$ such that $p \in \pt \{\tilde{u}>0\}$, then $\Gamma$ is trivial, and $\tilde{u}$ is a linear solution on $\mathbb{R}^4\cong C(S^3)$ as in \eqref{eq:linear_sol}.
\end{theorem}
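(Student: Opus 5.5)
The plan is to reduce to a $1$-homogeneous $\Gamma$-symmetric minimizer on $\mathbb{R}^4$, rule out nonlinear ones by instability, and finally rule out nontrivial $\Gamma$ by a symmetry argument.

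\textbf{Step 1: reduction to a homogeneous symmetric minimizer.} By scaling we may assume $Q=1$. Since $p$ is the tip of the cone, it is a fixed point of all dilations. I would blow up $\tilde u$ at $p$. By Lemma \ref{lem:RCD_blowup_prop}, and since any blowup of a metric measure cone at its tip is itself, the blowup $\tilde u_p$ is again a global minimizer on $C(S^3/\Gamma)$ with $p\in\pt\{\tilde u_p>0\}$. The Weiss-type formula on $\mathsf{RCD}$ cones \cite[Appendix A]{CZZ22} then gives a further blowup which is $1$-homogeneous. Alternatively, apply the Weiss formula to $\tilde u$ at $p$ directly, which works because the cone structure makes the formula available at the tip.

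Let $w$ denote this $1$-homogeneous global minimizer. By Lemma \ref{lem:corresp_sym_min_to_min_on_cone}, $u:=w\circ\pi$ is a $1$-homogeneous $\Gamma$-symmetric minimizer against $\Gamma$-symmetric competitors on $\mathbb{R}^4$, with $0\in\pt\{u>0\}$. Since $4<k^*$, Proposition \ref{prop:sym_min_prop} shows that $\pt\{u>0\}\setminus\{0\}$ is smooth and that \eqref{eq:PDE} holds there. Hence $u$ satisfies the hypotheses of Lemma \ref{lem:neccessary_cond_stab_sym}, and Propositions \ref{prop:unstab_sym_sphere_var} and \ref{prop:unstab_sym_homoge_var} apply.

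\textbf{Step 2: instability of nonlinear candidates (the main obstacle).} Suppose $u$ is not linear. I would construct a strict subsolution as in Proposition \ref{prop:unstab_sym_homoge_var}, adapting the explicit construction of \cite{JS15} on $\mathbb{R}^4$. That construction is built from $u$ and its derivatives, namely a combination of $|\nabla u|^2$-type quantities and $u/|x|$, homogeneous of some negative degree $-\mu$. Because every ingredient is invariant under $O(4)$ acting jointly on $u$ and $x$, the resulting $\bar v$ is automatically $\Gamma$-symmetric whenever $u$ is.

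Next I would verify, by the computations of \cite{JS15}, which are local on $\{u>0\}\setminus\{0\}$ and use only \eqref{eq:PDE} and Item \ref{item:mean_curva_2nd_nor_deri_u}, that $\Delta\bar v\ge\gamma\bar v/|x|^2$ and $\pt_\nu\bar v\le H\bar v$ hold with $\gamma\ge(1-\mu)^2$. I would also check strictness, which should follow from nonlinearity: $H\not\equiv0$, or $|\nabla u|\not\equiv1$. The main worry is the degenerate case where the Jerison--Savin subsolution vanishes identically; that case is exactly when $u$ is linear. Proposition \ref{prop:unstab_sym_homoge_var} then makes $u$ unstable under $\Gamma$-symmetric variations, contradicting minimality. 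Hence $u$ is linear, i.e. $u=(x\cdot e)^+$ for some unit vector $e$.

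\textbf{Step 3: triviality of $\Gamma$ and identification of $\tilde u$.} The linear function $u=(x\cdot e)^+$ is $\Gamma$-symmetric, so every $g\in\Gamma$ satisfies $(gx\cdot e)^+=(x\cdot e)^+$. This forces $g^Te=e$, so every $g$ fixes the nonzero vector $e$. Since $\Gamma$ acts freely on $S^3$, this gives $g=\id$, and therefore $\Gamma$ is trivial.

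Now $\tilde u$ is itself a global minimizer on $\mathbb{R}^4$ with $0$ in its free boundary. Since $4<k^*$, Corollary \ref{cor:global_min_linear} shows that $\tilde u$ is a linear solution up to translation. Because $0\in\pt\{\tilde u>0\}$, the translation passes through $0$, so $\tilde u$ is exactly of the form \eqref{eq:linear_sol}. This completes the argument.
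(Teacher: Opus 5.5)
Your Steps 1 and 3 match the paper: blow up at the tip, apply the Weiss formula on cones, lift via Lemma~\ref{lem:corresp_sym_min_to_min_on_cone}, and finish with the Liouville corollary. Your proof that $\Gamma$ is trivial is cleaner than the paper's pole-swapping argument: from $(x\cdot g^{T}e)^+=(x\cdot e)^+$ you get $ge=e$, and freeness gives $g=\id$. The gap is in Step 2, which carries all the content. As written, it would not go through.

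\textbf{The subsolution.} The object you describe is not the Jerison--Savin subsolution and cannot serve. Any combination of $|\nabla u|^2$ and $u/|x|$ is $0$-homogeneous, which contradicts your own requirement of negative degree $-\mu$. The $\mathbb{R}^4$ construction is Hessian-based. With $\lambda_j$ the eigenvalues of $D^2u$, set $w=(\sum_{\lambda_j>0}\lambda_j^2+4\sum_{\lambda_j<0}\lambda_j^2)^{1/2}$ and $\bar v=w^{1/3}$, which is homogeneous of degree $-\frac13$. By \cite[Theorem 4.1]{JS15}, $w\Delta w\ge\frac23|\nabla w|^2+\frac43 w^2/|x|^2$ in $\{u>0\}$ and $\partial_\nu w\le 3Hw$ on the free boundary. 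These give $\Delta\bar v\ge\frac49\bar v/|x|^2$ and $\partial_\nu\bar v\le H\bar v$.

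Your equivariance idea is the right one, but it must be checked for this $w$: from $u(gx)=u(x)$ one gets $D^2u(gx)=g\,D^2u(x)\,g^{T}$, so $w(gx)=w(x)$. You also need $w>0$ on $\partial\{u>0\}\setminus\{0\}$. This follows from the Hopf lemma applied to the subharmonic, $0$-homogeneous function $|\nabla u|^2$. For nonlinear $u$ it gives $H=-\partial_\nu\partial_\nu u>0$, and $\nu$ is an eigenvector of $D^2u$ with eigenvalue $-H<0$.

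\textbf{Strictness.} Strictness is not a matter of the subsolution being nonzero. Here $\gamma=\frac49=(1-\frac13)^2$ is exactly the borderline value in Proposition~\ref{prop:unstab_sym_homoge_var}. A non-strict subsolution only yields $\Lambda\ge (n-2)^2/4=1$ in Proposition~\ref{prop:unstab_sym_sphere_var}. If $\Lambda=1$, no $\beta$ with $\Lambda>\beta>1$ exists, and at $\beta=1$ the radial ODE has only the non-oscillating solutions $r^{-1}(a+b\log r)$, so no annulus can be found. Nonlinearity alone ($H\not\equiv 0$) does not exclude equality.

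The paper argues at a free-boundary point $x_0$, with two cases:
\begin{itemize}
\item If $\partial_\nu\bar v<H\bar v$ at $x_0$, continuity gives a strict boundary term.
\item If equality holds, the Hessian eigenvalues at $x_0$ are forced to be $0,2H,-H,-H$. In that configuration the remark after \cite[Theorem 4.1]{JS15} adds a term $\epsilon(\partial_\nu w)^2$ to the interior inequality near $x_0$. Since $\partial_\nu w=3Hw>0$ there, the interior inequality for $\bar v$ becomes strict.
\end{itemize}
With these two ingredients, your outline becomes the paper's proof.
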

Here is our plan for the proof.
By passing to a blowup along with the tools we have developed in subsection \ref{subsec:sym_obj}, it suffices to consider $1$-homogeneous $\Gamma$-symmetric minimizers $u$ against $\Gamma$-symmetric competitors on $\mathbb{R}^4$.
Next, we prove the linearity of $u$ by contradiction: Assuming $u$ is nonlinear, we construct a subsolution as in Proposition \ref{prop:unstab_sym_homoge_var}, resulting in the instability of $u$.
The construction of such subsolutions closely follows \cite[Section 4]{JS15} and uses $u$ as input.
Finally, we use the linearity of $u$ to deduce that $\Gamma$ is trivial, and hence $\tilde{u}$ is linear.

\begin{proof}[Proof of Theorem \ref{thm:rigidity}]
Without loss of generality, we assume $Q=1$.
We also use the identification $C(S^3/\Gamma)\cong \mathbb{R}^4/\Gamma$ and $p=0$.
Let $\pi:\mathbb{R}^4\to \mathbb{R}^4/\Gamma$ be the quotient map.
Let $\tilde{u}_0$ be a blowup of $\tilde{u}$ at $0$.
By Lemma \ref{lem:RCD_blowup_prop} and Theorem \ref{thm:rcd_cone_weiss}, $\tilde{u}_0$ is $1$-homogeneous global minimizer on $\mathbb{R}/\Gamma$.
By Lemma \ref{lem:corresp_sym_min_to_min_on_cone}, there exists a $\Gamma$-symmetric minimizer $u$ on $\mathbb{R}^4$ to the one-phase problem against $\Gamma$-symmetric competitors, such that $0\in \pt\{u>0\}$ and $u=\tilde{u}_0\circ\pi$.
Clearly $u$ is also $1$-homogeneous. 
By Proposition \ref{prop:sym_min_prop}, $u$ satisfies the boundary value problem \eqref{eq:PDE} and hence the computations in \cite{JS15} are valid in our case.
\par
Assuming $u$ is not linear, we construct a strict subsolution as in Proposition \ref{prop:unstab_sym_homoge_var}.
Define the following function
\begin{equation}\label{eq:dim_4_constr_w}
    w=\sqrt{\sum_{\lambda_j>0}\lambda^2_j + 4\sum_{\lambda_j<0}\lambda^2_j},
\end{equation}
where $\lambda_j$ are the eigenvalues of $D^2 u$.
Notice that since $u$ is $\Gamma$-symmetric, so is $w$.
Furthermore, since $|\nabla u|^2/2$ is subharmonic on $\{u>0\}$ and homogeneous of degree $0$, its maximum happens on $\pt\{u>0\}$.
Then since $u$ is nonlinear, by the Hopf lemma and Item \ref{item:mean_curva_2nd_nor_deri_u} in Lemma \ref{lem:neccessary_cond_stab_sym} we know that
\begin{equation}\label{eq:u_concv_bdy_normal}
    0<\pt_{\nu} (|\nabla u|^2/2)=-\pt_\nu\pt_\nu u=H\quad \text{ on $\pt\{u>0\}\backslash\{0\}$},
\end{equation}
where $\nu$ is the outer unit normal (in \cite{JS15} the $\nu$ there is \emph{inner}), and $H$ is the outward mean curvature.
On the other hand, one checks that $\nu$ is an eigenvector of $D^2u$ on $\pt\{u>0\}\backslash\{0\}$(See \cite[Subsection 2.1]{JS15}).
This implies there is some $\lambda_j<0$, and hence $w>0$ on $\pt \{u>0\}\backslash\{0\}$.
\par
Next, we quote the following results:
\begin{lemma}[{\cite[Theorem 4.1]{JS15}}]\label{lem:ineq_w}
    For $w$ defined in \eqref{eq:dim_4_constr_w}, we have the interior inequality
    \begin{equation}\label{eq:w_interior_ineq}
        w\Delta w\geq \frac{2}{3}|\nabla w|^2+\frac{4}{3}\frac{w^2}{|x|^2}\quad \text{in $\{u>0\}$}
    \end{equation}
    in the viscosity sense.
    On the free boundary, we have
    \begin{equation}\label{eq:w_bdy_ineq}
        \frac{1}{H}\frac{\pt_\nu w}{w}\leq 3 \quad \text{on $\pt \{u>0\}\backslash\{0\}$}.
    \end{equation}
\end{lemma}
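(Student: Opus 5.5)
The lemma is a pointwise statement about $w$, and $w$ depends only on $D^2u$. So I would prove it by direct computation, following \cite[Section 4]{JS15}. That computation is legitimate here because, by Proposition \ref{prop:sym_min_prop}, $u$ is harmonic in $\{u>0\}$, satisfies $|\nabla u|=1$ on $\pt\{u>0\}$, and its free boundary is smooth away from $0$ (recall $4<k^*$). $\Gamma$-symmetry plays no role in the computation. The structural inputs are the following.
\begin{enumerate}
\item Each entry of $D^2u$ is harmonic in $\{u>0\}$.
\item $\operatorname{tr}D^2u=0$.
\item By $1$-homogeneity, $D^2u\,x=0$ and $D^2u$ is homogeneous of degree $-1$.
\end{enumerate}
Consequently, at every interior point one eigenvalue is $0$, with eigenvector $x/|x|$. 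The remaining three eigenvalues $\lambda_1,\lambda_2,\lambda_3$ sum to zero. Moreover, $\pt_r D^2u=-D^2u/|x|$.

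\textbf{Interior inequality.} Write $w=F(D^2u)$, where $F(M)=\big(\sum_{\lambda_j>0}\lambda_j^2+4\sum_{\lambda_j<0}\lambda_j^2\big)^{1/2}$. The function $F$ is convex, $1$-homogeneous and orthogonally invariant on symmetric matrices. At points where the eigenvalues are simple and nonzero, $F$ is smooth there. Since $\Delta D^2u=0$, the chain rule gives
\[
\Delta w=D^2F(D^2u)[\nabla D^2u,\nabla D^2u].
\]
I would then diagonalize $D^2u$ at the point and expand the right-hand side using the standard formula for second derivatives of spectral functions. This produces diagonal terms $\sum \pt^2 F/\pt\lambda_i\pt\lambda_j\, u_{iik}u_{jjk}$ and off-diagonal terms with coefficients $(\pt_{\lambda_i}F-\pt_{\lambda_j}F)/(\lambda_i-\lambda_j)\ge 0$ multiplying $u_{ijk}^2$.

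Next, I would split the derivatives $u_{ijk}$ into radial and tangential parts. Homogeneity forces $u_{ij r}=-\lambda_i\delta_{ij}/|x|$. Together with $D^2u\,x=0$, this also determines the components $u_{r j k}$. Since $u_{ijk}$ is totally symmetric and trace-free in each pair of indices, a refined Kato-type argument becomes available. The radial pieces should generate the $\tfrac43 w^2/|x|^2$ term, and the trace-free symmetric structure should give the improvement $\tfrac23|\nabla w|^2$ in place of the trivial $w\Delta w\ge 0$.

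The factor $4$ in front of the negative eigenvalues is the tuning that makes a Cauchy--Schwarz step close in dimension $4$. Checking this algebraic inequality, a quadratic form in the $u_{ijk}$ that must be nonnegative under the constraints above, is the main obstacle. I would first attempt it by reducing to the eigenvalue sign patterns $(+,+,-)$ and $(+,-,-)$, up to ordering, and treating each case separately. Where eigenvalues coincide or vanish, $F$ is only Lipschitz. There I would argue in the viscosity sense: at such points, touch $w$ from above by smooth functions built from the smooth convex extensions of $F$ that lie below it, using the convexity of $F$ and the same computation. Alternatively, one can regularize $F$ and pass to the limit.

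\textbf{Boundary inequality.} On $\pt\{u>0\}\setminus\{0\}$, the outward normal $\nu$ is an eigenvector of $D^2u$ with eigenvalue $\pt_\nu\pt_\nu u=-H<0$, and the radial direction is tangent with eigenvalue $0$. Differentiating $u=0$ twice along the boundary shows that, with $|\nabla u|=1$, the tangential block of $D^2u$ equals the second fundamental form $\kappa_1,\kappa_2$ of the two-dimensional link, up to sign; hence $\kappa_1+\kappa_2=H$. To compute $\pt_\nu w$, I would first obtain $\pt_\nu D^2u$ by differentiating the identities $u=0$ and $|\nabla u|^2=1$ and using harmonicity, in the spirit of \cite[Subsection 2.1]{JS15}. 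The third derivatives in the normal direction are then expressed through $H$, $\kappa_i$ and their tangential derivatives. Substituting these into $\pt_\nu w=\sum_j(\pt_{\lambda_j}F)\,\pt_\nu\lambda_j$ should reduce the claim $\pt_\nu w\le 3Hw$ to an elementary inequality in $\kappa_1,\kappa_2$ and $H$, which I would check by cases on the signs of the $\kappa_i$.
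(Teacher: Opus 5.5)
Your proposal follows the route the paper indicates. The paper does not prove this lemma itself; it cites \cite[Section 4]{JS15} and summarizes the argument as you do: diagonalize $D^2u$ at a point, and use the convexity, symmetry and homogeneity of the spectral function together with harmonicity, $1$-homogeneity and the free boundary conditions. So the interior quadratic-form inequality you leave as the main obstacle is exactly the cited content, and it is not verified in the paper either.

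Your boundary reduction does close as you predict. Using $u_{\nu aa}=\kappa_a^2+H\kappa_a$, $u_{\nu\nu\nu}=-(\kappa_1^2+\kappa_2^2+H^2)$ and $u_{\nu rr}=0$, one gets $3Hw^2-w\,\pt_\nu w\geq 5H^3$ when $\kappa_1,\kappa_2\geq 0$. When $\kappa_2=-s<0$, one gets $3Hw^2-w\,\pt_\nu w=(s-H)^2(3s+5H)$, and its equality case $\kappa_1=2H$, $\kappa_2=-H$ is the one the paper uses later.
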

These inequalities can be proved by choosing a coordinate system at a point that diagonalizes $D^2u$ and using properties of convex, symmetric, homogeneous functions of the eigenvalues of $D^2 u$ alongside the equations \eqref{eq:PDE}.
See \cite[Section 4]{JS15} for details.
\par
We now define
\begin{equation*}
    \bar{v}=w^\frac{1}{3},
\end{equation*}
where $w$ is in \eqref{eq:dim_4_constr_w}.
Then $\bar{v}$ is $\Gamma$-symmetric and homogeneous of degree $\frac{-1}{3}$.
Using Lemma \ref{lem:ineq_w} above and comparing with Proposition \ref{prop:unstab_sym_homoge_var}, we have
\begin{itemize}
    \item On $\{u>0\}$,
\begin{equation}\label{eq:v_interior_ineq}
    \begin{split}
        \Delta \bar{v}&=w^\frac{-5}{3}(\frac{-2}{9}|\nabla w|^2+\frac{1}{3}w\Delta w)\\
        &\geq w^\frac{-5}{3}\left(\frac{-2}{9}|\nabla w|^2+\frac{1}{3}(\frac{2}{3}|\nabla w|^2+\frac{4}{3}\frac{w^2}{|x|^2})\right)
        =\frac{4}{9}\frac{\bar{v}}{|x|^2},
    \end{split}
\end{equation}
    and $\frac{4}{9}\geq (2-1-\frac{1}{3})^2$.

    \item On $\pt \{u>0\}\backslash\{0\}$,
    \begin{equation}\label{eq:v_bdy_ineq}
        \pt_\nu \bar{v}=\frac{1}{3}w^\frac{-2}{3}\pt_\nu w
        \leq H\bar{v}.
    \end{equation}
\end{itemize}
Thus $\bar{v}$ is a subsolution as described in Proposition \ref{prop:unstab_sym_homoge_var}.
To show that $\bar{v}$ is a strict subsolution, we fix $x_0\in \pt\{u>0\}\backslash\{0\}$ and consider the following two cases.
\begin{enumerate}
    \item If the strict inequality in \eqref{eq:v_bdy_ineq} holds at $x_0$, then by continuity we are done .

    \item Suppose the equality in \eqref{eq:v_bdy_ineq} holds at $x_0$.
    Then the equality in \eqref{eq:w_bdy_ineq} also holds.
    We choose a coordinate system near $x_0$ such that $e_4=-\nu$ at $x_0$, $e_1$ is radial, and $D^2 u(x_0)$ is diagonal with entries $\lambda_i$.
    A priori, $\lambda_1=0$ since $u$ is $1$-homogeneous, and $\lambda_4=-H<0$ as shown in \eqref{eq:u_concv_bdy_normal}.
    Using \cite[Line (4.9)]{JS15}, $\lambda_2, \lambda_3$ have the opposite signs; if we name them so that $\lambda_2>0$ and $\lambda_3<0$, then we have
    \begin{equation*}
        \lambda_2=2H,\, \lambda_3=-H=\lambda_4.
    \end{equation*}
    Plugging these back into the computation for \eqref{eq:w_interior_ineq}, we then have, as shown in the remark of \cite[Theorem 4.1]{JS15},
\begin{equation*}
        w\Delta w\geq \frac{2}{3}|\nabla w|^2+\frac{4}{3}\frac{w^2}{|x|^2}+\epsilon (\pt_{\nu}w)^2\quad \text{near $x_0$ in $\{u>0\}$},
    \end{equation*}
    for some $\epsilon>0$ depending on $H$. 
    This then results in an $\epsilon$-strict inequality for $\bar{v}$ in \eqref{eq:v_interior_ineq}.
\end{enumerate}
Since we have, assuming $u$ is nonlinear, successfully constructed a strict subsolution $\bar{v}$ in Proposition \ref{prop:unstab_sym_homoge_var}, we conclude that $u$ is unstable under $\Gamma$-symmetric variations.
This is a contradiction since $u$ is a minimizer against $\Gamma$-symmetric competitors.
Therefore, $u$ has to be a linear solution.
    \par
    Let us show that $\Gamma = \{\id_{S^3}\}$, following \cite[p.\@ 381]{CF24}.
    Since $\{u>0\}$ is a half space, $\pt \{u>0\}\cap S^3=S^2$.
    Since $u$ is $\Gamma$-symmetric, $\pt \{u>0\}\cap S^3$ is sent to itself by all elements of $\Gamma$.
    We claim that the two poles of $S^3$ with respect to  $\pt \{u>0\}\cap S^3$ (that is, the two points on $S^3$ at maximal distance from  $\pt \{u>0\}\cap S^3$) are swapped by every element of $\Gamma$ that is not the identity.
    \par
    Indeed, the poles cannot be fixed as the action of $\Gamma$ is free.
    Furthermore, the distance between each pole and  $\pt \{u>0\}\cap S^3$ must be preserved since all elements of $\Gamma$ are isometries.
    In particular, all the elements of $\Gamma$ which are not the identity swap the poles.
    Furthermore, since $\Gamma$ acts on $\mathbb{R}^4$ isometrically, for every $g \in \Gamma, g \neq \id_{S^3}$, there is a neighborhood $U\subset \mathbb{R}^4$ of one of the poles such that $U \subset \{u>0\}$ and $g \cdot U \subset \mathbb{R}^4\backslash \{u>0\}$.
    Consequently, any such element of $\Gamma$ cannot map the half space $\{u>0\}$ to itself.
    As $\{u>0\}$ is $\Gamma$-symmetric, we conclude that $\Gamma$ is trivial.
    \par
    Now that $\Gamma$ is trivial, by Corollary \ref{cor:global_min_linear} $\tilde{u}$ is a linear solution.
\end{proof}

As we mentioned in the introduction, it is conjecturally possible to prove Theorem \ref{thm:rigidity} for higher-dimensional spheres.
However, it fails for $S^{n-1}$ for $n\geq 7$ since we have the following example:
\begin{example}
    Let $u$ be the singular $1$-homogeneous global minimizer with $Q=1$ on $\mathbb{R}^7\cong C(S^6)$ from \cite{DJ09}, with $0\in \pt\{u>0\}$.
For any $n = 7+k$ with $k \in \mathbb{N}_0$, define $u'(x,y)$ on $\mathbb{R}^7\times \mathbb{R}^k$ by
\begin{equation*}
    u'(x,y)=u(x).
\end{equation*}
Then by Remark \ref{rmk:sec_min_min_corresp}, $u'$ is a $1$-homogeneous global minimizer on $\mathbb{R}^n\cong C(S^{n-1})$ and $0\in \{u'>0\}$, but $u'$ is not a linear solution.
\end{example}

We now prove Theorem \ref{thm:dim_bound} using Theorem \ref{thm:rigidity}.
For the reader's convenience, we restate the theorem here (also recall Definitions \ref{def:RCD_FBP_reg_sing} and \ref{def:strata_both_asp} about singular sets and strata):
\begin{theorem}[Theorem \ref{thm:dim_bound} restated]
    Let $X$ be an $n$-dimensional noncollapsed limit space with two-sided bounds on the Ricci curvature, as defined by \eqref{intro:2sided_Ric_lim}.
    Let $u$ be a minimizer of the one-phase problem \eqref{1-phase_Ber} on some bounded domain $\Omega\subset X$.
    Then we have $\mathcal{S}(u)=\mathcal{S}(u)^{n-5}$.
    In particular, we have
    \begin{equation*}
        \dim_{H}\mathcal{S}(u)\leq n-5.
    \end{equation*}
Furthermore, this dimension bound is sharp.
\end{theorem}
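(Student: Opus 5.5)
We argue by contradiction, exactly as in the classical dimension reduction scheme. The inclusion $\mathcal{S}^{n-5}(u)\subset\mathcal{S}(u)$ is the easy direction: if $x\in\mathcal{R}(u)$, its only blowup is $(\mathbb{R}^n,d_{\text{euc}},h,0)$, which is $(n-1)$-symmetric, so $x\notin\mathcal{S}^{n-5}(u)$. For the reverse inclusion, suppose $x\in\mathcal{S}(u)\setminus\mathcal{S}^{n-5}(u)$. Then some blowup $(Y,d_Y,u_x,y)\in\Tan(X,u,x)$ is $(n-4)$-symmetric, so $Y\cong\mathbb{R}^{n-4}\times C(Z)$ with $u_x$ invariant in the $\mathbb{R}^{n-4}$ factor. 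Since $Y$ is a tangent cone of $X$ at $x$, the point $x$ lies outside $\mathcal{S}^{n-5}(X)$.

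The first step is to identify $C(Z)$. Theorem \ref{thm:twosided_ric_n-4_rig} only asserts the existence of \emph{some} tangent cone of the form $\mathbb{R}^{n-4}\times C(S^3/\Gamma)$. To avoid this issue, I would rerun its proof for our specific $Y$. Since $Y$ splits $\mathbb{R}^{n-4}$, the cross-section $Z$ is $3$-dimensional, and by \cite{CN15, And90} it is a smooth Riemannian $3$-manifold with $C(Z)$ Ricci-flat off the tip. Hence $Z$ has constant curvature $1$, so $Z=S^3/\Gamma$ with $\Gamma$ acting freely. The alternative would be a Euclidean factor $\mathbb{R}^{n-3}$, but then Theorem \ref{thm:dim_bd_twosided_ricci} forces $Y=\mathbb{R}^n$, which falls under the same argument with $\Gamma$ trivial.

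The second step is to restrict to the cross-section. By Lemma \ref{lem:RCD_blowup_prop}, $u_x$ is a global minimizer on $Y$ with constant $Q(x)$ and $y\in\pt\{u_x>0\}$. By Lemma \ref{lem:restrict_min_is_min}, the section $v:=u_x|_{\{0\}\times C(S^3/\Gamma)}$ is a global minimizer on $C(S^3/\Gamma)$, and its tip lies in $\pt\{v>0\}$ because $u_x$ is invariant along $\mathbb{R}^{n-4}$. Theorem \ref{thm:rigidity} then gives $\Gamma$ trivial and $v$ linear, so $Y=\mathbb{R}^n$ and $u_x=Q(x)(z\cdot\nu)^+$ is the linear solution $h$. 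By the $\epsilon$-regularity remark after Definition \ref{def:RCD_FBP_reg_sing}, $x\in\mathcal{R}(u)$, which is a contradiction. The bound $\dim_H\mathcal{S}(u)\le n-5$ then follows from Lemma \ref{lem:dim_bd_strata_two_asp}.

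For sharpness, I would take $X=\mathbb{R}^{n-4}\times C(S^3/\mathbb{Z}_2)$ with $n\ge5$. This space is a noncollapsed limit of Eguchi--Hanson metrics, which are Ricci-flat, taken in the $\mathbb{R}^4$ factor. On it consider $u(t,\cdot)=t_+$, where $t$ is the first coordinate of $\mathbb{R}^{n-4}$. By Remark \ref{rmk:sec_min_min_corresp}, $u$ is a minimizer. Its free boundary $\{0\}\times\mathbb{R}^{n-5}\times C(S^3/\mathbb{Z}_2)$ contains the set $\{0\}\times\mathbb{R}^{n-5}\times\{p\}$. Every blowup at a point of this set lives on the nonflat tangent cone, so these points are singular, and they form a set of dimension $n-5$.

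The main obstacle is the identification step for the specific symmetric blowup cone. Theorem \ref{thm:twosided_ric_n-4_rig} as stated does not apply directly to that cone, so one has to verify that the Cheeger--Naber and Anderson structure results apply to it.
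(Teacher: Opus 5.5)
Your proposal is correct and follows the paper's proof essentially step by step. You pass to an $(n-4)$-symmetric blowup, identify the cone factor as $C(S^3/\Gamma)$ via the Cheeger--Naber/Anderson argument behind Theorem \ref{thm:twosided_ric_n-4_rig}, restrict to the section, apply Theorem \ref{thm:rigidity}, and conclude with Lemma \ref{lem:dim_bd_strata_two_asp}. Rerunning that theorem's proof for your specific cone and invoking $\epsilon$-regularity explicitly just make precise steps the paper takes silently, and your $\mathbb{R}^{n-4}\times C(S^3/\mathbb{Z}_2)$ example gives sharpness for every $n\ge 5$ rather than only $n=5$.

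The one slip is in that sharpness example. Remark \ref{rmk:sec_min_min_corresp} lets you strip off the $\mathbb{R}^{n-5}$ factors, but it does not by itself show that $t_+$ is minimizing across the $C(S^3/\mathbb{Z}_2)$ factor. That needs a direct check, for instance the calibration $|\nabla v|^2+1_{\{v>0\}}\ge 2\,\partial_t v$ together with Fubini in $t$. The paper also only asserts this step (``it can be checked'').
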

\begin{proof}[Proof of Theorem \ref{thm:dim_bound}]
We show $\mathcal{S}(u)\backslash \mathcal{S}(u)^{n-5}=\emptyset$ by contradiction.
Suppose $x \in \mathcal{S}(u)\backslash \mathcal{S}(u)^{n-5}$.
Then by definition, there exists a pointed metric cone $(C(Z), p)$ such that
\begin{equation*}
    (\mathbb{R}^{n-4}\times C(Z), u_x, (0^{n-4},p)) \in \Tan(X,u, x),
\end{equation*}
where $u_x$ is a $\mathbb{R}^{n-4}$-invariant global minimizer on $\mathbb{R}^{n-4}\times C(Z)$ with $(0^{n-4}, p)\in \pt\{u_x>0\}$.
By Lemma \ref{lem:restrict_min_is_min}, $\hat{u}:=u_x|_{\{0^{n-4}\}\times C(Z)}$ is a global minimizer on $C(Z)$ with $p\in \pt\{\hat{u}>0\}$.
On the other hand, by Theorem \ref{thm:twosided_ric_n-4_rig}, we have $C(Z)\cong C(S^3/ \Gamma)$, where $\Gamma$ is a discrete group of isometries on $S^3$ acting freely.
Hence by Theorem \ref{thm:rigidity}, $\Gamma$ is trivial, $\hat{u}$ is a linear solution, and thus so is $u_x$.
This contradicts that $x\in \mathcal{S}(u)$.
\par
By \eqref{eq:dual_sing_equal_n-2_strata} and the inclusion of strata in \eqref{eq:dual_strata_inclu}, we conclude $\mathcal{S}(u)=\mathcal{S}(u)^{n-5}$, and hence $\dim_H (\mathcal{S}(u))\leq n-5$ by Lemma \ref{lem:dim_bd_strata_two_asp}.
The sharpness of this dimension bound follows from Example \ref{ex:dim_bound_sharp} below.
\end{proof}

\begin{example}\label{ex:dim_bound_sharp}
We show that there exists a noncollapsed limit of manifolds with two-sided bounds on the Ricci curvature with a global minimizer $u$ on it such that $\mathcal{S}(u)^{n-5}$ is nonempty.
\par
   The metric cone $C(\mathbb{RP}^3)$, arising as the blow-down of the Eguchi-Hanson manifold (see \cite[Example 2.15]{CN15}; the Eguchi-Hanson metric was originally defined in \cite{Cal79, EH79}), is a noncollapsed limit of manifolds with two-sided bounds on the Ricci curvature (Ricci-flat, actually).
   Clearly, it is singular at the tip $p$.
   It can be checked that $u(x,t)$ defined on $C(\mathbb{RP}^3)\times \mathbb{R}$ by
   \begin{equation*}
       u(x,t)=t_+
   \end{equation*}
   is a global minimizer of the one-phase problem with $Q=1$, and $p\times \{0\}$ belongs to $\mathcal{S}(u)^{n-5}$.
\end{example}

As mentioned in Remark \ref{rmk:FB_reg_in_R(X)}, the regularity of the free boundary can be improved if we consider its restriction to the regular set $\mathcal{R}(X)$.
The following proposition is similar to \cite[Proposition 3.12]{CF24}, and its proof follows by transforming the problem to the Euclidean space via \cite{And90,CC96}, and applying \cite{FS07} about free boundary problems on $\mathbb{R}^n$ with variable coefficients.
\begin{proposition}\label{prop:FB_reg_in_R(X)}
    Let $X$ be a noncollapsed limit of manifolds with two-sided bounds on the Ricci curvature of dimension $n$.
    Let $u$ be a minimizer of the one-phase problem on some domain $\Omega\subset X$.
    Then $\pt\{u>0\}\cap \Omega\cap\mathcal{R}(X)$ is a $C^{1,\gamma}$ hypersurface for some $\gamma$, outside a closed set of Hausdorff dimension at most $n - k^*$, where $k^*$ is the critical dimension.
\end{proposition}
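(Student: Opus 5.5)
The plan is to localize to the regular part of the ambient space and then invoke the Euclidean variable-coefficient theory. Fix $x_0\in \pt\{u>0\}\cap\Omega\cap\mathcal{R}(X)$. By Theorem \ref{thm:twosided_ric_n-4_rig}'s companion, Theorem \ref{thm:twosided_ricci_R(X)_holder}, $\mathcal{R}(X)$ is open and carries a $C^{1,\beta}$ Riemannian structure for every $\beta\in(0,1)$. Concretely, by \cite{And90, CC96} there is a harmonic coordinate chart $\Phi: B_r(x_0)\to U\subset\mathbb{R}^n$ in which the metric coefficients $g_{ij}$ are $C^{0,\beta}$, and in fact $C^{1,\beta}$ after the standard harmonic-coordinate estimates. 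This uses $W^{2,p}$ bounds, which follow from the Ricci bound on the approximating manifolds and pass to the limit. In this chart $\mathcal{H}^n=\sqrt{\det g}\,dx$ and $|\nabla v|^2=g^{ij}\pt_iv\pt_jv$. Hence $u\circ\Phi^{-1}$ minimizes
\[
\int_U g^{ij}\pt_i v\pt_j v\sqrt{\det g}\,dx+\int_U Q^2\sqrt{\det g}\,1_{v>0}\,dx ,
\]
that is, an Alt--Caffarelli functional with Hölder continuous uniformly elliptic coefficients $a^{ij}=g^{ij}\sqrt{\det g}$ and Hölder weight $q=Q^2\sqrt{\det g}$. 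I will check that the Sobolev space on the ball in $X$ agrees with $W^{1,2}(U)$ under $\Phi$, so that competitors correspond bijectively. This holds because $\Phi$ is bi-Lipschitz and the Cheeger energy on the smooth part agrees with the Riemannian one.

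Next I apply \cite{FS07}, together with the Weiss-type monotonicity for variable coefficients. A freeze-the-coefficients argument, in which a linear change of variables makes $a^{ij}(x_0)=\delta_{ij}$, shows that blowups at any free boundary point are $1$-homogeneous global minimizers on $\mathbb{R}^n$ with constant $Q(x_0)$. The flatness-implies-$C^{1,\gamma}$ result then yields that the free boundary is a $C^{1,\gamma}$ hypersurface near any point admitting a linear blowup. Dimension reduction in the Weiss style gives that the remaining singular set has Hausdorff dimension at most $n-k^*$. That set is closed by upper semicontinuity of the Weiss density, or equivalently by openness of the regular part via $\epsilon$-regularity. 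Since $\Phi$ is $C^{1,\beta}$ (in particular bi-Lipschitz), both the $C^{1,\gamma}$ regularity, with $\gamma$ possibly reduced to $\min(\gamma,\beta)$, and the Hausdorff dimension transfer back to $X$.

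Finally I globalize. Cover $\mathcal{R}(X)\cap\Omega$ by countably many such charts. The singular pieces are locally closed in $\mathcal{R}(X)\cap\Omega$, and their union is relatively closed there because each is defined intrinsically as the set of points without a linear blowup. Countable stability of Hausdorff dimension then gives the bound $n-k^*$.

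The main obstacle is the regularity of the chart. I need enough regularity of $g$ (at least Hölder, and for the Weiss monotonicity in \cite{FS07} possibly Lipschitz or $C^{0,\alpha}$ with a Dini-type condition) so that the variable-coefficient theory applies. My first attempt is to use the $C^{1,\beta}$ harmonic coordinates of \cite{And90}, which give $g\in C^{1,\beta}$ in the limit by elliptic regularity of $\Delta g_{ij}=-2\mathrm{Ric}_{ij}+Q(g,\pt g)$ with bounded Ricci. If only $C^{0,\beta}$ is available, I would instead rely on the $\epsilon$-regularity and dimension reduction of \cite{FS07} for Hölder coefficients, which do not require monotonicity in full strength.
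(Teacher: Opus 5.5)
Your proposal is correct and follows essentially the paper's route: identify a neighborhood of a point of $\mathcal{R}(X)$ with a Euclidean ball carrying a metric $C^1$-close to $\delta_{ij}$ via \cite{And90, CC96}, and view $u$ as a minimizer of a variable-coefficient Alt--Caffarelli functional. Then invoke \cite{FS07} near regular free boundary points, with the $n-k^*$ bound coming from the Weiss-style dimension reduction on the $C^{1,\beta}$ manifold $\mathcal{R}(X)$. Your version is slightly more careful than the paper's: you carry the density $\sqrt{\det g}$ in both terms of the functional, which the paper's displayed functional omits, and you spell out the frozen-coefficient blowups, the closedness of the exceptional set, and the countable globalization that the paper leaves implicit.
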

\begin{proof}
    By Theorem \ref{thm:twosided_ricci_R(X)_holder}, $\mathcal{R}(X)$ is a $C^{1,\gamma}$ open Riemannian manifold.
    Hence by the same dimension reduction argument behind \eqref{eq:dim_bd_euc_1-phase_sing}, we have
    \begin{equation*}
        \dim_H(\mathcal{S}(u)\cap \mathcal{R}(X))\leq n-k^*.
    \end{equation*}
    Let's fix a point $x_0 \in \mathcal{R}(u)=\mathcal{R}(u)\cap\mathcal{R}(X)$ and show that $\pt\{u>0\}$ is a $C^{1,\gamma}$ hypersurface in a neighborhood of $x_0$.
    \par
    Fix some small $\epsilon>0$.
    By \cite{And90, CC96}, we can identify a neighborhood of $x_0$ with the Euclidean ball $B_2(0^n) \subset \mathbb{R}^n$ equipped with a Riemannian metric $g$ such that
\begin{equation*}
    \|g_{ij}-\delta_{ij}\|_{C^1(B_2(0))}<\epsilon.
\end{equation*}
With this identification, we can view $u$ as a minimizer of the one-phase problem on $B_2(0^n)$ and $0^n=x_0\in \mathcal{R}(u)$.
More precisely, $u$ is a minimizer of the functional with variable coefficients:
\begin{equation*}
    \int_{B_2(0)} g^{ij}(x)\pt_{x_i}u(x)\pt_{x_j}u(x) +Q(x)1_{u>0}(x)\dd x,
\end{equation*}
with its boundary data on $\pt B_2(0)$.
We can now apply \cite{FS07} to conclude that $\pt\{u>0\}\cap B_1(0)$ is a $C^{1,\gamma}$ graph for some $\gamma$.
\end{proof}

\appendix
\section{Appendix: Dimension Bound on Singular Strata}\label{apndx:dim_bound_strata}

In this appendix, we prove Lemma \ref{lem:dim_bd_strata_two_asp} by adapting the modified dimension-reduction arguments in \cite[Theorem 4.5]{FMS25}.
A key ingredient is the Weiss-type monotonicity formula for minimizers on $\mathsf{RCD}$ cones, which we quote here:
\begin{theorem}[{\cite[Lemma A.3]{CZZ22}}]\label{thm:rcd_cone_weiss}
    Let $u$ be a global minimizer of the one-phase problem \eqref{1-phase_Ber} on a noncollapsed $\mathsf{RCD}(0,n)$ cone $C(Z)\cong ([0,\infty)\times Z)/(\{0\}\times Z)$, with $n\geq 2$, $Q\equiv 1$, and $p$ the cone tip.
   For $r>0$, define
    \begin{equation*}
        W(r):=r^{-n}\int_{B_r(p)}(|\nabla u|^2 +1_{u>0})\dd {\mathcal{H}^n} 
        -r^{-n-1}\int_{C(Z)}u^2 \dd{|D1_{B_r(p)}|},
    \end{equation*}
    where $|D1_{B_r(p)}|$ is the total variation of $1_{B_r(p)}$.
    Then $W$ is locally Lipschitz and increasing.
    At those $r$ where $W$ is differentiable, we have
    \begin{equation*}
        W'(r)\geq r^{-1}\int_{Z} \left(\nabla u(r,z)\cdot \nabla d_{C(Z)}(p, \cdot)-\frac{u(r,z)}{r} \right)^2\dd{\mathcal{H}^{n-1}(z)}.
    \end{equation*}
    In particular, $W$ is constant if and only if $u$ is $1$-homogeneous with respect to $p$.
\end{theorem}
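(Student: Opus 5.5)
This statement is quoted from \cite[Lemma A.3]{CZZ22}, so here is how I would prove it directly. The method follows Weiss' computation in Theorem \ref{thm:orginal_weiss}, run on the cone $C(Z)$ in polar coordinates $(r,z)$. Two facts about noncollapsed $\mathsf{RCD}(0,n)$ cones make this work. The measure disintegrates as $\dd\mathcal{H}^n = r^{n-1}\dd r\,\dd\mathcal{H}^{n-1}(z)$. The perimeter measure of $B_r(p)$ is $r^{n-1}\mathcal{H}^{n-1}\llcorner Z$ transported to $\{r\}\times Z$.

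With these, I would rewrite
\begin{equation*}
W(r)=r^{-n}\int_0^r\!\int_Z (|\nabla u|^2+1_{u>0})\,s^{n-1}\dd\mathcal{H}^{n-1}\dd s-r^{-2}\int_Z u(r,z)^2\dd\mathcal{H}^{n-1}(z).
\end{equation*}
The first term is locally Lipschitz in $r$ because $u$ is Lipschitz (Theorem \ref{thm:RCD_FBP_fund}) and the integrand is bounded. For the boundary term, $r\mapsto u(r,z)$ is Lipschitz uniformly in $z$, since radial curves are geodesics. Differentiating gives
\begin{equation*}
\tfrac{d}{dr}\int_Z u^2\dd\mathcal{H}^{n-1}=2\int_Z u\,\partial_r u,\qquad \partial_r u=\nabla u\cdot\nabla d(p,\cdot).
\end{equation*}
So $W$ is locally Lipschitz, and at a.e.\ $r$ its derivative is
\begin{equation*}
W'(r)=-\tfrac{n}{r}(\dots)+r^{-1}\int_{Z}(|\nabla u|^2+1_{u>0})(r,z)-\tfrac{d}{dr}\Big(r^{-2}\int_Z u^2\Big).
\end{equation*}

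The key step is to compare $u$ with its $1$-homogeneous extension. Fix $r$ and define $v(s,z)=\frac{s}{r}u(r,z)$ on $B_r(p)$ and $v=u$ outside. Since $v$ agrees with $u$ off $B_r(p)$, minimality gives $E(u,B_r)\le E(v,B_r)$. Because $C(Z)$ is a cone, $|\nabla v|^2(s,z)=r^{-2}u(r,z)^2+|\nabla_Z u(r,z)|^2/r^2$, using the splitting $|\nabla u|^2=(\partial_r u)^2+r^{-2}|\nabla_Z u|^2$ on the cone. Integrating $s^{n-1}\dd s$ then yields
\begin{equation*}
E(v,B_r)=\tfrac{r}{n}\int_Z\big(u^2/r^2+|\nabla u|^2-(\partial_ru)^2+1_{u>0}\big)(r,z)\,r^{n-1}.
\end{equation*}
Substituting this bound for $nr^{-n-1}E(u,B_r)$ into $W'$ and completing the square leaves
\begin{equation*}
W'(r)\ge r^{-1}\int_Z(\partial_r u-u/r)^2.
\end{equation*}
Here I write $r^{-1}\int_Z$ up to the $r^{n-1}$ normalizations. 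This is exactly the stated inequality, and it implies that $W$ is increasing.

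For the rigidity, suppose $W$ is constant. Then $\partial_r u=u/r$ a.e., so along every ray $u(s,z)/s$ is constant, i.e.\ $u$ is $1$-homogeneous. Conversely, if $u$ is $1$-homogeneous, the scaling $u_\lambda(x)=u(\lambda x)/\lambda$ equals $u$ and satisfies $W_u(\lambda r)=W_{u_\lambda}(r)$, so $W$ is constant.

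The main obstacle is justifying the calculus on a nonsmooth cone. Three things need care: the orthogonal decomposition of $|\nabla v|^2$ into radial and spherical parts, which I would take from the warped-product structure of $W^{1,2}$ on $\mathsf{RCD}$ cones in the Ketterer/Gigli sense; the identification of $|D1_{B_r}|$ with the slice measure; and the fact that $v\in W^{1,2}_{\rm loc}$ for a.e.\ $r$, which requires $u(r,\cdot)\in W^{1,2}(Z)$ for a.e.\ $r$ (Fubini-type slicing). I would handle all three by working at a.e.\ $r$ and approximating by Lipschitz functions.
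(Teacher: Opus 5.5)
Your proof is correct. The paper gives no proof of this statement and simply imports it from \cite[Lemma A.3]{CZZ22}; your argument is the standard route to it. You test minimality on $B_r(p)$ against the $1$-homogeneous extension $\frac{s}{r}u(r,z)$ in polar coordinates, which is what yields an inequality for $W'$ rather than Weiss's identity. Two points to keep straight: the warped-product splitting $|\nabla u|^2=(\partial_r u)^2+r^{-2}|\nabla_Z u(r,\cdot)|^2$ must hold as an identity for a.e.\ $r$, since you need both an upper bound for $|\nabla v|$ and a lower bound for $|\nabla u|$ on the slice. Also, you get the inequality for a.e.\ $r$ rather than at every point where $W$ is differentiable, but that is all that monotonicity and the rigidity argument along rays require.
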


\begin{proof}[Proof of Lemma \ref{lem:dim_bd_strata_two_asp}]
    The proof is divided into four steps.
    In the first step, we set up a contradiction argument and reduce to the case of global minimizers on $\mathsf{RCD}$ cones.
    In the second step, we make a further reduction to the case where the minimizer is $1$-homogeneous, using Theorem \ref{thm:rcd_cone_weiss}. 
    In step three, via additional blow-up arguments, we gain a splitting for both the ambient space and the minimizer, resulting in a dimension reduction.
    Finally in step four, the contradiction argument is completed by induction on the ordinal of the stratum.
    A key subtlety, in contrast to classical situations, is that the monotonicity formula here only holds for minimizers centered at the tip of metric measure cones, necessitating additional blow-ups.
    For simplicity, we assume that $Q\equiv 1$ in the one-phase problem.
    \par
    \textbf{Step 1.} Suppose that the statement does not hold for some $k\in \mathbb{N}_0$.
    Then there exists $k'\in \mathbb{R}$ such that $k'>k$ and
    \begin{equation}\label{eq:contradic_pos_k'_Haus}
        \mathcal{H}^{k'}(\mathcal{S}^k(u))>0.
    \end{equation}
    For $\epsilon>0$, consider the quantitative $(k,\epsilon)$-stratum
    \begin{equation}
        \mathcal{S}^k_\epsilon(u):=\{x\in \pt\{u>0\}\cap \Omega \mid \text{for all $r\in (0,1)$, $B_r(x)$ is not $(k+1,\epsilon)$-symmetric}\},
    \end{equation}
    where we say that $B_r(x)$ is $(k,\epsilon)$-symmetric if     \begin{enumerate}[topsep=0pt, ]
        \item There is a noncollapsed $\mathsf{RCD}(0,n)$ cone $(Y,p)\cong(\mathbb{R}^k\times C(Z),(0^k,z_*))$ such that  
        \begin{equation}
            d_{GH}(B_r(x), B^Y_r(p)) < \epsilon r,
        \end{equation}
where $p$ is the cone tip;
        \item There exist $\tilde{u}\in W^{1,2}_{\text{loc}}(Y)$ that is $\mathbb{R}^k$-invariant (under the identification above) and a Borel $2\epsilon r$-pGH equivalence $F:B^Y_r(p)\to B_r(x)$, such that
        \begin{equation}
            r^{-2-n}\int_{B^Y_r(p)} |u\circ F-\tilde{u}|^2 \dd{\mathcal{H}^n}< \epsilon.
        \end{equation}
    \end{enumerate}
    One can check that $\mathcal{S}^k_{\epsilon_1}(u)\subset \mathcal{S}^k_{\epsilon_2}(u)$ for $\epsilon_1\geq \epsilon_2>0$, and that $\mathcal{S}^k(u)=\cup_{\epsilon>0}\mathcal{S}^k_\epsilon(u)$ (see \cite{BW26}).
    Then \eqref{eq:contradic_pos_k'_Haus} implies that
    \begin{equation*}
        \mathcal{H}^{k'}(\mathcal{S}^k_{\bar{\epsilon}}(u))>0
    \end{equation*}
    for some $\bar{\epsilon}>0$.
    By \cite[2.10.19]{federer_geometric_1996}, there exists $x\in\mathcal{S}^k_{\bar{\epsilon}}(u)$ such that
    \begin{equation*}
        \limsup_{r\to 0} \frac{\mathcal{H}^{k'}_\infty(\mathcal{S}^k_{\bar{\epsilon}}(u)\cap B_r(x))}{r^{k'}}\geq 2^{-k'},
    \end{equation*}
where $\mathcal{H}^{k'}_\infty$ is the $k'$-dimensional $\infty$-Hausdorff premeasure.
Now, we choose a sequence $r_j\to 0$ such that
\begin{enumerate}[topsep=0pt, ]
    \item $(X, d/r_j, u_j:=u/r_j,x)$ converge to $(Y,d_Y,u_x, y)\in \Tan(X,u,x)$ in the sense of definition \ref{def:tang_func}. By Remark \ref{rmk:rcd_tang_cone} and Lemma \ref{lem:RCD_blowup_prop}, $u_x$ is a global minimizer on the $\mathsf{RCD}(0,n)$ cone $Y\cong C(Z)$.

    \item $\mathcal{S}^k_{\bar{\epsilon}}(u_j)\cap B^j_1(x)\subset (X,d/r_j)$ GH-converge to some compact set $A\subset B^{C(Z)}_1(y)$, via Blaschke’s theorem (cf.\@ \cite[7.3.8]{BBI01}).
    
    \item $\limsup_{j\to \infty} \frac{\mathcal{H}^{k'}_\infty(\mathcal{S}^k_{\bar{\epsilon}}(u)\cap B_{r_j}(x))}{r_j^{k'}}\geq 2^{-k'}$.
\end{enumerate}
It is straightforward to check that $A \subset \mathcal{S}^k_{\bar{\epsilon}}(u_x)$.
Therefore, we obtain
\begin{equation*}
\begin{split}
        \mathcal{H}^{k'}_\infty(\mathcal{S}^k_{\bar{\epsilon}}(u_x)\cap B^{C(Z)}_1(y))
    &\geq \mathcal{H}^{k'}_\infty(A)
    \geq \limsup_{j\to \infty} \mathcal{H}^{k'}_\infty(\mathcal{S}^k_{\bar{\epsilon}}(u_j)\cap B^j_1(x))\\
    &=\limsup_{j\to \infty} \frac{\mathcal{H}^{k'}_\infty(\mathcal{S}^k_{\bar{\epsilon}}(u)\cap B_{r_j}(x))}{r_j^{k'}}\geq 2^{-k'},
\end{split}
\end{equation*}
    where we use the classical upper semicontinuity of the Hausdorff premeasure with respect to Hausdorff convergence (in some convergence-realizing space).
    Hence we have
    \begin{equation}\label{eq:k'_Hausdorff_measure_1st_blowup}
        \mathcal{H}^{k'}(\mathcal{S}^k_{\bar{\epsilon}}(u_x)\cap B^{C(Z)}_1(y))>0.
    \end{equation}
    \par
\textbf{Step 2.}
In this step, by another blow-up and Theorem \ref{thm:rcd_cone_weiss}, we further reduce to the case of $1$-homogeneous global minimizers.
For the sake of clarity, we define a \emph{vertex} of $C(Z)$ as a point $p\in C(Z)$ such that $C(Z)$ isometrically admits a metric cone structure with $p$ as the tip.
For example, $\mathbb{R}^n$ has all its points as vertices.
We remark that the set of vertices of $C(Z)$ is isometric to $\mathbb{R}^l$ for some $0\leq l\leq n$.
\par
We claim that there is a point $O\in \mathcal{S}^k_{\bar{\epsilon}}(u_x)$ such that $O$ is a vertex of $C(Z)$ and the following density estimate holds:
    \begin{equation*}
        \limsup_{r\to 0} \frac{\mathcal{H}^{k'}_\infty(\mathcal{S}^k_{\bar{\epsilon}}(u_x)\cap B_r(O))}{r^{k'}}\geq 2^{-k'}.
    \end{equation*}
Indeed, if the claim does not hold, then by \eqref{eq:k'_Hausdorff_measure_1st_blowup} there are points of (upper) density for $\mathcal{H}^{k'}_\infty\llcorner \mathcal{S}^k_{\bar{\epsilon}}(u_x)$ and none of them is a vertex of $C(Z)$.
Hence, we can repeat the argument in step 1, blowing up at such a non-vertex density point in the ambient cone.
In this way, the dimension of the set of vertices of the ambient space increases at least by one.
The procedure can be iterated until one of the following two possibilities occurs: Either the ambient space is isometric to $\mathbb{R}^n$, in which case \eqref{eq:k'_Hausdorff_measure_1st_blowup} contradicts the classical regularity theory (as in \eqref{eq:dim_bd_euc_1-phase_sing}), or there is a density point (of some global minimizer) that is also a vertex.
\par
Let $O$ denote any such density vertex of $C(Z)$.
We perform another blow-up at $O$ and obtain a global minimizer $v$ on $(C(Z),O)$.
By Theorem \ref{thm:rcd_cone_weiss}, $v$ is $1$-homogeneous.
In addition, by repeating the arguments in step 1, taking into account that $O$ was chosen to be a density point, it holds
    \begin{equation}\label{eq:k'_Hausdorff_measure_2nd_blowup}
        \mathcal{H}^{k'}(\mathcal{S}^k_{\bar{\epsilon}}(v)\cap B^{C(Z)}_1(O))>0.
    \end{equation}
\par
\textbf{Step 3.} The goal of this step is to gain a splitting for both the ambient space and the minimizer, by considering a blowup of $v$ at a density point for $\mathcal{H}^{k'}_\infty\llcorner \mathcal{S}^k_{\bar{\epsilon}}(v)$ that is not the tip $O$.
Our setup is that $v$ is a $1$-homogeneous global minimizer on the metric cone $(C(Z),O)$ with the tip $O$, and we have $\mathcal{H}^{k'}(\mathcal{S}^k_{\bar{\epsilon}}(v)\cap B^{C(Z)}_1(O))>0$.
By the very same arguments as in step 1, there exist a point $P\in \mathcal{S}^k_{\bar{\epsilon}}(v)\backslash\{O\}$ and a sequence $r_j\to 0$ such that $(C(Z), d_j:=d_{C(Z)}/r_j, v_j:=v/r_j,P)$ converge to $(C(Z'),d_{C(Z')},w, O')$, where $w$ is a global minimizer on the $\mathsf{RCD}(0,n)$ cone $C(Z')$; moreover, we have
    \begin{equation}\label{eq:k'_Hausdorff_measure_3rd_blowup}
        \mathcal{H}^{k'}(\mathcal{S}^k_{\bar{\epsilon}}(w)\cap B^{C(Z')}_1(O'))>0.
    \end{equation}
\par
Our goal now is to show that $C(Z')$ splits off a line isometrically, and $w$ is invariant along this line.
Let $C_j$ denote the rescaled pointed space  $(C(Z), d_j, P)$.
Consider the sequence of functions $f_j:C_j\to \mathbb{R}$ defined by
\begin{equation}
    f_j(\xi)=r_j\left(d_j^2(O,\xi)-d_j^2(O,P)\right)
\end{equation}
By \cite[Lemma 4.9]{FMS25}, the functions $f_j$ converge to some splitting function $g:C(Z')\to \mathbb{R}$ in $H^{1,2}_{\text{loc}}$; see \cite{AH17} for the relevant background.
Since $v$ is $1$-homogeneous, we have
\begin{equation*}
    \frac{1}{2}\nabla^{d_{C(Z)}} v\cdot \nabla^{d_{C(Z)}} d^2_{C(Z)}(O, \cdot)=v
\end{equation*}
$\mathcal{H}^n$-a.e.\@ on $C(Z)$.
Using the definition of $f_j$ and $v_j$, we then get
\begin{equation*}
        \frac{1}{2}\nabla^{d_j} v_j\cdot \nabla^{d_j}  f_j=r_jv_j
\end{equation*}
$\mathcal{H}^n$-a.e.\@ on $C_j$.
By \cite[Theorem 7.1]{CZZ22}, $v_j\to w$ strongly in $H^{1,2}_{\text{loc}}$.
Hence by \cite[Theorem 5.7]{AH17}, $\nabla^{d_j} v_j\cdot \nabla^{d_j}f_j$ converges to $\nabla^{d_{C(Z')}}w \cdot \nabla^{d_{C(Z')}}g$ in $L^1_\text{loc}$.
Together with $r_j\to 0$, we then have
\begin{equation*}
    \nabla^{d_{C(Z')}}w \cdot \nabla^{d_{C(Z')}}g=0
\end{equation*}
$\mathcal{H}^n$-a.e.\@ on $C(Z')$.
Since $w$ itself is locally Lipschitz continuous (Theorem \ref{thm:RCD_FBP_fund}), it follows that $w$ is invariant in the splitting induced by $g$.
This concludes the splitting for both $C(Z')$ and $w$.
\par
\textbf{Step 4.} 
In this final step, we perform induction on the ordinal $k$ of the stratum.
If $k=0$, then we have a contradiction: $P\in \mathcal{S}^0_{\bar{\epsilon}}(v)\subset \mathcal{S}^0(v)$, but $(C(Z'),d_{C(Z')},w, O')\in \Tan(C(Z),v,P)$ splits off a line.
\par
Suppose $k>0$.
We write $C(Z')\cong \mathbb{R}\times C(Z'')$, where $\mathbb{R}$ is the splitting induced by $g$.
Then by Lemma \ref{lem:restrict_min_is_min}, the section $\tilde{w}:=w|_{\{0\}\times C(Z'')}$
is a global minimizer on $C(Z'')$.
Since $w$ is $\mathbb{R}$-invariant, we see that $(t,y)\in \mathcal{S}^k_{\bar{\epsilon}}(w)\subset \mathbb{R}\times C(Z'')$ if and only if $y\in \mathcal{S}^{k-1}_{\bar{\epsilon}}(\tilde{w})$.
In particular, from \eqref{eq:k'_Hausdorff_measure_3rd_blowup} we get
    \begin{equation*}
        \mathcal{H}^{k'-1}(\mathcal{S}^{k-1}_{\bar{\epsilon}}(\tilde{w})\cap B^{C(Z'')}_1(O''))>0,
    \end{equation*}
where $O''$ is the tip of $C(Z'')$. 
To summarize, we prove that if $u$ is a minimizer of the one-phase problem in a noncollapsed $\mathsf{RCD}(K,n)$ space such that for some $k\in \mathbb{N}_0$ it holds $\dim_H \mathcal{S}^k(u)>k$, then there exists a minimizer $u'$ on a noncollapsed $\mathsf{RCD}(K,n-1)$ space such that $\dim_H \mathcal{S}^{k-1}(u')>k-1$.
This dimension reduction can be iterated a finite number of
times until we reach the case $k = 0$, which we have already discussed above.
\end{proof}

\bibliographystyle{alpha}
\bibliography{ref.bib}

\end{document}